\documentclass[reqno,a4paper,12pt]{article}
\usepackage{amsmath,amsthm,amssymb}
\usepackage[T1]{fontenc}
\usepackage{enumerate}
\usepackage{bbm}
\usepackage{bm}
\usepackage{mathtools}
\usepackage{mathrsfs}
\usepackage{nicefrac}
\usepackage{accents}
\usepackage[pagewise]{lineno}
\usepackage{paracol}
\usepackage[a4paper]{geometry}
\geometry{
 a4paper,
 total={170mm,257mm},
 left=20mm,
 top=20mm,
 }
\usepackage{tikz}
\tikzset{every picture/.style={line width=0.75pt}} 
\usepackage[sort&compress,numbers]{natbib}
\usepackage[hypertexnames=false, hidelinks, bookmarksnumbered=true]{hyperref}    
\usepackage{cleveref}  

\DeclareMathOperator{\lcm}{lcm}
\DeclareMathOperator{\supp}{supp}

\newtheorem{theorem}{Theorem}[section]
\newtheorem{lemma}[theorem]{Lemma}

\newtheorem{proposition}[theorem]{Proposition}
\newtheorem{Thx}{Theorem}

\newtheorem{Corx}[Thx]{Corollary}

\newtheorem{Propx}[Thx]{Proposition}

\theoremstyle{definition}
\newtheorem{definition}[theorem]{Definition}

\newtheorem{remark}[theorem]{Remark}
\newtheorem{example}[theorem]{Example}
\newtheorem{conjecture}{Conjecture}

\newcommand{\divides}{\mid}

\newcommand{\Per}{\operatorname{Per}\nolimits}

\newcommand{\cM}{{\cal M}}

\newcommand{\Z}{{\mathbb{Z}}}
\newcommand{\N}{{\mathbb{N}}}

\newcommand{\vep}{\varepsilon}

\newcommand{\sB}{{\mathscr{B}}}

\makeatletter
\def\moverlay{\mathpalette\mov@rlay}
\def\mov@rlay#1#2{\leavevmode\vtop{%
   \baselineskip\z@skip \lineskiplimit-\maxdimen
   \ialign{\hfil$\m@th#1##$\hfil\cr#2\crcr}}}
\newcommand{\charfusion}[3][\mathord]{
    #1{\ifx#1\mathop\vphantom{#2}\fi
        \mathpalette\mov@rlay{#2\cr#3}
      }
    \ifx#1\mathop\expandafter\displaylimits\fi}
\makeatother

\makeatletter
\newcommand{\subjclass}[2][2020]{%
  \let\@oldtitle\@title%
  \gdef\@title{\@oldtitle\footnotetext{#1 \emph{Mathematics subject classification.} #2}}%
}
\newcommand{\keywords}[1]{%
  \let\@@oldtitle\@title%
  \gdef\@title{\@@oldtitle\footnotetext{\emph{Key words and phrases.} #1.}}%
}
\makeatother

\providecommand{\noopsort}[1]{}

\makeatother
\begin{document}
\author{Aurelia Dymek \and Joanna Ku\l{}aga-Przymus \and Daniel Sell}
\title{Invariant measures for $\mathscr{B}$-free systems revisited}
\author{Aurelia Dymek \and Joanna Ku\l{}aga-Przymus \and Daniel Sell}
\title{Invariant measures for $\mathscr{B}$-free systems revisited}
\date{January 10, 2024}
\subjclass{28C10, 37A05, 37A44, 37B40, 37B10.}
\keywords{Invariant measures, $\mathscr{B}$-free dynamics, dynamical diagrams, topological entropy, intristic ergodicity, entropy density, tautness}
\maketitle

\begin{abstract}
\noindent
For $ \mathscr{B} \subseteq \mathbb{N} $, the $ \mathscr{B} $-free subshift $ X_{\eta} $ is the orbit closure of the characteristic function of the set of $ \mathscr{B} $-free integers. We show that many results about invariant measures and entropy, previously only known for the hereditary closure of $ X_{\eta} $, have their analogues for $ X_{\eta} $ as well. In particular, we settle in the affirmative a conjecture of Keller about a description of such measures ([\textsc{Keller, G.} Generalized heredity in $\mathcal B$-free systems. \emph{Stoch. Dyn. 21}, 3 (2021), Paper No. 2140008]). A central assumption in our work is that $ \eta^{*} $ (the Toeplitz sequence that generates the unique minimal component of $ X_{\eta} $) is regular. From this we obtain natural periodic approximations that we frequently use in our proofs to bound the elements in $ X_{\eta} $ from above and below.
\end{abstract}

\tableofcontents
\section{Introduction}
\subsection{Background}
Given $\mathscr{B}\subseteq \N$, consider the corresponding set $\mathcal{M}_\mathscr{B}=\bigcup_{b\in\mathscr{B}}b\Z$ of the \emph{multiples} of $\mathscr{B}$ and its complement $\mathcal{F}_\mathscr{B}=\Z\setminus\mathcal{M}_\mathscr{B}$, i.e.\ the set of \emph{$\mathscr{B}$-free integers}. We study the dynamics of $\eta=\mathbf{1}_{\mathcal{F}_\mathscr{B}}\in \{0,1\}^\Z$, i.e.\ of the orbit closure $X_\eta$ of $\eta$ under the left shift $\sigma$. The motivation for such studies goes back to the 1930's, when sets of multiples were investigated from the \emph{number-theoretic perspective} by Besicovitch, Chowla, Erd\H{o}s and others (see~\cite{MR1414678} and the references therein). In 2010 Sarnak~\cite{sarnak-lectures} suggested to study the \emph{dynamics} of the square-free system, i.e.~$X_\eta$ corresponding to $\mathscr{B}$ being the set of squares of all primes. In this case $\eta|_\N$ is the square of the M\"obius function $\mu$, and the aim was to gain more knowledge about the M\"obius function itself. He formulated a certain ``program'' for $\mu^2$ and indicated how to prove the statements about $\mu^2$. Without going into details, there was a list of properties related both to measure-theoretic and topological dynamics of $X_{\mu^2}$. A natural question arose whether analogous results are true for other sets $\mathscr{B}$. The dynamics of $X_\eta$ was studied systematically for the first time in~\cite{MR3428961} in the Erd\H{o}s case, i.e.\ when $\mathscr{B}$ is infinite and pairwise coprime with $\sum_{b\in\mathscr{B}}\nicefrac{1}{b}<\infty$. In this case the properties of $X_\eta$ resemble the properties of $X_{\mu^2}$. In particular, $X_\eta$ is \emph{hereditary}, i.e.\ if $x\in X_\eta$ and $y\in\{0,1\}^\Z$ is such that $y\leq x$ coordinatewise then $y\in X_\eta$. In fact, we have 
\[
X_\eta=X_\mathscr{B}:=\{x\in\{0,1\}^\Z : |\supp x \bmod b| \leq b-1\text{ for any }b\in\mathscr{B}\}
\] 
($X_\mathscr{B}$ is called the $\mathscr{B}$-admissible subshift). When we relax the assumptions on $\mathscr{B}$, various things can happen to $X_\eta$, in particular it may no longer be hereditary. Thus, one often looks at its \emph{hereditary closure} $\widetilde{X}_\eta$, i.e.\ the smallest hereditary subshift containing~$X_\eta$. Such general $\mathscr{B}$-free systems were studied in~\cite{MR3803141}. We may have
\(
X_\eta\subsetneq \widetilde{X}_\eta\subsetneq X_\mathscr{B}
\)
(see~\cite{MR3803141} for various examples).

In this paper, we concentrate on invariant measures on $X_\eta$. Let us give now some more detailed background related to this. In the Erd\H{o}s case, $\eta$ turns out to be a generic point for the so-called \emph{Mirsky measure}~\cite{MR3428961} denoted by $\nu_\eta$:
\[
\frac{1}{L}\sum_{\ell\leq L}\delta_{\sigma^\ell \eta} \to \nu_\eta
\]
(in this case, the above formula can be treated as the definition of $\nu_\eta$). In other words, the frequencies of $0$-$1$ blocks on $\eta$ exist (in the square-free case they were first studied by Mirsky~\cite{MR21566,MR28334}, hence the name). In general, $\eta$ might not be a generic point. However, it is quasi-generic along any sequence $(\ell_i)$ realizing the lower density of $\mathcal{M}_\mathscr{B}$ (i.e.\ such that $\lim_{i\to\infty}\frac{1}{\ell_i}|\mathcal{M}_\mathscr{B}\cap[1,\ell_i]|=\liminf_{L\to\infty}\frac{1}{L}|\mathcal{M}_\mathscr{B}\cap[1,L]| =: \underline{d}(\mathcal{M}_\mathscr{B})$). This is a consequence of the deep number-theoretic result of Davenport and Erd\H{o}s~\cite{MR43835} that the logarithmic density of $\mathcal{M}_\mathscr{B}$, i.e.\ $\boldsymbol{\delta}(\mathcal{M}_\mathscr{B})=\lim_{L\to\infty}\frac{1}{\ln L}\sum_{\ell\in \mathcal{M}_\mathscr{B}\cap[1,L]}\frac{1}{\ell}$ always exists and we have 
\begin{equation}\label{DE}
\boldsymbol{\delta}(\mathcal{M}_\mathscr{B})=\underline{d}(\mathcal{M}_\mathscr{B})=\lim_{K\to \infty}d(\mathcal{M}_{\mathscr{B}_K}),\ \text{ where }\mathscr{B}_K=\{b\in\mathscr{B} : b\leq K\}
\end{equation}
($d(A)$ for $A\subseteq \Z$ stands for the natural density: $d(A)=\lim_{L\to\infty}\frac{1}{L}|A\cap [1,L]|$). Again, we call the resulting measure the Mirsky measure and denote it by $\nu_\eta$: 
\[
\lim_{i\to\infty}\frac{1}{\ell_i}\sum_{\ell\leq \ell_i}\delta_{\sigma^\ell \eta}= \nu_\eta,
\] 
see~\cite{MR3803141}. The following problems, already asked by Sarnak in the square-free case, arise:
\begin{itemize}
\item Give a description of the set $\mathcal{P}(X_\eta)$ of all invariant measures on $X_\eta$.
\item Compute the topological entropy $h(X_\eta)$ of $X_\eta$.
\item Determine, whether $X_\eta$ is \emph{intrinsically ergodic}, i.e.\ whether it has only one measure of maximal entropy.
\end{itemize}
The solution to the second problem and the positive answer to the third one in the square-free case were given by Peckner~\cite{Pe}. However, the proof used the properties of the squares of primes and it was not clear if it can be extended to a more general setting. It turned out to be true:
\begin{equation}\label{isinterg}
\text{for any }\mathscr{B}\subseteq\N,\ \widetilde{X}_\eta \text{ is intrinsically ergodic}.
\end{equation}
This was proved in~\cite{MR3356811} in the Erd\H{o}s case (where $X_\eta=\widetilde{X}_\eta$) and then, in~\cite{MR3803141}, for all sets $\mathscr{B}\subseteq\N$. Moreover, the topological entropy $h(\widetilde{X}_\eta)$ of $\widetilde{X}_\eta$ is equal to the upper density of $\mathcal{F}_\mathscr{B}$:
\begin{equation}\label{entro} 
h(\widetilde{X}_{\eta})=\overline{d}:=\overline{d}(\mathcal{F}_\mathscr{B})
\end{equation}
and 
\begin{equation}\label{miaramax} 
\text{the measure of maximal entropy on }\widetilde{X}_\eta\text{ is of the form }M_\ast(\nu_\eta\otimes B_{1/2,1/2}),
\end{equation}
where $B_{1/2,1/2}$ is the symmetric Bernoulli measure on $\{0,1\}^\Z$ and $M\colon (\{0,1\}^\Z)^2\to \{0,1\}^\Z$ stands for the coordinatewise multiplication (in each case, the proofs were given in the corresponding paper covering the intrinsic ergodicity in the same class). We also have 
\begin{equation}\label{kiedytriv} 
h(\widetilde{X}_\eta)=0 \iff \mathcal{P}(\widetilde{X}_\eta)=\{\delta_{\boldsymbol{0}}\}\iff \widetilde{X}_\eta \text{ is uniquely ergodic}
\end{equation} 
(this is true, in general, for hereditary subshifts, for a proof see~\cite{MR3007694}). 

As for the set of invariant measures, it was shown in~\cite{MR3356811} that in the Erd\H{o}s case 
\begin{equation}\label{opisek} 
\mathcal{P}(\widetilde{X}_\eta)=\{M_\ast(\nu_\eta\vee\kappa) : \kappa\in\mathcal{P}(\{0,1\}^\Z)\},
\end{equation}
where $\nu_\eta\vee\kappa$ stands for any joining of $\nu_\eta$ and $\kappa$, i.e.\ any probability measure $\rho$ on $(\{0,1\}^\Z)^2$ invariant under $\sigma^{\times 2}$ whose projection onto the first coordinate equals $\nu_\eta$ and the projection onto the second coordinate equals $\kappa$. Later, in~\cite{MR3803141}, this result was extended to any set $\mathscr{B}\subseteq\N$.

Recall that a central role in the theory of $\mathscr{B}$-free systems is played by the notion of tautness~\cite{MR1414678}:
\[
\mathscr{B}\subseteq \N \text{ is taut if for every }b\in\mathscr{B}\text{ we have }\boldsymbol{\delta}(\mathcal{M}_{\mathscr{B}\setminus \{b\}})<\boldsymbol{\delta}(\mathcal{M}_\mathscr{B}).
\]
It was shown in~\cite{MR3803141} (see Theorem C therein) that for any $\mathscr{B}\subseteq\mathbb{N}$, there exists a unique taut set $\mathscr{B}'\subseteq\N$ with $\mathcal{M}_\mathscr{B}\subseteq \mathcal{M}_{\mathscr{B}'}$ and $\nu_\eta=\nu_{\eta'}$ (for more details about $\mathscr{B}'$, see Section~\ref{seB}). In fact, we have
\begin{equation}\label{rownosc} 
\mathcal{P}(\widetilde{X}_{\eta})=\mathcal{P}(\widetilde{X}_{\eta'}).
\end{equation}
Moreover, in~\cite{MR3803141} (see Corollary 4.35 and Corollary 9.1 therein), the following combinatorial result on taut sets was proved. Fix $\mathscr{B}\subseteq\mathbb{N}$ and a taut set $\mathscr{C}\subseteq \N$. Let $\eta_\mathscr{C}:=\mathbf{1}_{\mathcal{F}_\mathscr{C}}$. Then the following are equivalent:
\begin{multline}\label{AA}
\text{for each $b\in\mathscr{B}$ there exists $c\in\mathscr{C}$ such that $c\divides b$} \iff \eta_\mathscr{C}\leq \eta \iff \widetilde{X}_{\eta_\mathscr{C}}\subseteq \widetilde{X}_\eta\\ \iff \eta_\mathscr{C}\in\widetilde{X}_\eta \iff \nu_{\eta_\mathscr{C}}\in\mathcal{P}(\widetilde{X}_\eta) \iff \mathcal{P}(\widetilde{X}_{\eta_\mathscr{C}})\subseteq \mathcal{P}(\widetilde{X}_\eta).
\end{multline}
In particular, an immediate consequence of this result is a list of conditions equivalent to $\mathscr{B}=\mathscr{C}$, whenever both $\mathscr{B}$ and $\mathscr{C}$ are taut, see Theorem~L in~\cite{MR3803141}.

Last but not least, let us mention some results related to the subset $\mathcal{P}^e(\widetilde{X}_\eta)$ of the ergodic measures on $\widetilde{X}_\eta$. It was shown in~\cite{MR3589826} that $\mathcal{P}(\widetilde{X}_\eta)$ is a Poulsen simplex (i.e.\ a non-trivial simplex with dense subset of ergodic measures with respect to the weak-star topology) whenever $h(\widetilde{X}_\eta)>0$. Recall that the density of ergodic measures implies the arcwise connectedness of the set of invariant measures~\cite{MR500918} (the latter property was proved to hold in a hereditary setting wider than just $\mathscr{B}$-free systems in~\cite{MR3803667}). Recently, a yet stronger result was obtained by Konieczny, Kupsa and Kwietniak~\cite{MR4544150}: namely,
\begin{equation}\label{BBB} 
\text{the subset }\mathcal{P}^e(\widetilde{X}_\eta) \text{ of ergodic measures on }\widetilde{X}_\eta \text{ is entropy-dense in }\mathcal{P}(\widetilde{X}_\eta),
\end{equation}
i.e.\ for any $\mu\in\mathcal{P}(\widetilde{X}_\eta)$, there exist $\mu_n\in\mathcal{P}^e(\widetilde{X}_\eta)$ such that $\mu_n\to \mu$ weakly and the measure-theoretic entropies $h(\widetilde{X}_\eta,\sigma,\mu_n)$ of $(\widetilde{X}_\eta,\sigma,\mu_n)$ converge to the measure-theoretic entropy $h(\widetilde{X}_\eta,\sigma,\mu)$ of $(\widetilde{X}_\eta,\sigma,\mu)$.

Clearly, if $X_\eta$ is hereditary, all of the above results apply to $X_\eta=\widetilde{X}_\eta$. We study analogous questions and prove the analogues of \eqref{isinterg} - \eqref{BBB} for $X_\eta$ in the non-hereditary case. For a summary of our results, see Section~\ref{semain}.

\subsection{Notation and main objects}

\subsubsection{Dynamics}
We say that $(X,T)$ is a \emph{topological dynamical system} if $T$ is a homeomorphism of a compact metric space $X$. We equip $X$ with the Borel sigma-algebra. The set of all probability Borel $T$-invariant measures will be denoted by $\mathcal{P}(X,T)$ (or just $\mathcal{P}(X)$ if $T$ is clear from the context). The subset of \emph{ergodic measures} will be denoted by $\mathcal{P}^e(X,T)$ or $\mathcal{P}^e(X)$. For each choice of $\mu\in\mathcal{P}(X)$, the triple $(X,T,\mu)$ is called a \emph{measure-theoretic dynamical system}. Given two measure-theoretic dynamical systems $(X,T,\mu)$ and $(Y,S,\nu)$, we say that $(Y,S,\nu)$ is a \emph{factor} of $(X,T,\mu)$ whenever there exists a measurable map $\pi\colon X\to Y$ (defined $\mu$-a.e.) such that $\pi_\ast(\mu)=\nu$ and $\pi\circ T=S\circ \pi$ $\mu$-a.e.

Both in the measure-theoretic and in the topological setting there is a notion of \emph{entropy} that describes the complexity of a given system. The measure-theoretic entropy of $(X,T,\mu)$ is denoted by $h(X,T,\mu)$. We skip its lengthy definition and refer the reader, e.g., to~\cite{MR2809170}.  We will mostly deal with $0$-$1$ \emph{subshifts}, i.e.\ $(X,\sigma)$, where $X\subseteq \{0,1\}^\Z$ is closed and invariant under the \emph{left shift} $\sigma\colon \{0,1\}^\Z\to \{0,1\}^\Z$. In this case the \emph{topological entropy}, denoted by $h(X)$, is easy to define: if $p_n(X)$ is the number of distinct blocks of length $n$ appearing on $X$, then $h(X)=\lim_{n\to\infty}\frac{1}{n}\log_2 p_n(X)$. If $X$ is the orbit closure of $x\in\{0,1\}^\Z$, we will write $p_n(x)$ instead of $p_n(X)$. There is the following \emph{variational principle} (valid in general, not only for subshifts): $h(X,T)=\sup_{\mu\in\mathcal{P}(X,T)}h(X,T,\mu)$. In case of subshifts there is always at least one measure $\mu$ realizing the supremum from the variational principle. If this measure is unique, we say that $X$ is \emph{intrinsically ergodic}.

Given a topological dynamical system $(X,T)$ and a point $x\in X$, we say that $x$ is a \emph{generic point} for $\mu\in\mathcal{P}(X,T)$ if $\frac{1}{L}\sum_{\ell\leq L}\delta_{T^\ell x}\to \mu$ weakly. We say that $x\in X$ is \emph{quasi-generic for $\mu$ along $(\ell_i)$} if $\frac{1}{\ell_i}\sum_{\ell\leq \ell_i}\delta_{T^\ell x}\to \mu$ weakly.

Given two measure-theoretic dynamical systems $(X_i,T_i,\mu_i)$, $i=1,2$, we say that $\rho\in \mathcal{P}(X_1\times X_2,T_1\times T_2)$ (with $X_1\times X_2$ equipped with the product sigma-algebra) is a \emph{joining} of $(X_1,T_1,\mu_1)$ and $(X_2,T_2,\mu_2)$, whenever $\mu_i=(\pi_i)_\ast(\rho)$ for $i=1,2$ ($\pi_i$ will always denote the projection onto the $i$'th coordinate, we will also use similar notation for projections onto more than one coordinate). We write then $\rho=\mu_1\vee\mu_2$ or $\rho\in J((X_1,T_1,\mu_1),(X_2,T_2,\mu_2))$. We always have $\mu_1\otimes \mu_2\in J((X_1,T_1,\mu_1),(X_2,T_2,\mu_2))$. In fact, if $(Y_i,S_i,\nu_i)$ is a factor of $(X_i,T_i,\mu_i)$ via a factor map $F_i$, $i=1,2$ and $\rho = \nu_1\vee\nu_2$, then there exists $\hat{\rho}\in J((X_1,T_1,\mu_1),(X_2,T_2,\mu_2))$ such that $(Y_1\times Y_2,S_1\times S_2,\rho)$ is a factor of $(X_1\times X_2,T_1\times T_2,\hat{\rho})$ via $F_1\times F_2$ (for example the so-called \emph{relatively independent extension} of $\rho$ has such a property). Last but not least, for $S\colon (X_1,T_1,\mu_1)\to(X_2,T_2,\mu_2)$, we will denote by $\triangle_S$ the graph joining of $(X_2,T_2,\mu_2)$ and $(X_1,T_1,\mu_1)$ given by $\triangle_S(A_2\times A_1)=\mu_1(S^{-1}A_2\cap A_1)$ for any measurable $A_1\subseteq X_1$, $A_2\subseteq X_2$. (Note that usually $\triangle_S$ stands for the joining of $T_1$ and $T_2$ where the coordinates are written in the opposite order.) For more information on joinings we refer the reader to~\cite{MR1958753}.
\subsubsection{Toeplitz systems}
A sequence $x\in\{0,1\}^\Z$ is called \emph{Toeplitz} if for each $i\in\Z$ there exists $s\in\N$ such that $x(i+sk)=x(i)$ for all $k\in\Z$. \emph{A Toeplitz subshift} is the orbit closure of a Toeplitz sequence under the left shift. Any Toeplitz subshift is \emph{minimal} \cite{MR756807} (the orbit of each point is dense). For each symbol $a\in\{0,1\}$ and any $s\in\N$, we set
\[
\Per(x,a,s):=\{i\in\Z \ :\ x(i+s k)=a\text{ for all } k \in\Z\}.
\] 
The \emph{$s$-periodic part} of $x$ is defined to be the set of positions 
\[
\Per(x,s):=\Per(x,0,s)\cup\Per(x,1,s).
\]
A Toeplitz sequence $x$ is called \emph{regular} if
\[
\lim_{r\to \infty}d\left(\bigcup_{s\leq r}\operatorname{Per}(x,s)\right)=1.
\]
(Notice that this is equivalent to the usual definition via the so-called period structure.) The remaining Toeplitz sequences are called \emph{irregular}. For any regular Toeplitz sequence, the corresponding Toeplitz subshift is uniquely ergodic, see Theorem~5 in~\cite{MR255766}. For more information on Toeplitz sequences, we refer the reader for example to the survey~\cite{MR2180227}.

\subsubsection{\texorpdfstring{$\mathscr{B}$}{B}-free systems}
\label{seB}
Since the notation differs a bit between various papers related to $\mathscr{B}$-free systems that are crucial for this work, we need to make certain adjustments.
\paragraph{Subshifts}
First, let us recall the main subshifts that are of our interest. Given $\mathscr{B}\subseteq \N$, we consider
\[
X_\eta=\overline{\{\sigma^k\eta : k\in\Z\}}\subseteq X_\mathscr{B}=\{x\in\{0,1\}^\Z : |\text{supp }x\bmod b|\leq b-1 \text{ for each }b\in\mathscr{B}\},
\]
where $\text{supp }x=\{n\in\Z : x( n ) =1\}$ stands for the support of $x$. They are called the \emph{$\mathscr{B}$-free subshift \( X_{\eta} \)} and the \emph{$\mathscr{B}$-admissible subshift \( X_{\sB} \)} corresponding to the set $\mathscr{B}$. Moreover, the so-called \emph{hereditary closure} $\widetilde{X}_\eta$ of $X_\eta$ is given by $\widetilde{X}_\eta=M(X_\eta\times\{0,1\}^\Z)$, where $M\colon (\{0,1\}^\Z)^2\to\{0,1\}^\Z$ stands for the coordinatewise multiplication of sequences (this is equivalent to defining $\widetilde{X}_\eta$ as the smallest hereditary subshift containing $X_\eta$). Since $X_\mathscr{B}$ is hereditary, we have
\[
X_\eta\subseteq \widetilde{X}_\eta\subseteq X_\mathscr{B}.
\]
Usually we will assume that $\mathscr{B}$ is \emph{primitive}, i.e.\ for any $b,b'\in\mathscr{B}$, if $b\divides b'$ then $b=b'$. This assumption has no influence on the studied dynamics since $\mathcal{M}_\mathscr{B}=\mathcal{M}_{\mathscr{B}^{prim}}$, where by $\mathscr{B}^{prim}$ we will denote the maximal primitive subset of $\mathscr{B}$.

In fact, there are also some other interesting subshifts of $X_\mathscr{B}$, that we will discuss in a later paragraph. Let us now give an overview of the most important classes of sets $\mathscr{B}$ appearing in the literature. We say that $\mathscr{B}\subseteq \N$ is:
\begin{itemize}
\item \emph{Erd\H{o}s} if $\mathscr{B}$ is infinite, pairwise coprime and $\sum_{b\in\mathscr{B}}\nicefrac{1}{b}<\infty$,
\item \emph{Besicovitch} if $d(\mathcal{M}_\mathscr{B})$ exists,
\item \emph{taut} if for every $b\in\mathscr{B}$, we have $\boldsymbol{\delta}(\mathcal{M}_{\mathscr{B}\setminus\{b\}})<\boldsymbol{\delta}(\mathcal{M}_\mathscr{B})$,
\item \emph{Behrend} if $\boldsymbol{\delta}(\mathcal{M}_\mathscr{B})=1$.
\end{itemize}
Recall (see Theorem 3.7 in~\cite{MR3803141}) that any non-trivial Behrend set contains an infinite pairwise coprime subset. Moreover, $\mathscr{B}$ is taut if and only if $c\mathcal{A}\not\subseteq \mathscr{B}$ for any Behrend set $\mathcal{A}$ and any \( c \in \N \), see~\cite{MR1414678}.

Given $\mathscr{B}\subseteq \mathbb{N}$, we can now define:
\begin{itemize}
\item $\mathscr{B}':=(\mathscr{B}\cup C)^{prim}$, where 
\[
C=\{c\in\N : c\mathcal{A}\subseteq \mathscr{B}\text{ for some Behrend set }\mathcal{A}\}.
\] 
$\mathscr{B}'$ is called the \emph{tautification of $\mathscr{B}$}, and it is the unique taut set such that $\nu_\eta=\nu_{\eta'}$ (see~\cite{MR3803141} and \cite{DKKu} for more details about $\mathscr{B}'$).
\item $\mathscr{B}^*:=(\mathscr{B}\cup D)^{prim}$, where 
\[
D=\{d\in \N : d\mathcal{A}\subseteq\mathscr{B}\text{ for some infinite pairwise coprime set }\mathcal{A}\}.
\]
$\mathscr{B}^*$ corresponds to the unique minimal subshift $X_{\eta^*}$ of $X_\eta$ (see Corollary 5 in~\cite{MR4280951}). 
By Lemma 3 c) in~\cite{MR4280951}, $\mathscr{B}^*$ does not contain a scaled copy of an infinite pairwise coprime subset. Thus, $\mathscr{B}^*$ does not contain a scaled copy of a Behrend set and, hence, $\mathscr{B}^*$ is taut (for another proof see Lemma~3.7 in~\cite{MR3947636}). Moreover, $\eta^*$ is a Toeplitz sequence (see Theorem B in~\cite{MR3947636}) with a subsequence of $(\lcm(\mathscr{B}^*_K))_{K\geq 1}$ being its period structure, which in particular means that $\eta^*$ is a regular Toeplitz sequence if and only if
\begin{equation}\label{pozycje}
\lim_{ K \to \infty }{d}( \Z \setminus \operatorname{Per}( \eta^{*} ,\lcm( \mathscr{B}^*_{K} ) ) = 0 .
\end{equation}
(We won't need the notion of a period structure of a Toeplitz sequence, so let us skip it here and refer the reader to~\cite{MR2180227}).
 
\end{itemize}
We have
\begin{equation}\label{j3}
X_{\eta^*}\subseteq X_{\eta'}\subseteq X_\eta,
\end{equation}
see Remark 3.22 in \cite{DKKu} for the first inclusion, and (27) in \cite{MR4289651} for the second one. Note also that it was shown earlier that $X_{\eta^*}\subseteq X_\eta$, see Corollary 1.5 in \cite{MR4280951}. We have
\begin{equation}\label{zale}
(\mathscr{B}')^* = \mathscr{B}^*.
\end{equation}
Indeed, $X_{(\eta')^*}$ is the unique minimal subshift of $X_{\eta'}$, while $X_{\eta^*}$ is the unique minimal subshift of $X_\eta$. Hence, since $X_{\eta'}\subseteq X_\eta$, it follows that $X_{(\eta')^*}=X_{\eta^*}$. This is equivalent to~\eqref{zale} by  Theorem~L from~\cite{MR3803141}, cf.~\eqref{AA}.

\paragraph{Basic algebraic objects}
There are also certain important objects of algebraic nature related to $\mathscr{B}$:
\begin{itemize}
\item
the product group $G:=\prod_{b\in\mathscr{B}}\Z/b\Z$,
\item
the canonical embedding $\Delta\colon\Z \to G$ given by $\Delta(n)=(n,n,\dots)$,
\item
the subgroup $H:=\overline{\Delta(\Z)}$,
\item
the rotation $R=R_{\Delta(1)}\colon H\to H$ given by $R(h)=h+\Delta(1)$,
\item
the window associated to $\mathscr{B}$, given by \(
W:=\{h\in H : h_b\neq 0 \text{ for each }b\in\mathscr{B}\} \), and the closure of its interior, which we denote by \( \underline{W} := \overline{int(W)} \),
\item
the coding function $\varphi_A\colon H\to \{0,1\}^\Z$ for $A\subseteq H$, given by $\varphi_A(h)(n)=1\iff h+\Delta(n)\in A$; note that $\varphi_A\circ R=\sigma\circ \varphi_A$; in particular, we will use
\[
\varphi:=\varphi_W \text{ and }\underline{\varphi}:=\varphi_{\underline{W}};
\]
note that $\eta=\varphi(\Delta(0))$,
\item
the subset of admissible sequences with only one residue class mod each $b\in\mathscr{B}$ missing: 
\[
Y:=\{x\in\{0,1\}^Z : |\text{supp }x \bmod b|=b-1 \text{ for each }b\in\mathscr{B}\}\subseteq X_\mathscr{B},
\]
\item
the function $\theta\colon Y\to G$ ``reading'' the (unique) missing residue class mod each $b\in\mathscr{B}$, which is given by $\theta(y)=h\iff \text{supp }y\cap (b\Z-h_b)=\emptyset$ for $b\in\mathscr{B}$.
\end{itemize}

All these objects can be defined just as well for $\mathscr{B}'$ and $\mathscr{B}^*$. We will use the superscripts $'$ and ${}^*$ to indicate which of them we mean. For example we have
$H'=\overline{\Delta'(\Z)}$ where $\Delta'\colon \Z \to G'$ and similarly
\[
W':=\{h\in H' : h_b\neq 0\bmod b \text{ for each }b\in \mathscr{B}'\} \text{ and } \underline{W^{\prime}} = \overline{int(W^{\prime})}.
\]
Also, we will write $\varphi^{\prime}$ for $\varphi^{\prime}_{W'}$ and $\underline{\varphi^{\prime}}$ for $\varphi^{\prime}_{\underline{W'}}$. 
\begin{remark}
Notice that the meaning of $W'$ differs from the one used in~\cite{MR4280951}: Keller used $W'$ for $\overline{int(W)}$, which we denote as $\underline{W}$. 
\end{remark}

\paragraph{Group homomorphisms}
By Lemma~1.2 in~\cite{MR4280951}, there is a continuous surjective group homomorphism 
\[
\Gamma_{H,H^*}\colon H\to H^*
\] 
given by the unique continuous extension of the map \( \Delta(n) \mapsto \Delta^*(n) \) to \( H \). In fact, the following lemma provides a direct formula for $\Gamma_{H,H^*}$ (by the definition of $\mathscr{B}^*$, for each $b^*$ there exists $b\in\mathscr{B}$ such that $b^*\divides b$).
\begin{lemma}\label{W3}
Let $h\in H$. Then
$\Gamma_{H,H^*}(h)_{b^*}=h_b\bmod b^*$ for any $b\in\sB$ and any $b^*\in\sB^*$ such that $b^*\mid b$. In particular, $\Gamma_{H,H^*}(h)_b=h_b$ for any $b\in\sB\cap\sB^*$.
\end{lemma}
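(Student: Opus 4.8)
The plan is to verify the asserted formula first on the dense subgroup $\Delta(\Z)\subseteq H$, where $\Gamma_{H,H^*}$ is given explicitly, and then to propagate it to all of $H$ by continuity. Recall that $\Gamma_{H,H^*}$ is by definition the continuous extension of the map $\Delta(n)\mapsto\Delta^*(n)$, and that $\Delta(\Z)$ is dense in $H=\overline{\Delta(\Z)}$ by construction.

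First I would fix $b\in\sB$ and $b^*\in\sB^*$ with $b^*\mid b$ and evaluate both sides on $\Delta(n)$ for an arbitrary $n\in\Z$. By the definition of the embeddings, $\Delta(n)_b=n\bmod b$ while $\Gamma_{H,H^*}(\Delta(n))_{b^*}=\Delta^*(n)_{b^*}=n\bmod b^*$. The one genuinely arithmetic input is the elementary identity $(n\bmod b)\bmod b^*=n\bmod b^*$, valid precisely because $b^*\mid b$. Combining these gives $\Gamma_{H,H^*}(\Delta(n))_{b^*}=n\bmod b^*=(n\bmod b)\bmod b^*=\Delta(n)_b\bmod b^*$, so the claimed equality holds for every $h\in\Delta(\Z)$.

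Next I would extend the identity to all $h\in H$ by checking that both sides are continuous in $h$. The left-hand side $h\mapsto\Gamma_{H,H^*}(h)_{b^*}$ is continuous, being the composition of the continuous homomorphism $\Gamma_{H,H^*}$ with the coordinate projection $G^*\to\Z/b^*\Z$. For the right-hand side, the projection $G\to\Z/b\Z$ onto the $b$-coordinate is continuous, and since $b^*\mid b$ the reduction $\Z/b\Z\to\Z/b^*\Z$ is a homomorphism of finite discrete groups, hence continuous; thus $h\mapsto h_b\bmod b^*$ is continuous as well. Two continuous maps into the Hausdorff (discrete) group $\Z/b^*\Z$ that coincide on the dense set $\Delta(\Z)$ must coincide on all of $H$, which yields the identity in general.

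Finally, the ``in particular'' clause follows by specializing $b^*=b$ for $b\in\sB\cap\sB^*$: then $b^*\mid b$ holds trivially and $h_b\bmod b^*=h_b\bmod b=h_b$, since $h_b$ already lies in $\Z/b\Z$. I do not expect any real obstacle here; the only points requiring care are the modular-arithmetic identity and keeping careful track of which factor each coordinate belongs to, namely that on the left $b^*$ indexes a coordinate of $G^*$ whereas on the right $b$ indexes a coordinate of $G$.
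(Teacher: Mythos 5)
Your proof is correct and follows essentially the same route as the paper: verify the identity on the dense subgroup $\Delta(\Z)$ using the elementary fact that $(n\bmod b)\bmod b^*=n\bmod b^*$ when $b^*\mid b$, then extend to all of $H$ by continuity of $\Gamma_{H,H^*}$ and the discreteness of the coordinate groups. The paper phrases the continuity step via a sequence $\Delta(n_k)\to h$ whose $b$-coordinates stabilize, but this is the same argument in sequential form.
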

\begin{proof}
Let $(n_k)_{k\geq1}$ be such that $\lim_{k\to\infty}\Delta(n_k) = h$. Fix $ b \in \sB $. Then $ n_k\bmod b = h_b $ for sufficiently large $k\geq1$. Therefore, $n_k\bmod b^* = h_b\bmod b^* $ for any $b^*$ such that $b^*\mid b$. The assertion follows by the continuity of $\Gamma_{H,H^*}$.
\end{proof}

Moreover, it was shown in Lemma~1.4 in~\cite{MR4280951} that
\begin{equation}\label{mapsto}
\Gamma_{H, H^{*}}( \underline{W} )=W^{*} \text{ and } \Gamma_{H, H^{*}}(H\setminus \underline{W})=H^{*} \setminus W^{*}.
\end{equation}
It follows that
\begin{equation}\label{eqn:PhiGamma}
\underline{\varphi}(h)=\varphi^*(\Gamma_{H,H^*}(h)).
\end{equation}
Indeed, \( \varphi^*(\Gamma_{H,H^{*}}(h))( n ) = 1 \) if and only if \( \Gamma_{H,H^{*}}( h ) + \Delta^{*}(n) = \Gamma_{H,H^{*}}( h + \Delta(n) ) \in W^{*} \), which is equivalent to \( h + \Delta(n) \in \underline{W} \) due to \eqref{mapsto}.

\paragraph{More subshifts}
We will also need:
\[
X_\varphi:=\overline{\varphi(H)}
\]
and
\begin{equation}\label{m1}
[\underline{\varphi},\varphi]:=\{x\in\{0,1\}^\Z : \underline{\varphi}(h)\leq x\leq \varphi(h)\text{ for some }h\in H\}.
\end{equation}
The subshift $X_\varphi$ first appeared in~\cite{MR3784254} (under the name $\mathcal{M}_W^G$) and was later studied in~\cite{MR3947636}. The set $[\underline{\varphi},\varphi]$ that may not be a subshift (it is $\sigma$-invariant, but is not necessarily closed) was introduced in~\cite{MR4280951}. If $\mathscr{B}$ is primitive then $\varphi(H)\subseteq [\underline{\varphi},\varphi]\subseteq X_\varphi$ by Theorem~1.1 in~\cite{MR4280951}, so in particular
\begin{equation}\label{j1}
X_\varphi=\overline{[\underline{\varphi},\varphi]}.
\end{equation} 
Moreover, if $\mathscr{B}$ is taut then by Corollary 1.2 in~\cite{MR4280951} we have
\begin{equation}\label{j2}
X_\eta=X_\varphi=\overline{[\underline{\varphi},\varphi]}.
\end{equation}

Similar notation to the one in~\eqref{m1} will be used for sequences. Given $w,x\in \{0,1\}^\Z$, we set
\[
[w,x]:=\{\sigma^m y\in \{0,1\}^\Z :  w\leq y \leq  x,\ m\in\Z\}.
\]
Again, this may not be a subshift; one can consider its closure $\overline{[w,x]}$ if necessary.
\begin{remark}
\label{rem:CodomainY}
Let us comment here on the codomain of $\theta$. Since $\theta$ is defined on whole $Y$, in general we cannot say much more than that $\theta(y)\in G$. It was shown in Remark~2.45 in~\cite{MR3803141} that $\theta(Y\cap\widetilde{X}_\eta)\subseteq H$. However, this is not sufficient for us: we need to think of $\theta$ as of a function from (at least) $Y\cap [\underline{\varphi},\varphi]$ to $H$. We will show that
\[ \theta(Y\cap \widetilde{X_\varphi}) = \theta( Y \cap X_{\varphi} ) \subseteq H. \]
In the first equality, ``\( \supseteq \)'' follows from \( \widetilde{X}_\varphi \supseteq X_{\varphi} \). For the converse inclusion, consider \( y \in Y \cap \widetilde{X}_{\varphi}  \) and \( x \in X_{\varphi} = \overline{\varphi(H)} \) with \( y \leq x \). Notice that \( \overline{\varphi(H)} \subseteq X_{\sB} \) since \( \supp \varphi( h ) \) misses the residue class \(-h_{b}\) modulo \( b \) and \( X_{\sB} \) is closed. Thus, \( \supp x \) misses at least one residue class for each \( b \in \sB \). Due to \( y \in Y \) and \( y \leq x \), the support of \( x \) misses exactly one residue class for each \( b \), namely the same as \( \supp y \). This yields \( \theta(y) = \theta( x ) \in \theta( Y \cap X_{\varphi} ) \).

To see \( \theta( Y \cap X_{\varphi} ) \subseteq H \), we fix \( b \in \sB \) and \( x \in X_{ \varphi } = \overline{ \varphi( H ) } \). Then there exists a sequence \( ( \varphi(h_{n}) ) \) which converges to \( x \), and (by definition) we have \( \varphi(h_{n}) |_{ -(h_{n})_{b} + b \Z } = 0 \). Since \( H \) is compact, we can assume that \( (h_{n}) \) has a limit \( h \in H \). In particular, there exists \( n_{0} \in \N \) with \( (h_{n})_{b} = h_{b} \) for all \( n \geq n_{0} \). This yields \( \varphi(h_{n}) |_{ -h_{b} + b \Z } = 0  \) for all \( n \geq n_{0} \) and thus \( x |_{ -h_{b} + b \Z } = 0 \). For \( x \in Y \cap X_{ \varphi } \), it follows that \( -h_{b} \) is the unique residue class modulo \( b \) that \( \supp x \) misses. Since \( b \in \sB \) was arbitrary, we obtain \( \theta( x ) = h \in H \).
\end{remark}

\subsubsection{Dynamical diagrams}
The aim of this section is to introduce a certain language related to diagrams involving dynamical systems and factoring maps between them. It will allow us to summarize some of our results on diagrams, which, in turn, can help to understand the structure of some more complicated proofs since the diagrams are easier to ``glue together'' than the assertions written in the form of sentences. We will use the language of category theory. Namely, we consider the category where:
\begin{itemize}
\item
the \emph{objects} are triples of the form $(X,T,\mathcal{P}_X)$, where $(X,T)$ is a topological dynamical system and $\emptyset\neq\mathcal{P}_X\subseteq \mathcal{P}(X)$; if $\mathcal{P}_X=\mathcal{P}(X)$, we skip it and write $(X,T)$ instead of $(X,T,\mathcal{P}(X))$;
\item
a \emph{morphism} from $(X,T,\mathcal{P}_X)$ to $(Y,S,\mathcal{P}_Y)$ is a map $f\colon (X,T,\mathcal{P}_X)\to (Y,S,\mathcal{P}_Y)$ such that there exist $X_0\subseteq X$ with $\mu(X_0)=1$ for any $\mu\in\mathcal{P}_X$, $f\colon X_0\to Y$, $f_\ast(\mathcal{P}_X)\subseteq \mathcal{P}_Y$ and $S\circ f=f\circ T$ on $X_0$.
\end{itemize}
Any graph whose vertices are the above-defined objects and arrows denote morphisms is called a \emph{dynamical diagram}.

\begin{remark}\label{thesame}
We identify two morphisms $f,g\colon (X,T,\mathcal{P}_X)\to (Y,S,\mathcal{P}_Y)$, whenever $f$ and $g$ agree on a subset $X_0\subseteq X$ that is of full measure for every measure $\mu\in\mathcal{P}_X$.
\end{remark}
\begin{definition}
We define the \emph{composition} of morphisms $f\colon (X,T,\mathcal{P}_X)\to (Y,S,\mathcal{P}_Y)$ and $g\colon (Y,S,\mathcal{P}_Y)\to (Z,R,\mathcal{P}_Z)$ as the composition $g\circ f$. Notice that such a definition is correct in view of Remark~\ref{thesame}. Indeed, let $f\colon X_0\to Y$ and $g\colon Y_0\to Z$, where $\mu(X_0)=1$ for every $\mu\in\mathcal{P}_X$ and $\nu(Y_0)=1$ for every $\nu\in\mathcal{P}_Y$. Then the composition $g\circ f$ is defined on $X_0\cap f^{-1}(Y_0)$ and $\mu(X_0\cap f^{-1}(Y_0))=1$ for any $\mu\in\mathcal{P}_X$.
\end{definition}

\begin{definition}
We will say that a dynamical diagram \emph{commutes} if for any choice of $(X,T,\mathcal{P}_X)$ and $(Y,S,\mathcal{P}_Y)$ in this diagram the composition of morphisms along any path connecting $(X,T,\mathcal{P}_X)$ with $(Y,S,\mathcal{P}_Y)$ does not depend on the choice of the path, including the trivial (zero) path.
\end{definition}
\begin{remark}\label{identycznosc}
In the definition of commutativity, we implicitly assume that our diagram includes, for each vertex $(X,T,\mathcal{P}_X)$, the identity map $id\colon (X,T,\mathcal{P}_X)\to (X,T,\mathcal{P}_X)$. To increase the readability of the diagrams, we will skip the corresponding arrow in our figures. Notice that this means in particular that whenever a dynamical diagram of the form
\[
(X,T,\mathcal{P}_X)\mathrel{\mathop{\rightleftarrows}^{f}_{g}}(Y,S,\mathcal{P}_Y)
\] 
is commutative, then $g\circ f=id_X$ a.e.\ with respect to any $\mu\in\mathcal{P}_X$ and $f\circ g=id_Y$ a.e.\ with respect to any $\nu\in\mathcal{P}_Y$. Note that usually diagrams with loops do not appear in the context of commutative diagrams in category theory -- they will however appear in the present paper.
\end{remark}
\begin{remark}
\label{rem:linDiagComm}
In a commutative diagram for any pair of its vertices $(X,T,\mathcal{P}_X)$, $(Y,S,\mathcal{P}_Y)$ there is at most one morphism $f\colon (X,T,\mathcal{P}_X)\to (Y,S,\mathcal{P}_Y)$. Notice  also that any linear dynamical diagram is automatically commutative. (By a linear diagram we mean any diagram whose underlying undirected graph consinsts of vertices arranged in a line.) (The same applies to any dynamical diagram that is of the form of a directed tree -- a graph whose underlying undirected graph is a tree, i.e.\ a connected acyclic undirected graph.) 
\end{remark}

\begin{definition}
We will say that a morphism $f\colon (X,T,\mathcal{P}_X) \to (Y,S,\mathcal{P}_Y)$ is \emph{surjective} if $f_\ast(\mathcal{P}_X)=\mathcal{P}_Y$. We will say that a dynamical diagram is \emph{surjective} if every morphism in this diagram is surjective. If $(X,T,\mathcal{P}_X)\xrightarrow{f} (Y,S,\mathcal{P}_Y)$ is surjective, we will sometimes just say that (the morphism) $f$ is surjective. Notice that this notion is not the same as the surjectivity of the map $f\colon X\to Y$.
\end{definition}
\begin{remark}[cf.\ Remark~\ref{identycznosc}]\label{AAA}
Any commutative dynamical diagram that is a loop is automatically surjective. E.g.~if
\[
(X,T,\mathcal{P}_X)\mathrel{\mathop{\rightleftarrows}^{f}_{g}}(Y,S,\mathcal{P}_Y)
\] 
is a commutative dynamical diagram then it is surjective. Indeed, $\mathcal{P}_X=id_\ast(\mathcal{P}_X)=g_\ast(f_\ast(\mathcal{P}_X))\subseteq g_\ast(\mathcal{P}_Y)\subseteq \mathcal{P}_X$, so, in fact, $\mathcal{P}_X=g_\ast(\mathcal{P}_Y)$. By the same token, $\mathcal{P}_Y=f_\ast(\mathcal{P}_X)$. 
\end{remark}

\begin{example}\label{ega}
\item
\begin{enumerate}
\item Suppose that $\mathscr{B}\subseteq \mathbb{N}$ is taut. Then $\nu_\eta\in\mathcal{P}(X_\eta\cap Y)$ by Theorem H in~\cite{MR3803141}, so $\mathcal{P}(X_\eta\cap Y)\neq\emptyset$. Thus
\[
(X_\eta\cap Y,\sigma)\xrightarrow{\theta}(H,R)\xrightarrow{\varphi}(X_\eta,\sigma)
\]
is a dynamical diagram. Its subdiagram $(X_\eta\cap Y,\sigma)\xrightarrow{\theta}(H,R)$ is surjective (by the unique ergodicity of $(H,R)$, we have $\theta_\ast(\nu)=m_H\in\mathcal{P}(H)$ for any $\nu\in\mathcal{P}(X_\eta\cap Y)$), while $(H,R)\xrightarrow{\varphi}(X_\eta,\sigma)$ is not surjective unless $X_\eta$ is uniquely ergodic (cf.\ Corollary~\ref{zerowanie} in Section~\ref{mainresults}).
\item 
The dynamical diagram
\[
(\{0,1\}^\Z,\sigma)\mathrel{\mathop{\rightleftarrows}^{\sigma}_{\sigma}} (\{0,1\}^\Z,\sigma)
\]
does not commute: indeed, $\sigma\circ \sigma\neq id$ (except at the four fixed points of $\sigma^2$).
Notice however, that if we equip each vertex with
$\emptyset\neq\mathcal{P}\subseteq\{\delta_{\mathbf{0}},\frac{1}{2}(\delta_{\ldots 1 0 1 0 1\ldots}+\delta_{\ldots 0 1 0 1 0\ldots}),\delta_{\mathbf{1}}\})$ then
\[
(\{0,1\}^\Z,\sigma,\mathcal{P})\mathrel{\mathop{\rightleftarrows}^{\sigma}_{\sigma}} (\{0,1\}^\Z,\sigma,\mathcal{P})
\]
becomes a commutative dynamical diagram (and thus it is surjective by Remark~\ref{AAA}).
\item If $\mathscr{B}\subseteq \N$ is taut then  
\[
(X_\eta,\sigma,\{\nu_\eta\})\mathrel{\mathop{\rightleftarrows}^{\varphi^{-1}}_{\varphi}}(H,R)
\] is a commutative dynamical diagram (and thus it is surjective by Remark~\ref{AAA}). Indeed, $\varphi\colon (H,R,m_H)\to (X_\eta,\sigma,\nu_\eta)$ is a measure-theoretic isomorphism, see~\cite{MR3428961} for the Erd\H{o}s case and Theorem F in~\cite{MR3803141} for the taut case. The map $\varphi^{-1}$ can be replaced with $\theta$ (recall that for taut $\mathscr{B}$, we have $\nu_\eta(X_\eta\cap Y)=1$, so $\theta$ is well-defined $\nu_\eta$-a.e.\ on $X_\eta$).
\item
The following diagram 
\[
((\{0,1\}^\Z)^2,\sigma^{\times 2},\{\nu_\eta\vee\kappa : \kappa\in \mathcal{P}(\{0,1\}^\Z)\})\xrightarrow{M} (\widetilde{X}_\eta,\sigma)
\]
is clearly a dynamical diagram (as $M\circ (\sigma\times \sigma)=\sigma\circ M$ everywhere). It is linear, hence commutative. Moreover, it is surjective by~\eqref{opisek}.
\end{enumerate}
\end{example}

\subsection{Summary}\label{semain}
In this section we present our main results. They are divided into three groups:
\begin{itemize}
\item results about invariant measures, 
\item combinatorial results related to the notion of tautness,
\item entropy results. 
\end{itemize}
We also discuss how to interpret some of them in terms of dynamical diagrams and indicate the main steps in their proofs.

\subsubsection{Main results: invariant measures}\label{mainresults}
In~\cite{MR4280951}, Keller formulated a conjecture on the form of $\mathcal{P}(X_\eta)$. Let us restate it using our notation.
\begin{conjecture}[{Conjecture~1 in~\cite{MR4280951}}]
\label{hipoteza}
Let $\mathscr{B}\subseteq\N$ be such that $\eta^*$ is a regular Toeplitz sequence. Then for any $\nu\in \mathcal{P}(X_\varphi)$, there exists $\rho\in \mathcal{P}(H\times \{0,1\}^\Z,R\times \sigma)$ such that for any measurable $A\subseteq X_\varphi$
\[
\nu(A)=\int_{H\times \{0,1\}^\Z}\mathbf{1}_A(\underline{\varphi}(h)+x\cdot(\varphi(h)-\underline{\varphi}(h)))\, d\rho(h,x).
\]
In other words, for each $\nu\in\mathcal{P}(X_\varphi)$, we have $\nu=(M_H)_\ast(\rho)$ for some $\rho\in\mathcal{P}(H\times \{0,1\}^\Z, R\times \sigma)$, where $M_H\colon H\times \{0,1\}^\Z\to [\underline{\varphi},\varphi]$ is given by
\[ M_H(h,x)=\underline{\varphi}(h)+x\cdot(\varphi(h)-\underline{\varphi}(h)). \]
\end{conjecture}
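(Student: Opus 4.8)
The plan is to prove the two inclusions separately, the content being that every $\nu\in\mathcal{P}(X_\varphi)$ has the form $(M_H)_\ast\rho$; the reverse inclusion is easy. Indeed, $\varphi,\underline{\varphi}\colon H\to\{0,1\}^\Z$ are Borel and satisfy $\varphi\circ R=\sigma\circ\varphi$, $\underline{\varphi}\circ R=\sigma\circ\underline{\varphi}$, so $M_H$ is a Borel map with $M_H\circ(R\times\sigma)=\sigma\circ M_H$ and image inside $[\underline{\varphi},\varphi]\subseteq X_\varphi$ (assuming, as we may, that $\mathscr{B}$ is primitive, by \eqref{j1}); hence $(M_H)_\ast\rho$ is a $\sigma$-invariant Borel probability measure on $X_\varphi$. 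The key geometric observation for the main inclusion is that the \emph{upper} bound comes for free while the \emph{lower} bound is exactly what regularity controls. Since each $h\mapsto h_b$ is continuous, $W=\{h:h_b\neq0\ \forall b\}$ is closed, so $\{h:\varphi(h)(n)=1\}=W-\Delta(n)$ is closed for every $n$. Thus if $\varphi(h_k)\to x$ and $h_{k_j}\to h$ (compactness of $H$), then $x\le\varphi(h)$, because $\varphi(h)(n)=0$ is the open condition $h+\Delta(n)\in W^c$ and propagates along the tail. Consequently
\[
U(x):=\{h\in H: x\le\varphi(h)\}\neq\emptyset\quad\text{for every }x\in X_\varphi,
\]
and $Z_U:=\{(h,x)\in H\times X_\varphi: x\le\varphi(h)\}$ is closed and $R\times\sigma$-invariant.

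The first main step, which I expect to be the principal obstacle, is to show that regularity of $\eta^*$ forces $\nu([\underline{\varphi},\varphi])=1$, i.e.\ that $\nu$-a.e.\ $x$ admits some $h\in U(x)$ with additionally $\underline{\varphi}(h)\le x$. By \eqref{eqn:PhiGamma} the lower pattern depends only on the phase, $\underline{\varphi}(h)=\varphi^*(\Gamma_{H,H^*}(h))$, and by \eqref{mapsto} one has $\{h:\underline{\varphi}(h)(0)=1\}=\underline{W}=\Gamma_{H,H^*}^{-1}(W^*)$. I would recast regularity \eqref{pozycje} in measure-theoretic form as $m_{H^*}(\partial W^*)=0$ (equivalently $m_{H^*}(W^*\setminus\operatorname{int}W^*)=0$), via unique ergodicity of $(H^*,R^*)$ and the $m_{H^*}$-a.e.\ continuity of $\varphi^*$. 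From this, for $\nu$-a.e.\ $x$ the set $\{h^*\in H^*:\varphi^*(h^*)\le x\}$ is a singleton (distinct phases conflict on a positive-density set of forced coordinates), which pins down a unique phase $h^*(x)$ with $\varphi^*(h^*(x))\le x$. Lifting $h^*(x)$ through $\Gamma_{H,H^*}$ and choosing each $h_b\equiv h^*(x)_{b^*}\pmod{b^*}$ so that $-h_b\bmod b$ is a residue class missed by $\supp x$ (Lemma~\ref{W3}) yields $h\in U(x)$ with $\underline{\varphi}(h)=\varphi^*(h^*(x))\le x$, giving $x\in[\underline{\varphi},\varphi]$.

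Granting this, a Kuratowski--Ryll-Nardzewski selection produces a Borel map $s\colon x\mapsto h\in V(x):=\{h:\underline{\varphi}(h)\le x\le\varphi(h)\}$, defined $\nu$-a.e. Set $\lambda:=(s,\mathrm{id})_\ast\nu$, supported on the invariant Borel set $Z:=\{(h,x):\underline{\varphi}(h)\le x\le\varphi(h)\}$, and let $\rho$ be a weak-$\ast$ limit of the averages $\frac1N\sum_{n<N}(R\times\sigma)^n_\ast\lambda$; then $\rho$ is $R\times\sigma$-invariant. Since $p_2(h,x)=x$ is continuous and $(p_2)_\ast\lambda=\nu$ with $\nu$ invariant, we get $(p_2)_\ast\rho=\nu$. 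It remains to upgrade this to $(M_H)_\ast\rho=\nu$, for which it suffices that $\rho(Z)=1$, because $M_H=p_2$ on $Z$.

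The upper bound $x\le\varphi(h)$ survives the limit as $Z_U$ is closed. For the lower bound I argue one coordinate at a time: by invariance it is enough to show $\rho(\{\Gamma_{H,H^*}(h)\in W^*,\ x(0)=0\})=0$. Here $(\pi_1)_\ast\rho$ is $R$-invariant, so its image under $\Gamma_{H,H^*}$ is $R^*$-invariant and hence equals $m_{H^*}$ by unique ergodicity of $(H^*,R^*)$; combined with $m_{H^*}(\partial W^*)=0$ this lets me replace $W^*$ by $\operatorname{int}W^*$ at no cost. The set $\{\Gamma_{H,H^*}(h)\in\operatorname{int}W^*,\ x(0)=0\}$ is then open and disjoint from $Z$, so it receives zero mass from every average $\frac1N\sum_{n<N}(R\times\sigma)^n_\ast\lambda$ and, by the portmanteau theorem, from $\rho$. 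Therefore $\rho(Z)=1$, whence $(M_H)_\ast\rho=(p_2)_\ast\rho=\nu$, which proves the conjecture.
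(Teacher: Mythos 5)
Your second half is essentially sound and, had the first step been secured, would give a genuinely softer proof of the conjecture than the paper's: the measure $\lambda=(s,\mathrm{id})_\ast\nu$ carried by $Z=\{(h,x):\underline{\varphi}(h)\le x\le\varphi(h)\}$, the Ces\`aro limit $\rho$ with $(p_2)_\ast\rho=\nu$, the closedness of $\{(h,x):x\le\varphi(h)\}$ (using that $W$ is closed), and the portmanteau argument that trades $W^*$ for $\operatorname{int}W^*$ via unique ergodicity of $(H,R)$ and $m_{H^*}(\partial W^*)=0$ all check out, and $M_H=p_2$ on $Z$ is correct. Two citable facts are asserted rather than proved --- that regularity of $\eta^*$ is equivalent to $m_{H^*}(\partial W^*)=0$ (available in \cite{MR3947636}), and one technical quibble: $V(x)$ is a $G_\delta$ rather than closed, so Kuratowski--Ryll-Nardzewski does not apply directly and you need Jankov--von Neumann or Arsenin--Kunugui selection (the paper itself resorts to the latter for an analogous step).

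The genuine gap is your ``first main step'', i.e.\ the claim that $\nu([\underline{\varphi},\varphi])=1$. You assert that for $\nu$-a.e.\ $x$ the set $\{h^*\in H^*:\varphi^*(h^*)\le x\}$ is a singleton, but the parenthetical justification addresses at most uniqueness; \emph{existence} of such an $h^*$ is exactly the non-trivial content. For $x=\lim\varphi(h_k)$ with $h_k\to h$ one gets $\varphi_{\operatorname{int}W}(h)\le x\le\varphi(h)$ for free, but $\underline{\varphi}(h)=\varphi_{\overline{\operatorname{int}W}}(h)$ may exceed $x$ on $\{n:h+\Delta(n)\in\overline{\operatorname{int}W}\setminus\operatorname{int}W\}$, and an arbitrary invariant measure $\nu$ on $X_\varphi$ has no a priori relation to $m_H$ that would make this discrepancy $\nu$-negligible --- that is the whole difficulty. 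The subsequent lifting is also broken: choosing the residues $h_b$ independently for each $b\in\mathscr{B}$ does not in general produce an element of $H=\overline{\Delta(\Z)}$, which is a proper subgroup of $G$ cut out by compatibility conditions among coordinates. The statement you are trying to prove here is precisely \eqref{etaregular}, which the paper simply imports from Remark~1.4 of \cite{MR4280951}; citing it closes your argument. Alternatively, your own portmanteau step suggests a self-contained repair: select $s(x)$ as a limit of witnesses $h_k$ with $\varphi(h_k)\to x$, so that $\varphi_{\operatorname{int}W}(s(x))\le x\le\varphi(s(x))$ holds everywhere (the set of such $h$ is closed and nonempty, so Kuratowski--Ryll-Nardzewski now does apply); since $\Gamma_{H,H^*}^{-1}(\operatorname{int}W^*)$ is an open subset of the closed set $W$ and hence lies in $\operatorname{int}W$ by \eqref{mapsto}, the whole orbit of $(s(x),x)$ avoids the open set $\{\Gamma_{H,H^*}(h)\in\operatorname{int}W^*,\ x(0)=0\}$, and your $m_{H^*}(\partial W^*)=0$ argument then yields $\rho(Z)=1$ without ever knowing that $x$ itself lies in $[\underline{\varphi},\varphi]$. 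Either way the resulting proof is a different route from the paper's, which goes through quasi-generic points for the joining $\nu_{\eta^*}\triangle\nu_\eta$ (Lemma~\ref{prop:PhiMHGeneric}) and the periodic approximations $\underline{\eta}_K\le\eta^*\le\eta\le\eta_K$, and in exchange obtains the much stronger Theorem~\ref{glowne} (the explicit description $\mathcal{P}(X_\varphi)=\{(M_H)_\ast(m_H\vee\kappa)\}$ together with $\mathcal{P}(X_\eta)=\mathcal{P}(X_\varphi)$), whereas your argument delivers only the bare existence statement of the conjecture.
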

Notice that each $\rho\in\mathcal{P}(H\times \{0,1\}^\Z,R\times \sigma)$ is a joining of $m_H$ with some measure $\kappa\in\mathcal{P}(\{0,1\}^\Z)$, i.e.\ $\rho=m_H\vee\kappa$. Our motivation for writing this paper was to prove the above conjecture. In fact, we will prove  not only that all $\sigma$-invariant measures on $X_\varphi$ are of the form $(M_H)_\ast(m_H\vee\kappa)$, but also that the opposite inclusion holds and that $\mathcal{P}(X_\eta)=\mathcal{P}(X_\varphi)$. Thus, we not only settle Keller's conjecture, but also answer his question from~\cite{MR4280951} about the existence of invariant measures supported on $X_{\varphi}\setminus X_\eta$ (there are no such measures). The following theorem that captures all of this is our main result.
\begin{Thx}\label{glowne}
For any $\mathscr{B}\subseteq \N$ such that $\eta^*$ is a regular Toeplitz sequence, we have
\[
\mathcal{P}(X_\eta)=\mathcal{P}(X_\varphi)=\{(M_H)_\ast(m_H\vee\kappa):\kappa\in \mathcal{P}(\{0,1\}^\Z)\}.
\]
\end{Thx}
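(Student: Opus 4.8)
The plan is to prove the cycle of inclusions
\[
K\subseteq \mathcal{P}(X_\eta)\subseteq \mathcal{P}(X_\varphi)\subseteq K,\qquad K:=\{(M_H)_\ast(m_H\vee\kappa):\kappa\in\mathcal{P}(\{0,1\}^\Z)\},
\]
which together force all three sets to coincide. The middle inclusion is free: since $\eta=\varphi(\Delta(0))\in\varphi(H)\subseteq X_\varphi$ and $X_\varphi=\overline{\varphi(H)}$ is a subshift, we have $X_\eta\subseteq X_\varphi$ and hence $\mathcal{P}(X_\eta)\subseteq\mathcal{P}(X_\varphi)$. So everything reduces to the last inclusion (Keller's conjecture: every invariant measure on $X_\varphi$ has the stated form) and the first inclusion (each $(M_H)_\ast(m_H\vee\kappa)$ is actually supported on $X_\eta$, not merely on $X_\varphi$).

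For the last inclusion I would build a lift. Consider the compact $R\times\sigma$-invariant set $\mathcal Y:=\overline{\{(h,\varphi(h)):h\in H\}}\subseteq H\times X_\varphi$. The second coordinate projection $\pi_2\colon\mathcal Y\to X_\varphi$ is a continuous surjective factor map, so any $\nu\in\mathcal{P}(X_\varphi)$ lifts to some $\tilde\nu\in\mathcal{P}(\mathcal Y)$ with $(\pi_2)_\ast\tilde\nu=\nu$; its first marginal $(\pi_1)_\ast\tilde\nu$ is an invariant measure on $(H,R)$, hence equals $m_H$, because rotation by the topological generator $\Delta(1)$ makes $(H,R)$ uniquely ergodic. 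A semicontinuity argument shows that every $(h,x)\in\mathcal Y$ satisfies $x\le\varphi(h)$: if $x(n)=1$ then the approximating net has $\varphi(h_j)(n)=1$, forcing $h_j+\Delta(n)\in W$ and, in the limit, $h+\Delta(n)\in W$. The lower bound $\underline{\varphi}(h)\le x$ can only fail when $h$ lies in $\underline{W}\setminus\operatorname{int}W$, and this is exactly where regularity of $\eta^*$ enters: using $\underline{W}=\Gamma_{H,H^*}^{-1}(W^*)$ from \eqref{mapsto} together with $(\Gamma_{H,H^*})_\ast m_H=m_{H^*}$, regularity \eqref{pozycje} forces $m_H(\underline{W}\setminus\operatorname{int}W)=0$, so $\underline{\varphi}(h)\le x\le\varphi(h)$ for $\tilde\nu$-a.e.\ $(h,x)$. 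Finally I would recode: send $(h,x)\mapsto(h,\xi)$ with $\xi(n)=x(n)$ on the free positions (where $\varphi(h)(n)=1$ and $\underline{\varphi}(h)(n)=0$) and $\xi(n)=0$ elsewhere. This map is Borel and, using $\varphi\circ R=\sigma\circ\varphi$ and $\underline{\varphi}\circ R=\sigma\circ\underline{\varphi}$, equivariant; its pushforward $\rho$ has first marginal $m_H$, so $\rho=m_H\vee\kappa$, while $M_H(h,\xi)=x$ a.e., whence $(M_H)_\ast\rho=\nu$.

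For the first inclusion it suffices, since $X_\eta$ is closed and $\rho$ has first marginal $m_H$, to show that $M_H(h,x)\in X_\eta$ for $m_H$-a.e.\ $h$ and every $x$. Here I would use the periodic approximations advertised in the abstract: writing $\sigma^k\eta=\varphi(\Delta(k))$ and exploiting the density of $\Delta(\Z)$ in $H$, one matches the forced coordinates of $M_H(h,x)$ on a window $[-N,N]$ by taking $\Delta(k)$ close to $h$, while on the finitely many free positions in that window (which correspond to window-boundary points of $h$, where an arbitrarily small perturbation of $h$ can toggle the value of $\varphi$) one fine-tunes $k$ to realise the prescribed pattern of $x$; a diagonal argument over $N$ then yields $\sigma^{k_j}\eta\to M_H(h,x)$. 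Combined with $M_H\circ(R\times\sigma)=\sigma\circ M_H$ (which already makes every $(M_H)_\ast(m_H\vee\kappa)$ shift-invariant), this places each such measure on $X_\eta$.

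The two genuinely hard points are the a.e.\ validity of the lower sandwich bound $\underline{\varphi}(h)\le x$ on $\mathcal Y$ and the free-coordinate realisation inside $X_\eta$; both rest on regularity of $\eta^*$, i.e.\ on the window boundary being $m_H$-null (equivalently, on the $s$-periodic parts of $\eta^*$ filling density $1$, cf.\ \eqref{pozycje}). I expect the free-coordinate realisation to be the main obstacle, since it requires steering finitely many independent boundary constraints simultaneously and producing the limit inside $X_\eta$ rather than merely inside the larger $X_\varphi$. (When $\mathscr{B}$ is taut the first inclusion is immediate from $X_\eta=X_\varphi$ in \eqref{j2}, so the real difficulty lives entirely in the non-taut case.)
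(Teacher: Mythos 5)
Your reduction to the two inclusions $K\subseteq\mathcal{P}(X_\eta)$ and $\mathcal{P}(X_\varphi)\subseteq K$ is the right skeleton, and your lift-and-recode argument for the second inclusion is a genuinely different (and arguably more direct) route than the paper's: the paper first proves Theorem~\ref{pomocnicze} via the periodic approximations $\underline{\eta}_K\leq\eta^*\leq\eta\leq\eta_K$, generic points for ergodic measures, and the Arsenin--Kunugui selection argument (Proposition~\ref{arsenin}), and only then converts the $N_\ast((\nu_{\eta^*}\triangle\nu_\eta)\vee\kappa)$ description into the $(M_H)_\ast(m_H\vee\kappa)$ form via the diagram \eqref{zato2} and relatively independent extensions. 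Your lift handles all invariant measures at once, at the price of the unproved claim $m_H(\underline{W}\setminus\operatorname{int}W)=0$. That claim is true under regularity of $\eta^*$, but it does not follow in one line from \eqref{mapsto} and \eqref{pozycje} as you assert (note that $m_H(\partial W)=m_H(W)-m_H(\operatorname{int}W)\geq\overline{d}-\overline{d}^*$ can be strictly positive, so ``boundaries are null'' is false for $W$ itself); one genuinely needs the periodic parts $\Per(\eta^*,1,\lcm(\mathscr{B}^*_K))$ to produce open cosets inside $W$ of total measure tending to $m_H(\underline{W})$. This is essentially the content of Keller's Remark~1.4, which the paper imports as \eqref{etaregular}, so you are re-proving a cited input rather than using it.

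The genuine gap is in the first inclusion, $K\subseteq\mathcal{P}(X_\eta)$. You propose to show that for $m_H$-a.e.\ $h$ and \emph{every} $x$ one has $M_H(h,x)\in X_\eta$ by taking $\Delta(k)$ close to $h$ and ``fine-tuning $k$ to realise the prescribed pattern'' on the free positions. The free positions $n$ are exactly those with $h+\Delta(n)\in W\setminus\underline{W}\subseteq\partial W$, and realising the value $1$ there requires finding $k$ with $\Delta(k)$ close to $h$ \emph{and} $k+n\in\mathcal{F}_\mathscr{B}$ simultaneously for the chosen positions; density of $\Delta(\Z)$ in $H$ gives no control over this arithmetic condition, and when $\mathscr{B}$ is not taut (precisely the case you identify as the hard one) the statement you are after is equivalent to the nontrivial inclusion $X_{\eta'}\subseteq X_\eta$ together with Keller's $X_{\eta'}=X_{\varphi'}=\overline{[\underline{\varphi'},\varphi']}$ for the taut set $\mathscr{B}'$ --- none of which your sketch supplies. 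The paper avoids any pointwise realisation: it shows that each measure $N_\ast((\nu_{\eta^*}\triangle\nu_\eta)\vee\kappa)$ admits a quasi-generic point of the form $N(\eta^*,\eta',y)$ by lifting the quasi-generic pair $(\eta^*,\eta')$ through the joining (Theorem~\ref{downar}), and this point lies in $[\eta^*,\eta']\subseteq X_{\varphi'}=X_{\eta'}\subseteq X_\eta$ by Proposition~\ref{between_etas} and \eqref{j3}, so the measure is automatically supported on $X_\eta$. Without the tautification $\mathscr{B}'$ and the quasi-generic-point machinery (or some substitute for them), your plan for this inclusion does not close, and it is exactly the step you flagged as the main obstacle.
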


An auxiliary result, used to prove Theorem~\ref{glowne}, but also interesting on its own, is another description of the set $\mathcal{P}(X_\eta)=\mathcal{P}(X_\varphi)$.
\begin{Thx}[cf.~\eqref{opisek}]\label{pomocnicze}
For any $\mathscr{B}\subseteq \N$ such that $\eta^*$ is a regular Toeplitz sequence, we have
\[
\mathcal{P}(X_\eta)=\mathcal{P}(X_\varphi)=\{N_\ast((\nu_{\eta*}\triangle\nu_\eta)\vee\kappa) : \kappa\in\mathcal{P}(\{0,1\}^{\Z})\},
\]
where $N\colon (\{0,1\}^{\Z})^3\to \{0,1\}^{\Z}$ is the map given by $N(w,x,y)=w+y\cdot(x-w)$ and $\nu_{\eta^*}\triangle\nu_\eta$ is the joining of $\nu_{\eta^*}$ with $\nu_\eta$ for which the pair $(\eta^*,\eta)$ is quasi-generic along any sequence~$(\ell_i)$ realizing the lower density of $\mathcal{M}_\mathscr{B}$.
\end{Thx}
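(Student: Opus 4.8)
The plan is to read the three-coordinate map $N(w,x,y)=w+y\cdot(x-w)$ as the ``framed'' analogue of the coordinatewise multiplication $M$ from \eqref{opisek}: the first argument $w$ will play the role of the lower envelope $\underline{\varphi}(h)$, the second argument $x$ the role of the upper envelope $\varphi(h)$, and $y$ is a mask that, at every coordinate where $\underline{\varphi}(h)<\varphi(h)$, decides whether the free position is filled. Since $N\circ\sigma^{\times 3}=\sigma\circ N$, every measure $N_\ast((\nu_{\eta^*}\triangle\nu_\eta)\vee\kappa)$ is automatically $\sigma$-invariant, so the whole content of the theorem is that these measures are exactly the invariant ones and that they all live on $X_\eta=X_\varphi$ (recall $X_\eta\subseteq X_\varphi$, so only the reverse inclusion of measure sets is at stake).

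First I would pin down the joining $\nu_{\eta^*}\triangle\nu_\eta$. Using $\eta=\varphi(\Delta(0))$, $\eta^*=\varphi^*(\Delta^*(0))$ and \eqref{eqn:PhiGamma}, one has $\sigma^\ell(\eta^*,\eta)=(\underline{\varphi}(\Delta(\ell)),\varphi(\Delta(\ell)))$, so the empirical measures along $(\ell_i)$ are the images of $\frac{1}{\ell_i}\sum_{\ell\le\ell_i}\delta_{\Delta(\ell)}$ under the map $h\mapsto(\underline{\varphi}(h),\varphi(h))$. Because $(H,R)$ is uniquely ergodic these averages converge to $m_H$; pushing through the (only a.e.-continuous) framing map along a sequence realizing the lower density of $\mathcal{M}_\mathscr{B}$ kills the window-boundary contributions exactly as in the quasi-genericity of $\eta$ for $\nu_\eta$. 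This simultaneously shows the quasi-generic limit exists (so $\triangle$ is well defined) and identifies it as the natural joining $\nu_{\eta^*}\triangle\nu_\eta=(h\mapsto(\underline{\varphi}(h),\varphi(h)))_\ast m_H$; in particular, for a.e.\ $(w,x)$ one has $(w,x)=(\underline{\varphi}(h),\varphi(h))$ for some $h\in H$. The inclusion ``$\supseteq$'' then follows: for such $(w,x)$ and any $y$, $N(w,x,y)=\underline{\varphi}(h)+y\cdot(\varphi(h)-\underline{\varphi}(h))\in[\underline{\varphi},\varphi]\subseteq X_\varphi$, whence $N_\ast((\nu_{\eta^*}\triangle\nu_\eta)\vee\kappa)\in\mathcal{P}(X_\varphi)$, and the same computation will place it in $\mathcal{P}(X_\eta)$ once $\mathcal{P}(X_\eta)=\mathcal{P}(X_\varphi)$ is established.

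For the reverse inclusion and the identity $\mathcal{P}(X_\eta)=\mathcal{P}(X_\varphi)$, I would build a Borel, $\sigma$-to-$R$ equivariant frame map $\pi\colon X_\varphi\to H$, defined almost everywhere with respect to every invariant measure, satisfying $\underline{\varphi}(\pi(x))\le x\le\varphi(\pi(x))$. Given $\pi$, unique ergodicity of $(H,R)$ forces $\pi_\ast\nu=m_H$ for every $\nu\in\mathcal{P}(X_\varphi)$, so the law of the frame $(\underline{\varphi}(\pi(x)),\varphi(\pi(x)))$ is always $\nu_{\eta^*}\triangle\nu_\eta$, independently of $\nu$. One then reads off the mask (e.g.\ $y(n)=x(n)$ where $\underline{\varphi}(\pi(x))(n)<\varphi(\pi(x))(n)$ and $y(n)=0$ otherwise), sets $S(x)=(\underline{\varphi}(\pi(x)),\varphi(\pi(x)),y)$, and checks $N\circ S=\mathrm{id}$ a.e. Then $S_\ast\nu$ is a joining whose first two marginals give $\nu_{\eta^*}\triangle\nu_\eta$ and whose third marginal is some $\kappa\in\mathcal{P}(\{0,1\}^{\Z})$, so that $\nu=N_\ast(S_\ast\nu)=N_\ast((\nu_{\eta^*}\triangle\nu_\eta)\vee\kappa)$. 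Since the reconstructed point $N(S(x))$ lies in $X_\eta$ a.e., the same argument shows that no invariant measure charges $X_\varphi\setminus X_\eta$, yielding $\mathcal{P}(X_\eta)=\mathcal{P}(X_\varphi)$.

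The hard part will be the construction of $\pi$ and, above all, the lower framing $\underline{\varphi}(\pi(x))\le x$ a.e.: a point of $X_\varphi$ may miss several residue classes modulo a given $b$ (so $\theta$ need not even be defined on it), yet one must still recover $h$ and certify that the forced $1$'s of $\underline{\varphi}(h)=\varphi^*(\Gamma_{H,H^*}(h))$ genuinely appear in $x$. This is exactly where the regularity of $\eta^*$ enters: it supplies periodic approximations of $\underline{\varphi}(h)$ of density tending to $1$, which pinch $x$ from below by periodic patterns and let one both detect $h$ reliably and verify $\underline{\varphi}(\pi(x))\le x$ off a null set. I expect the checks that $N\circ S=\mathrm{id}$ and that $N(S(x))\in X_\eta$ a.e.\ (the latter giving $\mathcal{P}(X_\eta)=\mathcal{P}(X_\varphi)$) to rest on the same periodic-approximation estimates, and the overall architecture to parallel the proof of \eqref{opisek} in \cite{MR3356811,MR3803141}, with $M$ replaced by the frame-preserving map $N$.
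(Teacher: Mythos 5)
There is a genuine gap, and it sits at the one place your proposal waves its hands twice: nothing in your argument ever puts the measures on $X_\eta$ rather than on the a priori larger set $X_\varphi$. Your ``$\supseteq$'' direction only shows $N_\ast((\nu_{\eta^*}\triangle\nu_\eta)\vee\kappa)$ is concentrated on $[\underline{\varphi},\varphi]\subseteq X_\varphi$, and you defer the rest to the equality $\mathcal{P}(X_\eta)=\mathcal{P}(X_\varphi)$; but your proof of that equality is the claim that ``the same argument shows that no invariant measure charges $X_\varphi\setminus X_\eta$'', and the same argument shows no such thing --- reconstructing $x=N(S(x))$ only certifies $x\in[\underline{\varphi},\varphi]$, and by Proposition~\ref{between_etas} one merely has $X_\eta\subseteq\overline{[\eta^*,\eta]}\subseteq X_\varphi$, with possibly strict inclusions for non-taut $\mathscr{B}$. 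The paper closes this loop by a detour through the tautification $\mathscr{B}'$ that is entirely absent from your proposal: since $\eta$ and $\eta'$ differ along $(\ell_i)$ on a density-zero set (\eqref{jeszczeto}), the pair $(\eta^*,\eta')$ is also quasi-generic for $\nu_{\eta^*}\triangle\nu_\eta$ (\eqref{dlatej}); lifting via Theorem~\ref{downar} gives a quasi-generic point $(\eta^*,\eta',y)$ for $(\nu_{\eta^*}\triangle\nu_\eta)\vee\kappa$, and then $z=N(\eta^*,\eta',y)$ satisfies $\eta^*\le z\le\eta'$, so $X_z\subseteq\overline{[\eta^*,\eta']}=X_{\varphi'}=X_{\eta'}\subseteq X_\eta$ because $\mathscr{B}'$ \emph{is} taut. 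This single step is what makes both inclusions of \eqref{zato1} --- and hence the chain \eqref{a} --- snap shut; without it the theorem does not follow.

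For the other inclusion your route is genuinely different from the paper's and carries its own unresolved weight. You propose a Borel, equivariant frame map $\pi\colon X_\varphi\to H$ with $\underline{\varphi}(\pi(x))\le x\le\varphi(\pi(x))$ a.e.\ for every invariant measure; the existence of \emph{some} such $h$ a.e.\ is exactly Keller's Remark~1.4 (i.e.\ \eqref{etaregular}), and on top of that you need a measurable, $\sigma$-to-$R$ equivariant selection of $h$ (which is not unique when $x$ misses several residue classes modulo some $b$). You correctly flag this as the hard part but give no construction. The paper sidesteps the frame map entirely: it takes \eqref{etaregular} as input, picks for each ergodic $\nu$ a generic point $u_K$ in the \emph{closed} periodic subshift $[\underline{\varphi}_K,\varphi_K]$ (Lemma~\ref{propo9}), normalizes so that $\underline{\eta}_K\le u_K\le\eta_K$, writes $u_K=N(\underline{\eta}_K,\eta_K,y_K)$, and passes to the limit in $K$ using Lemma~\ref{lm13} and Proposition~\ref{pr12}, with Proposition~\ref{arsenin} (Arsenin--Kunugui) handling the passage from ergodic to general invariant measures. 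If you want to pursue the frame-map route you must supply the selection argument; as written, both halves of your proof have load-bearing steps that are asserted rather than proved.
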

\begin{remark}
Note that it is non-trivial that $(\eta^*,\eta)$ is quasi-generic under $\sigma\times\sigma$ along $(\ell_i)$ realizing the  lower density of $\mathcal{M}_\mathscr{B}$ -- this will be shown in course of the proof of Theorem~\ref{pomocnicze}. In fact, we will describe the limit measure, see Lemma~\ref{prop:PhiMHGeneric}. Notice also that once a pair $(x,y)\in\{0,1\}^\Z$ is quasi-generic under $\sigma\times\sigma$ for some measure $\rho$ then $\rho$ is $(\sigma\times\sigma)$-invariant. Moreover, $x$ and $y$  are quasi-generic (along the same subsequence) for the marginals of $\rho$ and thus, $\rho$ is joining of its marginals.
\end{remark}

\begin{Thx}[cf.\ \eqref{rownosc}, recall also~\eqref{j3}]\label{wnioC}
For any $\mathscr{B}\subseteq\N$ such that $\eta^*$ is a regular Toeplitz sequence, we have $\mathcal{P}(X_\eta)=\mathcal{P}(X_{\eta'})$.
\end{Thx}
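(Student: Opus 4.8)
The plan is to read off both $\mathcal{P}(X_\eta)$ and $\mathcal{P}(X_{\eta'})$ from Theorem~\ref{pomocnicze} and check that the two descriptions coincide term by term. First note that $\mathscr{B}'$ is taut with $\nu_{\eta'}=\nu_\eta$ by definition, while $(\mathscr{B}')^*=\mathscr{B}^*$ by~\eqref{zale}; in particular $(\eta')^*=\eta^*$ is a regular Toeplitz sequence, so Theorem~\ref{pomocnicze} applies to $\mathscr{B}'$ as well. It yields
\[
\mathcal{P}(X_\eta)=\{N_\ast((\nu_{\eta^*}\triangle\nu_\eta)\vee\kappa):\kappa\in\mathcal{P}(\{0,1\}^\Z)\},\quad
\mathcal{P}(X_{\eta'})=\{N_\ast((\nu_{(\eta')^*}\triangle\nu_{\eta'})\vee\kappa):\kappa\in\mathcal{P}(\{0,1\}^\Z)\}.
\]
Since $N$ is the same map and $\nu_{(\eta')^*}=\nu_{\eta^*}$, $\nu_{\eta'}=\nu_\eta$, it suffices to prove that the two joinings agree, i.e.\ $\nu_{\eta^*}\triangle\nu_\eta=\nu_{(\eta')^*}\triangle\nu_{\eta'}$.

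To this end I would compare the quasi-generic sequences defining the two joinings. Fix a sequence $(\ell_i)$ realizing the lower density of $\mathcal{M}_{\mathscr{B}'}$. Because $\nu_\eta=\nu_{\eta'}$, evaluating both Mirsky measures on the cylinder $\{x:x(0)=1\}$ gives $\boldsymbol{\delta}(\mathcal{M}_\mathscr{B})=\boldsymbol{\delta}(\mathcal{M}_{\mathscr{B}'})$, which together with~\eqref{DE} yields $\underline{d}(\mathcal{M}_\mathscr{B})=\underline{d}(\mathcal{M}_{\mathscr{B}'})$. Since $\mathcal{M}_\mathscr{B}\subseteq\mathcal{M}_{\mathscr{B}'}$,
\[
\underline{d}(\mathcal{M}_\mathscr{B})\le\liminf_i\tfrac{1}{\ell_i}|\mathcal{M}_\mathscr{B}\cap[1,\ell_i]|\le\limsup_i\tfrac{1}{\ell_i}|\mathcal{M}_\mathscr{B}\cap[1,\ell_i]|\le\lim_i\tfrac{1}{\ell_i}|\mathcal{M}_{\mathscr{B}'}\cap[1,\ell_i]|=\underline{d}(\mathcal{M}_{\mathscr{B}'})=\underline{d}(\mathcal{M}_\mathscr{B}),
\]
so $(\ell_i)$ also realizes the lower density of $\mathcal{M}_\mathscr{B}$, and moreover $\tfrac{1}{\ell_i}\bigl|(\mathcal{M}_{\mathscr{B}'}\setminus\mathcal{M}_\mathscr{B})\cap[1,\ell_i]\bigr|\to 0$. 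As $\eta'\le\eta$ with $\eta-\eta'=\mathbf{1}_{\mathcal{M}_{\mathscr{B}'}\setminus\mathcal{M}_\mathscr{B}}$, this says exactly that the positions where $\eta$ and $\eta'$ differ are negligible along $(\ell_i)$.

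Now by Theorem~\ref{pomocnicze} (and the remark following it) the pair $(\eta^*,\eta)$ is quasi-generic along $(\ell_i)$ for $\nu_{\eta^*}\triangle\nu_\eta$, since $(\ell_i)$ realizes the lower density of $\mathcal{M}_\mathscr{B}$; likewise $((\eta')^*,\eta')=(\eta^*,\eta')$ is quasi-generic along $(\ell_i)$ for $\nu_{(\eta')^*}\triangle\nu_{\eta'}$, since $(\ell_i)$ realizes the lower density of $\mathcal{M}_{\mathscr{B}'}$. For any cylinder function $f$ depending on coordinates in a fixed window, $f(\sigma^\ell(\eta^*,\eta))$ and $f(\sigma^\ell(\eta^*,\eta'))$ can disagree only for those $\ell\in[1,\ell_i]$ at which $\eta$ and $\eta'$ differ somewhere in that window; the number of such $\ell$ is bounded by a constant times $|(\mathcal{M}_{\mathscr{B}'}\setminus\mathcal{M}_\mathscr{B})\cap[1,\ell_i+O(1)]|$, hence of density $0$ along $(\ell_i)$. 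Thus the two empirical-measure sequences share the same weak-$*$ limit, giving $\nu_{\eta^*}\triangle\nu_\eta=\nu_{(\eta')^*}\triangle\nu_{\eta'}$, and substituting this into the two displays above proves $\mathcal{P}(X_\eta)=\mathcal{P}(X_{\eta'})$.

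The inclusion $\mathcal{P}(X_{\eta'})\subseteq\mathcal{P}(X_\eta)$ is in any case immediate from $X_{\eta'}\subseteq X_\eta$ (see~\eqref{j3}), so the only real content is the reverse inclusion, which the argument above delivers. The main obstacle I anticipate is precisely the verification that the two joinings coincide: justifying that $\eta$ and $\eta'$ differ only on a set negligible along a sequence realizing the lower density of $\mathcal{M}_{\mathscr{B}'}$, and that this negligibility passes to the weak-$*$ limits of the empirical measures. Everything else is bookkeeping resting on the already-established identities $(\mathscr{B}')^*=\mathscr{B}^*$ and $\nu_{\eta'}=\nu_\eta$ together with Theorem~\ref{pomocnicze}.
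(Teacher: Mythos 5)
Your argument is correct, and it takes the route that the paper only mentions in passing (deriving Theorem~\ref{wnioC} as a consequence of Theorem~\ref{pomocnicze}) rather than the one it actually carries out. In the paper the two theorems are proved simultaneously by showing that \eqref{diagBC} is a commutative surjective dynamical diagram: the chain $\mathcal{P}(X_{\eta'})\subseteq\mathcal{P}(X_\eta)\subseteq\mathcal{P}([\underline{\varphi},\varphi])$ is closed into a loop by showing that every measure $N_\ast((\nu_{\eta^*}\triangle\nu_\eta)\vee\kappa)$ has a quasi-generic point of the form $z=N(\eta^*,\eta',y)$ with $\eta^*\leq z\leq\eta'$, hence is supported on $\overline{[\eta^*,\eta']}=X_{\varphi'}=X_{\eta'}$. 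You instead apply Theorem~\ref{pomocnicze} twice --- to $\mathscr{B}$ and to $\mathscr{B}'$, which is legitimate since $(\mathscr{B}')^*=\mathscr{B}^*$ by \eqref{zale}, so $(\eta')^*=\eta^*$ is again regular --- and match the two resulting descriptions by identifying the joinings $\nu_{\eta^*}\triangle\nu_\eta$ and $\nu_{(\eta')^*}\triangle\nu_{\eta'}$. That identification rests on exactly the same ingredient as the paper's step \eqref{dlatej}: that $\eta$ and $\eta'$ differ on a set of density zero along any sequence realizing the (common) lower density of $\mathcal{M}_{\mathscr{B}'}$ and $\mathcal{M}_\mathscr{B}$. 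The paper imports this as \eqref{jeszczeto} from an earlier reference, whereas you re-derive it from $\nu_\eta=\nu_{\eta'}$ together with \eqref{DE} and the inclusion $\mathcal{M}_\mathscr{B}\subseteq\mathcal{M}_{\mathscr{B}'}$, which is a nice self-contained touch. What your version buys is a cleaner logical separation: Theorem~\ref{wnioC} becomes a formal corollary of Theorem~\ref{pomocnicze} plus the coincidence of the joinings, with the only analytic work being the comparison of empirical measures along density-zero perturbations. What the paper's combined version buys is that it never needs to invoke Theorem~\ref{pomocnicze} for $\mathscr{B}'$ as a separate input, since the inclusion of the $N_\ast$-measures into $\mathcal{P}(X_{\eta'})$ is established directly from the quasi-generic point; this also lets the two theorems share the lifting argument (Theorem~\ref{downar}) and the periodic-approximation step. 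I see no gap in your proposal.
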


\subsubsection{Main results: tautness and combinatorics}
\begin{Propx}[cf.~\eqref{AA}]\label{NNN}
Let $\mathscr{B}\subseteq\mathbb{N}$. Suppose that $\mathscr{C}\subseteq \N$ is taut. Then the following are equivalent:
\begin{paracol}{2}
\begin{enumerate}[(a)]
\item $(\forall_{b\in\mathscr{B}} \exists_{c\in\mathscr{C}}$\ $c\divides b)$ and $(\forall_{c\in\mathscr{C}}\exists_{b^*\in\mathscr{B}^*}\ b^*\divides c)$,
\item $\eta^*\leq \eta_\mathscr{C}\leq \eta$,
\item $X_{\eta_\mathscr{C}}\subseteq X_\eta$,
\item $\eta_\mathscr{C}\in X_\eta$,
\item $\nu_{\eta_\mathscr{C}}\in\mathcal{P}(X_\eta)$,
\item $\mathcal{P}(X_{\eta_\mathscr{C}})\subseteq \mathcal{P}(X_\eta)$,
\end{enumerate}
\switchcolumn
\begin{enumerate}[(a')]
\item $(\forall_{b'\in\mathscr{B}'} \exists_{c\in\mathscr{C}}$\ $c\divides b')$ and $(\forall_{c\in\mathscr{C}}\exists_{b^*\in\mathscr{B}^*}\ b^*\divides c)$,
\item $\eta^*\leq \eta_\mathscr{C}\leq \eta'$,
\item $X_{\eta_\mathscr{C}}\subseteq X_{\eta'}$,
\item $\eta_\mathscr{C}\in X_{\eta'}$,
\item $\nu_{\eta_\mathscr{C}}\in\mathcal{P}(X_{\eta'})$,
\item $\mathcal{P}(X_{\eta_\mathscr{C}})\subseteq \mathcal{P}(X_{\eta'})$. 
\end{enumerate}
\end{paracol}
\end{Propx}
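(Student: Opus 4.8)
The plan is to prove that the six conditions in the left column are equivalent for an arbitrary $\mathscr{B}\subseteq\N$ (with the standing assumption that $\eta^*$ is a regular Toeplitz sequence), and then to obtain the right column by a substitution together with one bridging step. Since $(\mathscr{B}')^*=\mathscr{B}^*$ by~\eqref{zale} and $\mathscr{B}'$ is taut (so $(\mathscr{B}')'=\mathscr{B}'$), the conditions (a')--(f') are literally (a)--(f) written for $\mathscr{B}'$ in place of $\mathscr{B}$; moreover $\eta_{(\mathscr{B}')^*}=\eta^*$ is still regular Toeplitz, so the left-column result applies verbatim to $\mathscr{B}'$. To merge the two columns I would invoke Theorem~\ref{wnioC}: since $\mathcal{P}(X_\eta)=\mathcal{P}(X_{\eta'})$, condition (e) and condition (e') coincide, and this single identification fuses the two blocks of equivalences into one.

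I would organise the left column around the cycle (a)$\Leftrightarrow$(b)$\Rightarrow$(e)$\Rightarrow$(a), attaching the membership conditions (c), (d), (f) afterwards. The equivalence (a)$\Leftrightarrow$(b) is purely combinatorial: $\eta_\mathscr{C}\le\eta$ means $\mathcal{M}_\mathscr{B}\subseteq\mathcal{M}_\mathscr{C}$, i.e.\ $\forall_{b\in\mathscr{B}}\exists_{c\in\mathscr{C}}\ c\mid b$, while $\eta^*\le\eta_\mathscr{C}$ means $\mathcal{M}_\mathscr{C}\subseteq\mathcal{M}_{\mathscr{B}^*}$, i.e.\ $\forall_{c\in\mathscr{C}}\exists_{b^*\in\mathscr{B}^*}\ b^*\mid c$. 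For (b)$\Rightarrow$(e) I would use that $\eta^*=\underline{\varphi}(\Delta(0))$ and $\eta=\varphi(\Delta(0))$ (by~\eqref{eqn:PhiGamma}), so the sandwich in (b) says precisely that $\eta_\mathscr{C}\in[\underline{\varphi},\varphi]\subseteq X_\varphi$ (Theorem~1.1 in~\cite{MR4280951}, assuming $\mathscr{B}$ primitive as we may). Hence $X_{\eta_\mathscr{C}}\subseteq X_\varphi$ and therefore $\nu_{\eta_\mathscr{C}}\in\mathcal{P}(X_{\eta_\mathscr{C}})\subseteq\mathcal{P}(X_\varphi)=\mathcal{P}(X_\eta)$ by Theorem~\ref{glowne}, which is (e). The remaining links are formal: (c)$\Leftrightarrow$(d) since $X_{\eta_\mathscr{C}}$ is the orbit closure of $\eta_\mathscr{C}$ and $X_\eta$ is closed and shift-invariant; and (c)$\Rightarrow$(f)$\Rightarrow$(e) together with (c)$\Rightarrow$(e), because a subshift inclusion pushes forward to an inclusion of invariant measures and $\nu_{\eta_\mathscr{C}}\in\mathcal{P}(X_{\eta_\mathscr{C}})$.

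For (e)$\Rightarrow$(a) I would recover the two divisibility conditions separately. The first one follows from the hereditary theory: as $X_\eta\subseteq\widetilde{X}_\eta$ we have $\nu_{\eta_\mathscr{C}}\in\mathcal{P}(\widetilde{X}_\eta)$, which by~\eqref{AA} is equivalent to $\forall_{b\in\mathscr{B}}\exists_{c\in\mathscr{C}}\ c\mid b$. The second one I would extract through minimal subshifts: $\supp\nu_{\eta_\mathscr{C}}$ is a non-empty closed invariant subset of $X_{\eta_\mathscr{C}}$, hence contains the unique minimal subshift $X_{\eta_{\mathscr{C}^*}}$ of $X_{\eta_\mathscr{C}}$; since $\supp\nu_{\eta_\mathscr{C}}\subseteq X_\eta$, this realizes $X_{\eta_{\mathscr{C}^*}}$ as a minimal subshift of $X_\eta$, which must equal the unique minimal subshift $X_{\eta^*}$. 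As $\mathscr{C}^*$ and $\mathscr{B}^*$ are both taut, Theorem~L of~\cite{MR3803141} gives $\mathscr{C}^*=\mathscr{B}^*$, whence $\mathcal{M}_\mathscr{C}\subseteq\mathcal{M}_{\mathscr{C}^*}=\mathcal{M}_{\mathscr{B}^*}$, i.e.\ $\forall_{c\in\mathscr{C}}\exists_{b^*\in\mathscr{B}^*}\ b^*\mid c$. This establishes (a)$\Leftrightarrow$(b)$\Leftrightarrow$(e) and, with (c)$\Leftrightarrow$(d) and (c)$\Rightarrow$(f)$\Rightarrow$(e), leaves exactly one implication.

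The main obstacle is (b)$\Rightarrow$(d): the sandwich $\eta^*\le\eta_\mathscr{C}\le\eta$ must force $\eta_\mathscr{C}$ into the orbit closure $X_\eta$ of $\eta$. The subtlety is that measure-level membership does \emph{not} suffice, since a priori $X_\eta\subsetneq X_\varphi$, so knowing $\eta_\mathscr{C}\in X_\varphi$ is not enough. I would prove this by a periodic-approximation argument that realizes each central block $\eta_\mathscr{C}|_{[-N,N]}$ as a genuine block of $\eta$. Fixing $N$ and a large cut-off $K$, I would use the Chinese Remainder Theorem to choose a shift $k$ so that, on the window, every $0$ of $\eta_\mathscr{C}$ (a position in $\mathcal{M}_\mathscr{C}\subseteq\mathcal{M}_{\mathscr{B}^*}$) is matched by an honest multiple of some $b\in\mathscr{B}$ at $n+k$, while every $1$ of $\eta_\mathscr{C}$ avoids all small divisors; the inclusions $\mathcal{M}_\mathscr{B}\subseteq\mathcal{M}_\mathscr{C}\subseteq\mathcal{M}_{\mathscr{B}^*}$ dictate which residues must be hit. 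The delicate point is to keep the $1$-positions free of \emph{all} $b\in\mathscr{B}$, including $b>K$: this is exactly where regularity of $\eta^*$ is indispensable, since it confines the positions unresolved at level $K$ to a set of density tending to $0$, so that letting $K\to\infty$ and passing to a subsequential limit exhibits $\eta_\mathscr{C}$ as a limit of shifts of $\eta$. Once (b)$\Rightarrow$(d) is secured, (d)$\Rightarrow$(c)$\Rightarrow$(f) folds the membership conditions into the cycle, all of (a)--(f) become equivalent, and the right column and the cross-column equivalences follow as above.
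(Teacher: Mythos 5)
There is a genuine gap, and it comes in two parts. First, your proof is built on a standing hypothesis that $\eta^*$ is a regular Toeplitz sequence, but Proposition~\ref{NNN} carries no such assumption: it is stated for arbitrary $\mathscr{B}\subseteq\N$ and taut $\mathscr{C}$. Your argument leans on Theorem~\ref{glowne} (for $\mathcal{P}(X_\varphi)=\mathcal{P}(X_\eta)$ in (b)$\Rightarrow$(e)) and on Theorem~\ref{wnioC} (to identify (e) with (e') and fuse the two columns), and both of those are only available under regularity. Even if every step you sketch were completed, you would have proved a strictly weaker statement. The paper's proof uses none of this machinery: the two columns are merged by showing (b)$\Rightarrow$(b') via a double application of~\eqref{AA} (from $\eta_\mathscr{C}\leq\eta$ one gets $\nu_{\eta_\mathscr{C}}\in\mathcal{P}(\widetilde{X}_\eta)=\mathcal{P}(\widetilde{X}_{\eta'})$ by~\eqref{rownosc}, and then $\eta_\mathscr{C}\leq\eta'$), together with the trivial (d')$\Rightarrow$(d) from $X_{\eta'}\subseteq X_\eta$ --- all of which holds for every $\mathscr{B}$.

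Second, the implication you yourself identify as the main obstacle, (b)$\Rightarrow$(d), is left as a sketch of a Chinese-Remainder/periodic-approximation construction, and your diagnosis of where the difficulty lies is off. The paper never proves $\eta^*\leq\eta_\mathscr{C}\leq\eta\Rightarrow\eta_\mathscr{C}\in X_\eta$ directly; it proves the primed version (b')$\Rightarrow$(d'), which is immediate from Proposition~\ref{between_etas}: since $\mathscr{B}'$ is taut, $X_{\eta'}=\overline{[\eta^*,\eta']}$ (Keller's Corollary~1.2 in~\cite{MR4280951}), so the sandwich $\eta^*\leq\eta_\mathscr{C}\leq\eta'$ places $\eta_\mathscr{C}$ in $X_{\eta'}$ with no further work. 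Your claim that regularity of $\eta^*$ is ``indispensable'' for this step is therefore incorrect --- the relevant fact is tautness, not regularity --- and your CRT sketch, if carried out, would essentially amount to reproving Keller's theorem from scratch while still not covering the non-taut $\mathscr{B}$ for which the detour through $\mathscr{B}'$ is genuinely needed (for non-taut $\mathscr{B}$ one only has $X_\eta\subseteq\overline{[\eta^*,\eta]}$, so membership in $[\eta^*,\eta]$ does not put $\eta_\mathscr{C}$ into $X_\eta$ directly). Your treatment of (a)$\Leftrightarrow$(b), of the formal implications among (c), (d), (f), (e), and of (e)$\Rightarrow$(a) via the full support of $\nu_{\eta_\mathscr{C}}$ and the uniqueness of the minimal subshift is sound and close to the paper's, but without a correct, regularity-free proof of (b)$\Rightarrow$(d) (or of (b)$\Rightarrow$(b')$\Rightarrow$(d')$\Rightarrow$(d)) the cycle does not close.
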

Given $\mathscr{B}\subseteq \mathbb{N}$, let 
\[
Taut(\mathscr{B}):=\{\mathscr{C}\subseteq \mathbb{N} : \mathscr{C}\text{ is taut and }X_{\eta_\mathscr{C}}\subseteq X_\eta\}.
\]
Consider the following partial order $\prec$ on $Taut(\mathscr{B})$:
\[
\mathscr{C}_1 \prec \mathscr{C}_2 \iff X_{\eta_{\mathscr{C}_1}}\subseteq X_{\eta_{\mathscr{C}_2}}.
\]
Clearly, $\mathscr{B}',\mathscr{B}^*\in Taut(\mathscr{B})$. Moreover, $\mathscr{B^*}$ is the smallest element of $Taut(\mathscr{B})$. Indeed, if $\mathscr{C}\in Taut(\mathscr{B})$ then $X_{\eta^*}\subseteq X_{\eta_\mathscr{C}}$ since  $X_{\eta^*}$ is the unique minimal subset of $X_\eta$. As an immediate consequence of Proposition~\ref{NNN} (more precisely, by $(c)\iff (c')$), we obtain the following.
\begin{Corx}
For any $\mathscr{B}\subseteq \mathbb{N}$, $\mathscr{B}'$ is the largest element of $Taut(\mathscr{B})$ with respect to $\prec$.
\end{Corx}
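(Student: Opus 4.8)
The plan is to deduce the claim directly from Proposition~\ref{NNN}, specifically from the equivalence $(c)\iff (c')$, exactly as the preceding remark indicates; all the substance is front-loaded into that proposition, so here I only need to verify two bookkeeping facts, namely that $\mathscr{B}'$ lies in $Taut(\mathscr{B})$ and that it dominates every element with respect to $\prec$.

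First I would check that $\mathscr{B}'\in Taut(\mathscr{B})$. Tautness of $\mathscr{B}'$ is built into the definition of the tautification, and the inclusion $X_{\eta'}\subseteq X_\eta$ is one half of~\eqref{j3}. Thus $\mathscr{B}'$ is a taut set with $X_{\eta'}\subseteq X_\eta$, so it is a legitimate element of $Taut(\mathscr{B})$ and a valid candidate for the largest element.

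Next I would verify that $\mathscr{B}'$ dominates every other element. Fix an arbitrary $\mathscr{C}\in Taut(\mathscr{B})$; by definition $\mathscr{C}$ is taut and satisfies $X_{\eta_\mathscr{C}}\subseteq X_\eta$. Since $\mathscr{C}$ is taut, Proposition~\ref{NNN} applies to it, and condition~$(c)$ holds. The equivalence $(c)\iff (c')$ then yields $X_{\eta_\mathscr{C}}\subseteq X_{\eta'}=X_{\eta_{\mathscr{B}'}}$, which is precisely $\mathscr{C}\prec \mathscr{B}'$ by the definition of $\prec$. As $\mathscr{C}$ was arbitrary, $\mathscr{B}'$ is the largest element of $Taut(\mathscr{B})$ with respect to $\prec$.

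I do not expect a genuine obstacle here: the entire weight of the argument rests on Proposition~\ref{NNN}, whose taut hypothesis is exactly the one packaged into the definition of $Taut(\mathscr{B})$. The only point deserving a moment's care is that the comparison $\mathscr{C}\prec\mathscr{B}'$ is read off in the correct direction, i.e.\ that $X_{\eta_{\mathscr{B}'}}$ is literally $X_{\eta'}$, so that condition~$(c')$ matches the defining inequality of the order.
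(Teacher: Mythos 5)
Your proposal is correct and matches the paper's argument exactly: the paper also notes that $\mathscr{B}'\in Taut(\mathscr{B})$ and then derives the corollary as an immediate consequence of the equivalence $(c)\iff(c')$ in Proposition~\ref{NNN}. Your additional care in checking that $X_{\eta_{\mathscr{B}'}}=X_{\eta'}$ and that the order is read in the right direction is harmless bookkeeping that the paper leaves implicit.
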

\subsubsection{Main results: entropy}
Last, but not least, we prove some results on the entropy of $\mathscr{B}$-free systems.

\begin{Thx}[cf.~\eqref{entro}]\label{entropia}
For any $\mathscr{B}\subseteq \N$, we have $h(X_\eta)\geq \overline{d}-\overline{d}^*$, where $\overline{d}=\overline{d}(\mathcal{F}_\mathscr{B})$ and $\overline{d}^*=\overline{d}(\mathcal{F}_{\mathscr{B}^*})$. If additionally $X_{\eta^*}$ is uniquely ergodic  (in particular if $\eta^*$ is a regular Toeplitz sequence) then $h(X_\eta)=\overline{d}-d^*$, where $d^*=d(\mathcal{F}_{\mathscr{B}^*})$.
\end{Thx}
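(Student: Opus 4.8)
The plan is to prove the asserted relation in two halves: the lower bound $h(X_\eta)\ge\overline{d}-\overline{d}^*$ for arbitrary $\mathscr{B}$, and the matching upper bound $h(X_\eta)\le\overline{d}-d^*$ under the additional hypothesis that $X_{\eta^*}$ is uniquely ergodic. Unique ergodicity forces $\eta^*$ to be generic (not merely quasi-generic) for the Mirsky measure $\nu_{\eta^*}$, so that $d^*=d(\mathcal{F}_{\mathscr{B}^*})$ exists and coincides with $\overline{d}^*$; feeding this into the two bounds yields the stated equality. Throughout, the object controlling the entropy is the ``gap set'' $\mathcal{F}_{\mathscr{B}}\setminus\mathcal{F}_{\mathscr{B}^*}=\mathcal{M}_{\mathscr{B}^*}\setminus\mathcal{M}_{\mathscr{B}}$: by \eqref{eqn:PhiGamma} we have $\underline{\varphi}(h)=\varphi^*(\Gamma_{H,H^*}(h))$, so for $m_H$-a.e.\ $h$ the supports of $\varphi(h)$ and $\underline{\varphi}(h)$ have densities equal to the respective Mirsky frequencies $\nu_\eta(\{x:x(0)=1\})=\overline{d}$ and $\nu_{\eta^*}(\{x:x(0)=1\})=\overline{d}^*$ (equalities following from \eqref{DE}). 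Hence the gap, where $\varphi(h)=1$ and $\underline{\varphi}(h)=0$, has density $\overline{d}-\overline{d}^*$, the heuristic number of ``free'' coordinates per unit length.

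For the lower bound I would count blocks directly, using $p_n(X_\eta)=p_n(\eta)$. Fix $(\ell_i)$ realising $\underline{d}(\mathcal{M}_{\mathscr{B}})$, so that $\tfrac1{\ell_i}|\mathcal{F}_{\mathscr{B}}\cap[1,\ell_i]|\to\overline{d}$; after thinning, $\tfrac1{\ell_i}|\mathcal{F}_{\mathscr{B}^*}\cap[1,\ell_i]|\to\alpha^*\le\overline{d}^*$, whence the gap set has relative density at least $\overline{d}-\overline{d}^*$ on these windows. The crucial step is to show that, up to an exceptional set of positions of density $o(1)$, every $\{0,1\}$-pattern prescribed on the gap positions of $[1,\ell_i]$ is realised by some shift of $\eta$, giving $p_{\ell_i}(\eta)\ge 2^{(\overline{d}-\overline{d}^*)\ell_i-o(\ell_i)}$. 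This is exactly where the defining feature of $\mathscr{B}^*$ enters: each gap coordinate lives at a scale $d$ for which $d\mathcal{A}\subseteq\mathscr{B}$ for some infinite pairwise coprime $\mathcal{A}$, and along such an $\mathcal{A}$ the induced subsystem is of Erd\H{o}s type and hence hereditary, so all admissible fillings of the gap genuinely occur; using only finitely many (pairwise coprime) members of each $\mathcal{A}$ one toggles the chosen gap coordinates independently via the Chinese Remainder Theorem while leaving the $\mathscr{B}^*$-skeleton — and thus membership in $X_\eta$ — intact. Passing to $\tfrac1{\ell_i}\log_2$ and using $h(X_\eta)=\lim_n\tfrac1n\log_2 p_n(\eta)$ gives $h(X_\eta)\ge\overline{d}-\overline{d}^*$.

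For the upper bound I would use the variational principle and bound $h(X_\eta,\mu)$ for each $\mu\in\mathcal{P}(X_\eta)$. On $Y\cap X_\varphi$ the reading map $\theta$ lands in $H$ (Remark~\ref{rem:CodomainY}) and intertwines $\sigma$ with $R$, so it presents $(X_\eta,\mu)$ as an extension of the factor $(H,R,\theta_*\mu)$; since $(H,R)$ is a uniquely ergodic compact group rotation, $\theta_*\mu=m_H$ and the base has zero entropy, so the Abramov--Rokhlin addition formula gives $h(X_\eta,\mu)=h_\mu(\sigma\mid\theta)$. I then bound this relative entropy by the number of free coordinates: since $x\le\varphi(\theta(x))$ always and $\underline{\varphi}(\theta(x))\le x$ holds $\mu$-a.e., conditionally on $\theta=h$ the word $x|_{[1,n]}$ is forced off the gap and free on it, so $h_\mu(\sigma\mid\theta)$ is at most the gap density, which by the first paragraph equals $\overline{d}-\overline{d}^*=\overline{d}-d^*$ under unique ergodicity. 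Hence $h(X_\eta,\mu)\le\overline{d}-d^*$ for every $\mu$, and the variational principle gives $h(X_\eta)\le\overline{d}-d^*$.

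The main obstacle is the lower sandwiching $\underline{\varphi}(\theta(x))\le x$ in the upper bound. Because $X_\varphi=\overline{[\underline{\varphi},\varphi]}$ is only the closure, and $\underline{\varphi}(\cdot)(n)$ is merely upper semicontinuous (its $1$-set is closed), limits of sandwiched points may dip below the skeleton $\underline{\varphi}(\theta(x))$; controlling this on a set of full $\mu$-measure is precisely where unique ergodicity of $X_{\eta^*}$ is needed, as the uniform convergence of ergodic averages on $X_{\eta^*}$ forces the minimal skeleton beneath $\mu$-almost every point and pins the count of free coordinates at $(\overline{d}-d^*)n+o(n)$. A secondary technical point is the a.e.-definedness of $\theta$, i.e.\ that every $\mu\in\mathcal{P}(X_\eta)$ gives full measure to $Y\cap X_\varphi$, which I would extract from the admissibility description of $X_{\mathscr{B}}$ together with the quasi-genericity of $(\eta^*,\eta)$. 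On the lower-bound side the analogous difficulty is purely combinatorial: organising the independent toggling across the different scales $d$ so that the exceptional set stays of density $o(1)$ and the constructed words remain genuine limits of shifts of $\eta$.
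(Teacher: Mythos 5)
Your skeleton (block-counting lower bound; upper bound only under unique ergodicity, which also forces $d^*=\overline{d}^*$) and your identification of the free positions as $\mathcal{F}_\mathscr{B}\setminus\mathcal{F}_{\mathscr{B}^*}$, of density $\overline{d}-\overline{d}^*$, agree with the paper. But both halves have genuine gaps. For the lower bound, the step ``every pattern on the gap positions is realised by a shift of $\eta$'' is precisely Keller's generalized heredity theorem: for taut $\mathscr{B}$ one has $X_\eta=\overline{[\eta^*,\eta]}$ (Proposition~\ref{between_etas}), so every block agreeing with $\eta[1,n]$ on $\mathcal{M}_\mathscr{B}\cup\mathcal{F}_{\mathscr{B}^*}$ occurs in $X_\eta$ and the count is immediate. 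Your plan to re-derive this by CRT toggling at scales $d\in D$ does not engage with the real difficulties: the moduli $d_ja_j$ need not be pairwise coprime, so the congruence system for the shift $t$ need not be solvable; keeping $1$'s at the untoggled positions means avoiding the classes of \emph{all} $b\in\mathscr{B}$, and the tail $\mathcal{M}_\mathscr{B}\setminus\mathcal{M}_{\mathscr{B}_K}$ can have much larger density in the shifted window $[t+1,t+\ell_i]$ than in $[1,\ell_i]$ (this is exactly why $\eta$ is only quasi-generic). Moreover, for non-taut $\mathscr{B}$ only $X_\eta\subseteq\overline{[\eta^*,\eta]}$ holds, so ``all admissible fillings occur'' is not available; the paper handles general $\mathscr{B}$ by passing to the tautification, using $X_{\eta'}\subseteq X_\eta$, $(\mathscr{B}')^*=\mathscr{B}^*$ and $\overline{d}'=\overline{d}$ --- a reduction absent from your plan.

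For the upper bound your route through the variational principle and $\theta$ fails at the first step: it is not true that every $\mu\in\mathcal{P}(X_\eta)$ gives full measure to $Y$. If $d\in D$ and $b=da\in\mathscr{B}$ with $a\geq2$, then $\supp\eta^*\subseteq\mathcal{F}_{\mathscr{B}^*}\subseteq\Z\setminus d\Z$ misses the $a\geq 2$ residue classes $0,d,\dots,(a-1)d$ modulo $b$, so $\eta^*\notin Y$ and $\nu_{\eta^*}(Y)=0$, although $\nu_{\eta^*}\in\mathcal{P}(X_\eta)$. Hence you cannot uniformly present $(X_\eta,\mu)$ as an extension of $(H,R,m_H)$ via $\theta$, and the a.e.\ sandwiching $\underline{\varphi}(\theta(x))\leq x$ --- which you flag but do not prove --- is a second unresolved point. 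The paper's upper bound avoids measures entirely: for $n$ a multiple of $\prod_{b\in\mathscr{B}_K}b$ it sandwiches $\eta^*\leq\eta\leq\eta_K$ and bounds $p_n(\eta)\leq p_n(\eta^*)\cdot p_n(\eta_K)\cdot 2^{\sup_M(|\supp\eta_K[M+1,M+n]|-|\supp\eta^*[M+1,M+n]|)}$, where unique ergodicity of $X_{\eta^*}$ enters only through the uniform estimate $|\supp\eta^*[M+1,M+n]|\geq n(d^*-\vep)$ for all $M$, and letting $K\to\infty$ with Davenport--Erd\H{o}s gives $h(X_\eta)\leq\overline{d}-d^*$. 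A measure-theoretic upper bound in your spirit does exist in the paper (the skew product $\widetilde{R}$ and the map $M_H$ in Section~\ref{wewna}), but it rests on Theorem~\ref{glowne}, which the paper keeps independent of Theorem~\ref{entropia}.
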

\begin{Corx}[cf.~\eqref{kiedytriv}]\label{zerowanie}
For any $\mathscr{B}\subseteq \N$ such that $\eta^*$ is a regular Toeplitz sequence, we have 
\[
h(X_\eta)=0 \iff \mathcal{P}(X_\eta)=\{\nu_{\eta}\}\iff X_\eta \text{ is uniquely ergodic}
\]
(note that if the above holds then $\nu_{\eta}=\nu_{\eta^*}$).
\end{Corx}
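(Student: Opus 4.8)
The plan is to deduce everything from Theorem~\ref{entropia} together with the structural description of $\mathcal{P}(X_\eta)$ in Theorem~\ref{pomocnicze}. Write $A$, $B$, $C$ for the three asserted conditions $h(X_\eta)=0$, $\mathcal{P}(X_\eta)=\{\nu_\eta\}$, and unique ergodicity, and put $[1]_0:=\{x\in\{0,1\}^\Z:x(0)=1\}$. The equivalence $B\iff C$ is immediate: $\nu_\eta\in\mathcal{P}(X_\eta)$ always, so $X_\eta$ carries exactly one invariant measure precisely when that measure is $\nu_\eta$. It therefore suffices to prove $C\Rightarrow A$ and $A\Rightarrow C$, after which the parenthetical remark $\nu_\eta=\nu_{\eta^*}$ will drop out of the argument for $C$.

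For $C\Rightarrow A$, I would note that both $\nu_\eta$ and $\nu_{\eta^*}$ lie in $\mathcal{P}(X_\eta)$: the former always, and the latter because $X_{\eta^*}\subseteq X_\eta$ (see~\eqref{j3}) forces $\mathcal{P}(X_{\eta^*})\subseteq\mathcal{P}(X_\eta)$. Unique ergodicity makes these coincide, $\nu_\eta=\nu_{\eta^*}$, so evaluating on the cylinder $[1]_0$ gives $\overline d=\nu_\eta([1]_0)=\nu_{\eta^*}([1]_0)=d^*$. Here $\nu_\eta([1]_0)=\overline d$ follows from quasi-genericity of $\eta$ along $(\ell_i)$ realizing the lower density of $\mathcal{M}_\mathscr{B}$ (the frequency of $1$'s along $(\ell_i)$ is $1-\underline d(\mathcal{M}_\mathscr{B})=\overline d$), and $\nu_{\eta^*}([1]_0)=d^*$ from the unique ergodicity of $X_{\eta^*}$ guaranteed by the regularity of $\eta^*$. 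Theorem~\ref{entropia} then yields $h(X_\eta)=\overline d-d^*=0$.

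The substantive direction is $A\Rightarrow C$, where I would use Theorem~\ref{pomocnicze}. The joining $\nu_{\eta^*}\triangle\nu_\eta$ is concentrated on $\{(w,x):w\le x\}$, because each empirical pair $(\sigma^\ell\eta^*,\sigma^\ell\eta)$ satisfies $\sigma^\ell\eta^*\le\sigma^\ell\eta$ (as $\eta^*\le\eta$) and this closed, $(\sigma\times\sigma)$-invariant condition passes to the quasi-generic limit. Consequently $x(0)-w(0)\ge 0$ a.e., and
\[
\int \bigl(x(0)-w(0)\bigr)\,d(\nu_{\eta^*}\triangle\nu_\eta)
=\nu_\eta([1]_0)-\nu_{\eta^*}([1]_0)
=\overline d-d^*
=h(X_\eta),
\]
using Theorem~\ref{entropia}. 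If $h(X_\eta)=0$ this integral vanishes, forcing $x(0)=w(0)$ $(\nu_{\eta^*}\triangle\nu_\eta)$-a.e.; by $(\sigma\times\sigma)$-invariance the same holds at every coordinate, so $x=w$ a.e. Then, for every $\kappa$, the interpolation map $N(w,x,y)=w+y\cdot(x-w)$ collapses to $N(w,x,y)=w$ on a set of full measure, whence $N_\ast((\nu_{\eta^*}\triangle\nu_\eta)\vee\kappa)=\nu_{\eta^*}$ independently of $\kappa$. By Theorem~\ref{pomocnicze} this means $\mathcal{P}(X_\eta)=\{\nu_{\eta^*}\}$, i.e.\ $X_\eta$ is uniquely ergodic; since also $\nu_\eta\in\mathcal{P}(X_\eta)$, we conclude $\nu_\eta=\nu_{\eta^*}$, as asserted in the parenthetical remark.

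The main (and essentially only) obstacle is the bookkeeping in $A\Rightarrow C$: recognising that the entropy $\overline d-d^*$ equals the expected gap $\int\bigl(x(0)-w(0)\bigr)\,d(\nu_{\eta^*}\triangle\nu_\eta)$ between the lower and upper envelopes carried by the joining, and that its vanishing degenerates the whole family $\{N_\ast((\nu_{\eta^*}\triangle\nu_\eta)\vee\kappa)\}$ to a single measure. Everything else is a formal consequence of the already-established Theorems~\ref{pomocnicze} and~\ref{entropia}.
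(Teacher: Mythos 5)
Your proposal is correct, but the decisive direction $h(X_\eta)=0\Rightarrow$ unique ergodicity is argued along a genuinely different route than in the paper. The paper deduces $\overline d=d^*$ from Theorem~\ref{entropia}, invokes Lemma~\ref{lm4} to conclude $\eta'=\eta^*$ (hence $\mathscr{B}'=\mathscr{B}^*$ via Theorem~L of~\cite{MR3803141}), observes that $X_{\eta'}=X_{\eta^*}$ is uniquely ergodic by regularity, and transfers this to $X_\eta$ via Theorem~\ref{wnioC}; the burden is thus carried by combinatorial statements about taut sets. You instead stay entirely inside the measure-theoretic description of Theorem~\ref{pomocnicze}: the identity $\int(x(0)-w(0))\,d(\nu_{\eta^*}\triangle\nu_\eta)=\overline d-d^*$ (which is exactly $m_H(W\setminus\underline W)$, cf.\ Lemma~\ref{lm5}), together with $w\le x$ a.e., shows that vanishing entropy collapses the joining onto the diagonal, so every $N_\ast((\nu_{\eta^*}\triangle\nu_\eta)\vee\kappa)$ degenerates to $\nu_{\eta^*}$. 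Your argument buys a cleaner conceptual picture (zero entropy means the ``gap'' between the envelopes $\eta^*$ and $\eta$ has zero density in measure) and avoids Lemma~\ref{lm4} and the combinatorial results \eqref{AA}/Theorem~L entirely, at the cost of using the full strength of the description in Theorem~\ref{pomocnicze} where the paper only needs the weaker equality $\mathcal{P}(X_\eta)=\mathcal{P}(X_{\eta'})$; conversely, the paper's route yields the extra combinatorial information $\mathscr{B}'=\mathscr{B}^*$ as a by-product. Your forward direction also differs mildly from the paper's (which just cites Remark~\ref{entropy_unique_erg}: $\nu_\eta$ has zero entropy plus the variational principle), but both are sound. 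All the facts you rely on ($\nu_\eta([1]_0)=\overline d$ since the cylinder is clopen and $\eta$ is quasi-generic along $(\ell_i)$, $\nu_{\eta^*}([1]_0)=d^*$ by unique ergodicity of $X_{\eta^*}$, and concentration of the joining on $\{w\le x\}$) are available at this point in the paper, so there is no circularity.
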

\begin{remark}
In Corollary~\ref{zerowanie} the second equivalence is true without any assumption on $\eta^*$. It seems open whether there exists $\mathscr{B}$ such that $\eta^*$ is an irregular Toeplitz sequence with $h(X_{\eta^*})=0$ and $\mathcal{P}(X_\eta)$ being not a singleton, cf.~Remark~\ref{piwnica} below.
\end{remark}
\begin{Thx}[cf.~\eqref{isinterg} and~\eqref{miaramax}]\label{thme}
For any $\mathscr{B}\subseteq\N$ such that $\eta^*$ is a regular Toeplitz sequence, the subshift $X_\eta$ is intrinsically ergodic. The measure of maximal entropy equals $N_\ast((\nu_{\eta^*}\triangle\nu_{\eta})\otimes B_{1/2,1/2})$.
\end{Thx}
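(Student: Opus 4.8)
The plan is to combine the parametrization of invariant measures in Theorem~\ref{pomocnicze} with a relative entropy computation over a zero-entropy factor. By Theorem~\ref{pomocnicze}, every $\mu\in\mathcal{P}(X_\eta)$ has the form $\mu=N_\ast((\nu_{\eta^*}\triangle\nu_\eta)\vee\kappa)$ for some $\kappa\in\mathcal{P}(\{0,1\}^\Z)$, and by Theorem~\ref{entropia} (using that regularity of $\eta^*$ makes $X_{\eta^*}$ uniquely ergodic) we have $h(X_\eta)=\overline d-d^*$. So by the variational principle it suffices to show two things: that $\mu_{\max}:=N_\ast((\nu_{\eta^*}\triangle\nu_\eta)\otimes B_{1/2,1/2})$ attains entropy $\overline d-d^*$, and that it is the \emph{only} invariant measure doing so.

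The key structural input is a zero-entropy factor of $(X_\eta,\sigma,\mu)$ recording the ``deterministic skeleton.'' A point $z=N(w,x,y)=w+y\cdot(x-w)$ satisfies $w\le z\le x$ with $w=\underline\varphi(h)=\varphi^*(\Gamma_{H,H^*}(h))$ and $x=\varphi(h)$, so the element $h$ can be read off $z$ through the window structure ($\mu$-a.e.); this yields a measurable factor map $q$ recovering the pair $(w,x)$, which is distributed according to $\nu_{\eta^*}\triangle\nu_\eta$. Denote by $\mathcal Z$ the corresponding invariant sub-$\sigma$-algebra. Since $\nu_{\eta^*}\triangle\nu_\eta$ is isomorphic to the Mirsky system $(X_\eta,\sigma,\nu_\eta)$ (the $\eta^*$-coordinate being a deterministic image of the $\eta$-coordinate under $\Gamma_{H,H^*}$), which is a zero-entropy group rotation, we have $h_\mu(\sigma,\mathcal Z)=0$. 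As the time-zero partition $\mathcal C=\{[0]_0,[1]_0\}$ together with $\mathcal Z$ generates, the Abramov--Rokhlin/relative entropy formula gives
\[
h(\mu)=h_\mu(\sigma,\mathcal C\vee\mathcal Z)=h_\mu(\sigma,\mathcal C\mid\mathcal Z)=H_\mu\Bigl(\mathcal C\,\Big|\,\mathcal Z\vee\textstyle\bigvee_{k\ge1}\sigma^k\mathcal C\Bigr).
\]

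Next I would localize this conditional entropy to the ``free'' coordinate at $0$: conditionally on $\mathcal Z$ the values $w(0),x(0)$ are known, so $z(0)$ is forced unless position $0$ is free, i.e.\ $w(0)=0$ and $x(0)=1$. Hence the conditional entropy is at most $\mu(\{z:0\text{ is free}\})\cdot\log 2$, and this probability equals $\overline d-d^*$: indeed $\nu_\eta$ gives $[1]_0$ mass $\overline d=\boldsymbol{\delta}(\mathcal F_\mathscr{B})$ and $\nu_{\eta^*}$ (a genuine density by regularity of $\eta^*$) gives it mass $d^*$, while $w\le x$. For $\mu_{\max}$ the filling on the free positions is i.i.d.\ $B_{1/2,1/2}$ and independent of $\mathcal Z$ and of the past $\bigvee_{k\ge1}\sigma^k\mathcal C$, so the conditional entropy equals exactly $(\overline d-d^*)\log 2=\overline d-d^*$, showing $h(\mu_{\max})=\overline d-d^*=h(X_\eta)$. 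Conversely, if $h(\mu)=\overline d-d^*$ then equality holds throughout the estimate, forcing the conditional law of $z(0)$ given $\mathcal Z$ and the past to be uniform on $\{0,1\}$ exactly at the free positions; this means the free coordinates are filled by an i.i.d.\ $B_{1/2,1/2}$ process independent of the base, which determines $\mu$ uniquely as $\mu_{\max}$, yielding intrinsic ergodicity.

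The main obstacle will be making the relative entropy step fully rigorous even though the free set $\{n:\,w(n)=0,\,x(n)=1\}$ is non-stationary and depends on the base point $h$. This is precisely where regularity of $\eta^*$ enters: it ensures $d^*$ is a true density and supplies the periodic approximations of $\underline\varphi(h)$ and $\varphi(h)$ that let one pass from the infinite conditioning $\mathcal Z\vee\bigvee_{k\ge1}\sigma^k\mathcal C$ to finite windows and control the typical number of free positions. A secondary point is checking that $q$ is a genuine ($\mu$-a.e.\ defined) factor map recovering $(w,x)$ from $z$ for every $\mu\in\mathcal{P}(X_\eta)$; this should already be available from the analysis underlying Theorems~\ref{glowne} and~\ref{pomocnicze}.
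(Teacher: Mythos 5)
Your overall strategy -- parametrize all invariant measures via Theorem~\ref{pomocnicze}, observe that the ``skeleton'' $(w,x)=(\underline{\varphi}(h),\varphi(h))$ generates a zero-entropy factor, and localize the relative entropy to the set of free positions, whose measure is $m_H(W\setminus\underline{W})=\overline{d}-d^*$ -- is a legitimate route and differs from the paper's. The paper instead lifts everything to a skew product $\widetilde{R}$ on $H\times\{0,1\}^\Z$ with $M_H=\Phi\circ\Psi$, induces on $(W\setminus\underline{W})\times\{0,1\}^\Z$ where $\widetilde{R}$ becomes $R_{W\setminus\underline{W}}\times\sigma$, and then uses Abramov's formula, the Pinsker formula, and Furstenberg's disjointness of $K$-systems from zero-entropy systems to get a unique measure of maximal entropy upstairs, pushing it down by the surjectivity of $\Phi$ (Theorem~\ref{glowne}). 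Your conditional-entropy argument is closer in spirit to the original treatment of the hereditary case and, if completed, would avoid the induced-map machinery; the paper's route buys the identification of the maximal measure and its uniqueness simultaneously from the product structure of the induced system.

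However, the point you dismiss as ``secondary'' is a genuine gap: the map $q$ recovering $(w,x)$ from $z$ is \emph{not} defined $\mu$-a.e.\ for every $\mu\in\mathcal{P}(X_\eta)$, so $\mathcal{Z}$ is not in general a sub-$\sigma$-algebra of the Borel $\sigma$-algebra of $X_\eta$ modulo $\mu$, and the identity $h(\mu)=h_\mu(\sigma,\mathcal{C}\mid\mathcal{Z})$ does not make sense as written. Concretely, take $\kappa=\delta_{\mathbf{0}}$; then $\mu=N_\ast((\nu_{\eta^*}\triangle\nu_\eta)\otimes\delta_{\mathbf 0})=\nu_{\eta^*}$ and $z=\underline{\varphi}(h)=\varphi^*(\Gamma_{H,H^*}(h))$ determines at best $\Gamma_{H,H^*}(h)\in H^*$; since $\Gamma_{H,H^*}$ has nontrivial kernel whenever $\mathscr{B}\neq\mathscr{B}^*$, the upper sequence $x=\varphi(h)$ is not a measurable function of $z$. (This is not a peripheral pathology: your uniqueness step runs the equality case of the conditional-entropy bound over \emph{all} invariant measures, so you cannot simply restrict attention to measures for which $q$ happens to exist.) The repair is to carry out the whole computation in the extension $\bigl((\{0,1\}^\Z)^3,\sigma^{\times 3},\rho\bigr)$ with $\rho=(\nu_{\eta^*}\triangle\nu_\eta)\vee\kappa$, where the skeleton is tautologically measurable: writing $\mathcal{Z}$ for the $\sigma$-algebra of the first two coordinates and $\mathcal{C}$ for the time-zero partition of $z=N(w,x,y)$, one has $h(\mu)=h_\rho(\sigma(z))\le h_\rho(\sigma(z)\vee\mathcal{Z})=H_\rho\bigl(\mathcal{C}\mid\mathcal{Z}\vee\bigvee_{k\ge1}\sigma^k\mathcal{C}\bigr)\le\rho(w(0)=0,\,x(0)=1)\log 2=\overline{d}-d^*$, with the reverse inequality $h(\sigma(z))\ge h(\sigma(z)\vee\mathcal{Z})-h(\mathcal{Z})=h(\sigma(z)\vee\mathcal{Z})$ available because $h(\mathcal{Z})=0$; equality then forces the uniform independent filling and determines $\rho$ on $\sigma(w,x,z)$, hence $\mu$. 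You should also treat the degenerate case $\eta'=\eta^*$ (where $\overline{d}=d^*$ and the free set is null) separately or note that it is subsumed, as the paper does.
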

\begin{Thx}[cf.~\eqref{BBB}]\label{dominik}
For any $\mathscr{B}\subseteq\N$ such that $\eta^*$ is a regular Toeplitz sequence, the ergodic measures are entropy-dense in $\mathcal{P}(X_\eta)$.
\end{Thx}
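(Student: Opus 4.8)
The plan is to transfer the known entropy-density result for the hereditary closure, namely~\eqref{BBB}, to the non-hereditary subshift $X_\eta$ by exploiting the structural descriptions of $\mathcal{P}(X_\eta)$ obtained in Theorems~\ref{glowne} and~\ref{pomocnicze}. The key conceptual point is that, under the regularity assumption on $\eta^*$, Theorem~\ref{pomocnicze} realizes every $\mu\in\mathcal{P}(X_\eta)$ as $N_\ast((\nu_{\eta^*}\triangle\nu_\eta)\vee\kappa)$, so the only ``free'' parameter is the measure $\kappa\in\mathcal{P}(\{0,1\}^\Z)$ on the multiplier coordinate, while the first two coordinates carry the fixed, \emph{uniquely ergodic} joining $\nu_{\eta^*}\triangle\nu_\eta$. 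The heuristic is that the complexity (and hence all the entropy) of a measure in $\mathcal{P}(X_\eta)$ lives in the $\kappa$-coordinate, exactly as in the hereditary case where the entropy lives in the Bernoulli-type factor. I would first prove a clean entropy formula: for $\mu=N_\ast((\nu_{\eta^*}\triangle\nu_\eta)\vee\kappa)$, the measure-theoretic entropy $h(X_\eta,\sigma,\mu)$ depends only on $\kappa$ (and on the fixed uniquely ergodic base), and in fact equals $h(\{0,1\}^\Z,\sigma,\kappa)$ weighted by the density $\overline{d}-d^*$ of the coordinates where the upper and lower approximations $\varphi$ and $\underline\varphi$ differ, mirroring the formula $h(X_\eta)=\overline d - d^*$ from Theorem~\ref{entropia}.

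The main step is then an approximation argument. Given a target $\mu\in\mathcal{P}(X_\eta)$ with representing multiplier $\kappa$, I would approximate $\kappa$ weakly by ergodic measures $\kappa_n\in\mathcal{P}^e(\{0,1\}^\Z)$ with $h(\{0,1\}^\Z,\sigma,\kappa_n)\to h(\{0,1\}^\Z,\sigma,\kappa)$; such sequences exist because the full shift $(\{0,1\}^\Z,\sigma)$ has entropy-dense ergodic measures (this is the classical asymptotically-entropy-dense property of shifts of finite type, and is exactly the engine behind~\eqref{BBB}). Setting $\mu_n:=N_\ast((\nu_{\eta^*}\triangle\nu_\eta)\vee\kappa_n)$, I must check two things: that $\mu_n\to\mu$ weakly, which follows from continuity of $N$ together with weak-continuity of the map $\kappa\mapsto(\nu_{\eta^*}\triangle\nu_\eta)\vee\kappa$ (here one uses that the base is uniquely ergodic, so that $(\nu_{\eta^*}\triangle\nu_\eta)\vee\kappa$ is \emph{forced} to be a specific joining — indeed the relatively independent joining over the common structure — depending continuously on $\kappa$); and that $h(X_\eta,\sigma,\mu_n)\to h(X_\eta,\sigma,\mu)$, which follows from the entropy formula of the previous paragraph since the entropies reduce to $h(\{0,1\}^\Z,\sigma,\kappa_n)\to h(\{0,1\}^\Z,\sigma,\kappa)$. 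Finally I must verify $\mu_n\in\mathcal{P}^e(X_\eta)$: ergodicity of $\mu_n$ follows because $N_\ast$ applied to a joining of a uniquely ergodic (hence ergodic) base with an ergodic $\kappa_n$ yields an ergodic measure, provided the joining itself is ergodic — and a joining of an ergodic system with a uniquely ergodic one is automatically ergodic.

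The hard part will be establishing that the entropy of $\mu=N_\ast((\nu_{\eta^*}\triangle\nu_\eta)\vee\kappa)$ is \emph{exactly} controlled by $\kappa$ in both directions, i.e.\ proving both the upper bound $h(X_\eta,\sigma,\mu)\le(\text{const})\cdot h(\kappa)$ and a matching lower bound, uniformly enough that convergence of $h(\kappa_n)$ transfers to convergence of $h(\mu_n)$. The subtlety is that $N$ is not injective — on the $\underline\varphi$-positions the output is forced and the multiplier coordinate $y$ is irrelevant, whereas on the positions where $\varphi(h)=1$ but $\underline\varphi(h)=0$ the output equals $y$. One must show that, $\mu$-almost surely, the base factor $(\nu_{\eta^*}\triangle\nu_\eta)$ together with the restriction of $y$ to the ``active'' positions determines the point, so that $N$ is a measure-theoretic isomorphism onto its image after accounting for the base; the regularity of $\eta^*$ is what guarantees (via Theorem~\ref{entropia} and the periodic approximations promised in the abstract) that the active positions have a well-defined density $\overline d-d^*$ and that the base contributes zero entropy (since $\eta^*$ regular Toeplitz forces $h(X_{\eta^*})=0$). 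I would isolate this as a separate lemma computing $h(X_\eta,\sigma,N_\ast((\nu_{\eta^*}\triangle\nu_\eta)\vee\kappa))$, model its proof on the corresponding entropy computation for $\widetilde X_\eta$ in~\cite{MR3803141}, and then the entropy-density statement follows formally from the three verifications above.
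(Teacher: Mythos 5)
Your proposal does not follow the paper's route, and as written it has several genuine gaps. The paper proves Theorem~\ref{dominik} combinatorially: after reducing to taut $\mathscr{B}$ (so that $X_\eta=\overline{[\eta^*,\eta]}$ by Proposition~\ref{between_etas}), it approximates $X_\eta$ in the Hausdorff pseudometric $\underline{d}^H$ by the transitive sofic shifts $[\underline{\eta}_K,\eta_K]$ built from the periodic approximations, and invokes the Konieczny--Kupsa--Kwietniak criterion (Proposition~\ref{z:1}) that $\underline{d}^H$-limits of transitive sofic shifts have entropy-dense ergodic measures; the convergence $\underline{d}^H([\underline{\eta}_K,\eta_K],X_\eta)\to 0$ is exactly where regularity of $\eta^*$ and Davenport--Erd\H{o}s enter. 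Your route through the joining description of $\mathcal{P}(X_\eta)$ avoids none of the difficulties and introduces new ones.

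Concretely: (1) $(\nu_{\eta^*}\triangle\nu_\eta)\vee\kappa$ denotes an \emph{arbitrary} joining, and unique ergodicity of the base does not force it to be unique --- a uniquely ergodic rotation admits many self-joinings (all off-diagonal ones, and their convex combinations), so ``the map $\kappa\mapsto(\nu_{\eta^*}\triangle\nu_\eta)\vee\kappa$'' is not well defined and your weak-continuity argument for $\mu_n\to\mu$ has no meaning as stated. (2) The entropy formula $h(X_\eta,\sigma,\mu)=(\overline{d}-d^*)\cdot h(\kappa)$ is false in general: by the Abramov computation in Section~\ref{wewna}, the entropy of $N_\ast(\rho)$ equals $m_H(W\setminus\underline{W})$ times the entropy of the process obtained by reading the third coordinate \emph{along the active positions}, and this subsampled entropy depends on the joining $\rho$, not merely on the marginal $\kappa$; two joinings with the same $\kappa$ can yield measures of different entropy (e.g.\ one can arrange the values of $y$ on the active positions to be base-measurable, killing the entropy even when $h(\kappa)>0$). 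So convergence $h(\kappa_n)\to h(\kappa)$ does not transfer to $h(\mu_n)\to h(\mu)$. (3) The claim that a joining of an ergodic system with a uniquely ergodic one is automatically ergodic is false (the average of two off-diagonal self-joinings of an irrational rotation is a non-ergodic joining of ergodic systems); so your $\mu_n$ need not be ergodic. Each of these is a real obstruction, and repairing them would essentially require redoing the $\overline{d}$-approximation machinery that the paper imports from~\cite{MR4544150}.
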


\subsubsection{Dynamical diagrams viewpoint}\label{asercje}
In this section we present a dynamical diagrams viewpoint on Theorem~\ref{pomocnicze}, Theorem~\ref{wnioC}, Theorem~\ref{glowne} and Theorem~\ref{thme}. The first three of these results can be formulated in terms of dynamical diagrams and the structure of the proofs also relies on this notion. As for Theorem~\ref{thme}, the dynamical diagrams serve as a tool in the proof.

\paragraph{On Theorem~\ref{pomocnicze} and Theorem~\ref{wnioC}}
These two results can be proved separately (Theorem~\ref{wnioC} is then a consequence of Theorem~\ref{pomocnicze}), however, there is a nice way to treat them together, which has the additional advantage of slightly shortening the proofs. Recall that by~\eqref{j3},~\eqref{j1} and~\eqref{j2}, we have
\[
\overline{[\underline{\varphi^{\prime}},\varphi^{\prime}]}=X_{\varphi^{\prime}}=X_{\eta'}\subseteq X_\eta \subseteq X_\varphi = \overline{[\underline{\varphi},\varphi]}.
\]
Moreover, under the extra assumption that $\eta^*$ is a regular Toeplitz sequence, by Remark~1.4 in \cite{MR4280951} and observing \eqref{zale} we have
\begin{equation}\label{etaregular}
\mathcal{P}(X_\varphi)=\mathcal{P}([\underline{\varphi},\varphi]) \text{ and }\mathcal{P}(X_{\varphi'})=\mathcal{P}([\underline{\varphi}',\varphi']).
\end{equation}

\begin{remark}\label{piwnica}
Equality~\eqref{etaregular} is actually the main reason for the extra assumption on $\eta^*$ in Keller's conjecture from~\cite{MR4280951} (see Remark~1.4 therein). In fact, this goes deeper. If $\eta^*$ is a regular Toeplitz sequence then $\mathcal{P}(X_{\eta^*})=\{\nu_{\eta^*}\}$, while when we drop the assumption on $\eta^*$, various things can happen to $\mathcal{P}(X_{\eta^*})$: it can be a singleton consisting only of $\nu_{\eta^*}$, see~Theorem~2 in~\cite{IrK} (even if $\eta^*$ is an irregular Toeplitz sequence!), but it can also contain some positive entropy measure, see~Theorem~1 in~\cite{IrK}. Thus, since $X_{\eta^*}\subseteq X_\eta$, we cannot expect to obtain a consistent description of $\mathcal{P}(X_\eta)$ without imposing any restrictions on $\eta^*$. We will use the fact that the Toeplitz sequence $\eta^*$ is regular very frequently in our proofs.
\end{remark}

Continuing our argument from \eqref{etaregular}, we obtain for a regular Toeplitz sequences \( \eta^{*} \) that
\begin{multline}\label{a}
\mathcal{P}([\underline{\varphi^{\prime}},\varphi^{\prime}])=\mathcal{P}(\overline{[\underline{\varphi^{\prime}},\varphi^{\prime}]})=\mathcal{P}(X_{\varphi^{\prime}})=\mathcal{P}(X_{\eta'})\\
\subseteq \mathcal{P}(X_\eta) \subseteq \mathcal{P}(X_\varphi) =\mathcal{P}(\overline{[\underline{\varphi},\varphi]})=\mathcal{P}([\underline{\varphi},\varphi]).
\end{multline}
Therefore, the assertions of Theorem~\ref{pomocnicze} and Theorem~\ref{wnioC} are equivalent to the following two inclusions:
\begin{equation}\label{zato1}
\mathcal{P}([\underline{\varphi},\varphi])\subseteq \{N_\ast((\nu_{\eta^*}\triangle\nu_\eta)
\vee\kappa) : \kappa\in\mathcal{P}(\{0,1\}^\Z)\} \subseteq \mathcal{P}(X_{\varphi'}).
\end{equation}
Consider the following diagram:
\begin{gather}\tag{$\mathbf{D}_{B,C}$}\label{diagBC}
\begin{tikzpicture}[baseline=(current  bounding  box.center)]
\node (T) at (0,1.5) {$(  (\{0,1\}^{\Z})^3,\sigma^{\times 3},\{(\nu_{\eta^*}\triangle\nu_\eta)\vee\kappa: \kappa\in\mathcal{P}(\{0,1\}^{\Z})\} )
$};
\node (M) at (0,0) {$([\underline{\varphi},\varphi],\sigma)$};
\node (B) at (0,-1.5) {$(X_{\varphi'},\sigma)$};
\draw[->]  (T) edge node[auto] {$N$} (M)
	       (M) edge node[auto] {$id$} (B);
\end{tikzpicture}
\end{gather}
and notice that the assertions of Theorem~\ref{pomocnicze} and Theorem~\ref{wnioC} are equivalent to~\eqref{diagBC} being a surjective commutative dynamical diagram. Indeed,
\begin{itemize}
\item \eqref{diagBC} is a dynamical diagram $\iff$ the maps $N$ and $id$ are morphisms $\implies$ the second inclusion in~\eqref{zato1} holds.
\end{itemize} 
Notice that by~\eqref{a}, the map $id$ is a morphism if and only if $\mathcal{P}([\underline{\varphi},\varphi])=\mathcal{P}(X_{\varphi'})$. Therefore, $id$ is then automatically surjective.
\begin{itemize}
\item \eqref{diagBC} is in addition surjective $\iff$ the morphism $N$ is surjective $\implies$ the first inclusion in~\eqref{zato1} holds.
\end{itemize}
Moreover, if both inclusions in~\eqref{zato1} hold, then by~\eqref{a} we obtain the equality $ \{N_\ast((\nu_{\eta^*}\triangle\nu_\eta) \vee\kappa) : \kappa\in\mathcal{P}(\{0,1\}^\Z)\} = \mathcal{P}([\underline{\varphi},\varphi]) = \mathcal{P}(X_{\varphi'}) $, which implies that \( N \) and \( id \) are surjective morphisms.

\paragraph{On Theorem~\ref{glowne}}
Having proved Theorem~\ref{pomocnicze} first, in order to prove Theorem~\ref{glowne} we will only need to show that
\[
\{N_\ast((\nu_{\eta*}\triangle\nu_\eta)\vee\kappa) : \kappa\in\mathcal{P}(\{0,1\}^{\Z})\}=\{(M_H)_\ast(m_H\vee\kappa):\kappa\in \mathcal{P}(\{0,1\}^\Z)\}.
\]
Let $\underline{\varphi}\otimes \varphi\colon H\to (\{0,1\}^\Z)^2$ and $(\underline{\varphi}\otimes \varphi)(h)=(\underline{\varphi}(h),\varphi(h))$. Consider the following diagram:
\begin{gather}\tag{$\mathbf{D}_A$}\label{zato2}
\begin{tikzpicture}[baseline=(current  bounding  box.center)]
\node (T) at (0,0) {$(H\times \{0,1\}^\Z,R\times \sigma)$};
\node (M) at (0,-1.5) {$ ((\{0,1\}^{\Z})^3,\sigma^{\times 3},\{(\nu_{\eta^*}\triangle\nu_\eta)\vee\kappa:\kappa\in\mathcal{P}(\{0,1\}^\Z)\})$};
\node (B) at (0,-3) {$(X_\eta,\sigma)$};
\node (TR) at (5,0) {};
\node (BR) at (5,-3) {};
\draw (T) -- (5,0) -- node[auto] {$M_H$} (5,-3);
\draw [->] (5,-3) -- (B);
\draw[->]  (T) edge node[auto] {$(\underline{\varphi}\otimes \varphi)\times id$} (M)
		   (M) edge node[auto] {$N$} (B);
\end{tikzpicture}
\end{gather}
Then:
\begin{itemize}
\item \eqref{zato2} is a commutative dynamical diagram $\implies$ 
\[
\{(M_H)_\ast(m_H\vee\kappa):\kappa\in \mathcal{P}(\{0,1\}^\Z)\}\subseteq \{N_\ast((\nu_{\eta*}\triangle\nu_\eta)\vee\kappa) : \kappa\in\mathcal{P}(\{0,1\}^{\Z})\}
\]
(indeed, by the commutativity, ``travelling'' via $M_H$ is the same as ``travelling'' first via $(\underline{\varphi}\otimes \varphi)\times id$ and then via $N$),
\item \eqref{zato2} is surjective $\implies$ 
\[
\{ N_\ast((\nu_{\eta*}\triangle\nu_\eta)\vee\kappa) : \kappa\in\mathcal{P}(\{0,1\}^{\Z}) \}\subseteq
 \{ (M_H)_\ast(m_H\vee\kappa):\kappa\in \mathcal{P}(\{0,1\}^\Z)\}
\]
(indeed, we can travel up from $N_\ast((\nu_{\eta*}\triangle\nu_\eta)\vee\kappa)$ to $(\nu_{\eta*}\triangle\nu_\eta)\vee\kappa$ via $N$, then again up by $(\underline{\varphi}\otimes \varphi)\times id$, i.e.\ use the surjectivity of $(\underline{\varphi}\otimes \varphi)\times id$ and finally use that $M_H=N\circ((\underline{\varphi}\otimes \varphi)\times id)$ as \eqref{zato2} commutes).
\end{itemize}
In other words, the assertion of Theorem~\ref{glowne} follows from Theorem~\ref{pomocnicze} and the commutativity and the surjectivity of \eqref{zato2}.

In fact, Theorem~\ref{glowne}, Theorem~\ref{pomocnicze} and Theorem~\ref{wnioC} can be summarized using a single diagram, namely
\[
\begin{tikzpicture}[baseline=(current  bounding  box.center)]
\node (2) at (0,-1.5) {$(H\times \{0,1\}^\Z,R\times \sigma)$};\
\node (3) at (0,-3) {$((\{0,1\}^\Z)^3,\sigma^{\times 3},\{(\nu_{\eta^*}\triangle\nu_\eta)\vee\kappa : \kappa\in \mathcal{P}(\{0,1\}^\Z)\})$};
\node (4) at (0,-4.5) {$([\underline{\varphi},\varphi],\sigma)$};
\node (5) at (0,-6) {$(X_{\varphi'},\sigma)$};

\draw [->] (2) -- node[auto] {$(\underline{\varphi}\otimes \varphi)\times id$} (3);
\draw [->] (3) -- node[auto] {$N$} (4);
\draw [->] (4) -- node[auto] {$id$} (5);

\draw (2) -- (5,-1.5);
\draw (5,-1.5) -- node[auto] {$M_H$} (5,-4.5);
\draw[->] (5,-4.5) -- (4);
\end{tikzpicture}
\]
Notice that if we prove that the above diagram is a commutative and surjective dynamical diagram then indeed we get:
\begin{itemize}
\item $\mathcal{P}(X_{\varphi'})=\mathcal{P}(X_{\eta'})=\mathcal{P}(X_\eta)=\mathcal{P}([\underline{\varphi},\varphi])$,
\item $\mathcal{P}(X_\eta)=\{N_\ast((\nu_{\eta^*}\triangle\nu_\eta)\vee\kappa):\kappa\in\mathcal{P}(\{0,1\}^\Z)\}=\{(M_H)_\ast(m_H\vee\kappa) :\kappa\in\mathcal{P}(\{0,1\}^\Z ) \}$.
\end{itemize} 

\paragraph{On Theorem~\ref{thme}} 
The main idea of the proof of Theorem~\ref{thme} is to equip the diagram
\[
(H\times \{0,1\}^\Z,R\times\sigma) \xrightarrow{M_H} (X_\eta,\sigma)
\]
(which is surjective by Theorem~\ref{glowne}) with an ``intermediate'' vertex:
\[
(H\times \{0,1\}^\Z,R\times\sigma) \xrightarrow{\Psi} (H\times\{0,1\}^\Z,\widetilde{R})\xrightarrow{\Phi} (X_\eta,\sigma),
\]
where $\widetilde{R}$ is a certain skew product over $R\colon H\to H$ and the maps $\Phi$ and $\Psi$ are morphisms defined later. We prove then that $h(H\times\{0,1\}^\Z,\widetilde{R})=\overline{d}-d^*$ (which equals to $h(X_\eta)$ by Theorem~\ref{entropia}) and prove the intrinsic ergodicity of $(H\times\{0,1\}^\Z,\widetilde{R})$. For the details, see Section~\ref{wewna}.

\section{Invariant measures}
\label{se:miary}

Before we begin working on the description of $\mathcal{P}(X_\eta)$, let us concentrate on $X_\eta$ itself. Keller~\cite{MR4280951} proved that for any taut set $\mathscr{B}$, the subshift $X_\eta$ is in a way ``hereditary''. We rephrase his result in the following way.
\begin{proposition}\label{between_etas}
For any $\mathscr{B}\subseteq \mathbb{N}$, we have
\(
X_\eta\subseteq\overline{[\eta^*,\eta]}\subseteq X_\varphi.
\)
In particular, if $\mathscr{B}$ is taut, $X_\eta=\overline{[\eta^*,\eta]}=X_\varphi$.
\end{proposition}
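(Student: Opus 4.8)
The plan is to prove the two inclusions $X_\eta\subseteq\overline{[\eta^*,\eta]}$ and $\overline{[\eta^*,\eta]}\subseteq X_\varphi$ separately, and then deduce the taut case from~\eqref{j2}. Throughout I may assume that $\mathscr{B}$ is primitive: all the objects appearing in the statement ($\eta$, $\eta^*$, $X_\eta$ and $X_\varphi$) depend only on $\mathcal{M}_\mathscr{B}=\mathcal{M}_{\mathscr{B}^{prim}}$, so replacing $\mathscr{B}$ by $\mathscr{B}^{prim}$ changes nothing. The two key identifications I will use are $\eta=\varphi(\Delta(0))$ (already noted in the text) and $\eta^*=\underline{\varphi}(\Delta(0))$.

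For the first inclusion, I would first record that $\eta^*\leq\eta$ coordinatewise. Indeed, $\mathscr{B}^*=(\mathscr{B}\cup D)^{prim}$ gives $\mathcal{M}_\mathscr{B}\subseteq\mathcal{M}_{\mathscr{B}^*}$, hence $\mathcal{F}_{\mathscr{B}^*}\subseteq\mathcal{F}_\mathscr{B}$ and therefore $\eta^*=\mathbf{1}_{\mathcal{F}_{\mathscr{B}^*}}\leq\mathbf{1}_{\mathcal{F}_\mathscr{B}}=\eta$. Consequently, for every $k\in\Z$ we may take $y=\eta$ in the definition of $[\eta^*,\eta]$ to get $\sigma^k\eta\in[\eta^*,\eta]$. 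Thus the whole orbit of $\eta$ lies in $[\eta^*,\eta]$, and passing to closures yields $X_\eta=\overline{\{\sigma^k\eta:k\in\Z\}}\subseteq\overline{[\eta^*,\eta]}$.

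For the second inclusion, I first verify $\eta^*=\underline{\varphi}(\Delta(0))$: by~\eqref{eqn:PhiGamma} we have $\underline{\varphi}(\Delta(0))=\varphi^*(\Gamma_{H,H^*}(\Delta(0)))$, and since $\Gamma_{H,H^*}$ is the continuous extension of $\Delta(n)\mapsto\Delta^*(n)$ we get $\Gamma_{H,H^*}(\Delta(0))=\Delta^*(0)$, whence $\underline{\varphi}(\Delta(0))=\varphi^*(\Delta^*(0))=\eta^*$. Now take any $z\in[\eta^*,\eta]$, say $z=\sigma^m y$ with $\eta^*\leq y\leq\eta$. Then $\underline{\varphi}(\Delta(0))=\eta^*\leq y\leq\eta=\varphi(\Delta(0))$, so $y\in[\underline{\varphi},\varphi]$ (with the choice $h=\Delta(0)$ in~\eqref{m1}). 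Since $\mathscr{B}$ is primitive, Theorem~1.1 in~\cite{MR4280951} gives $[\underline{\varphi},\varphi]\subseteq X_\varphi$, so $y\in X_\varphi$; as $X_\varphi$ is a subshift, $z=\sigma^m y\in X_\varphi$ as well. Hence $[\eta^*,\eta]\subseteq X_\varphi$, and taking the closure of the left-hand side (using that $X_\varphi$ is closed) gives $\overline{[\eta^*,\eta]}\subseteq X_\varphi$.

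Finally, the taut case is immediate: by~\eqref{j2}, tautness of $\mathscr{B}$ gives $X_\eta=X_\varphi$, and sandwiching with the two inclusions just proved forces $X_\eta=\overline{[\eta^*,\eta]}=X_\varphi$. I expect no serious obstacle here: the content is essentially bookkeeping once the identifications $\eta=\varphi(\Delta(0))$ and $\eta^*=\underline{\varphi}(\Delta(0))$ are in place. The one point deserving care is the reduction to primitive $\mathscr{B}$ — one should make sure that $X_\varphi$ (and not merely $X_\eta$) is genuinely unchanged when passing to $\mathscr{B}^{prim}$ — together with the correct invocation of Keller's inclusion $[\underline{\varphi},\varphi]\subseteq X_\varphi$, which is exactly where primitivity enters.
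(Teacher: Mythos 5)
Your proposal is correct and follows essentially the same route as the paper's proof: identify $\eta=\varphi(\Delta(0))$ and $\eta^*=\underline{\varphi}(\Delta(0))$ (the paper cites Keller's Corollary~1.4 directly, while you derive the latter from~\eqref{eqn:PhiGamma}, which amounts to the same thing), deduce $[\eta^*,\eta]\subseteq[\underline{\varphi},\varphi]\subseteq X_\varphi$, and settle the taut case via~\eqref{j2}. Your extra care about the reduction to primitive $\mathscr{B}$ is welcome but not a substantive difference, since the paper's standing convention already covers it.
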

\begin{proof}
Clearly, $\varphi(\Delta(0))=\eta\in [\eta^*,\eta]$. Moreover, by Corollary 1.4 in \cite{MR4280951}, we have $\eta^*=\underline{\varphi}(\Delta(0))$. Thus, $[\eta^*,\eta]\subseteq[\underline{\varphi},\varphi]$. This yields
\[
X_\eta\subseteq \overline{[\eta^*,\eta]}\subseteq \overline{[\underline{\varphi},\varphi]} =X_\varphi.
\]
By Corollary~1.2 in~\cite{MR4280951}, if $\mathscr{B}$ is taut, we have $X_\eta=X_\varphi$, which completes the proof.
\end{proof}

\subsection{Proof of Theorem~\ref{pomocnicze} and Theorem~\ref{wnioC}}
\label{subsec:GenPoints}

\subsubsection{\texorpdfstring{\eqref{diagBC}}{(D\_B,C)} is a (commutative) dynamical diagram}

We will need a certain lemma from~\cite{MR4525753} about ``lifting'' quasi-generic points to joinings. We formulate it here for $\Z$-actions, while the original version is more general (the result is true for actions of countable cancellative semigroups and arbitrary Følner sequences).
\begin{theorem}[Theorem 5.16 in~\cite{MR4525753}]\label{downar}
Let $\mathcal{A}_1,\mathcal{A}_2$ be finite alphabets. If $x\in\mathcal{A}_1^\Z$ is quasi-generic for $\nu$ along \( (\ell_{i}) \) and $\nu\vee\kappa\in \mathcal{P}(\mathcal{A}_1^\Z\times \mathcal{A}_2^\Z,\sigma\times \sigma)$, then there exists $y\in\mathcal{A}_2^\Z$ such that the pair $(x,y)$ is quasi-generic for $\nu\vee\kappa$ along some subsequence of \( (\ell_{i}) \).
\end{theorem}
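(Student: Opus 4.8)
The plan is to reduce the statement to a combinatorial construction of $y$ along a subsequence of $(\ell_i)$, built block by block. Since $\mathcal{A}_1^\Z\times\mathcal{A}_2^\Z$ is compact and metrizable, weak-$*$ convergence of the empirical measures $\frac{1}{L}\sum_{\ell\leq L}\delta_{(\sigma^\ell x,\sigma^\ell y)}$ can be tested on the countable family of indicator functions of cylinder sets; I fix such an enumeration and a metric on $\mathcal{P}(\mathcal{A}_1^\Z\times\mathcal{A}_2^\Z)$ inducing the weak-$*$ topology. The first observation is that \emph{any} weak-$*$ limit of these empirical measures is automatically $(\sigma\times\sigma)$-invariant and, because the projection to the first coordinate is exactly $\frac{1}{L}\sum_{\ell\leq L}\delta_{\sigma^\ell x}\to\nu$, has first marginal precisely $\nu$. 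Hence the whole difficulty lies in choosing $y$ so that the joint statistics (and in particular the second marginal) converge to those prescribed by the joining $\rho:=\nu\vee\kappa$.

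Next I would isolate the combinatorial core. Fix $k\geq1$ and $\varepsilon>0$, and let $\rho^{(k)}$ be the distribution of $k$-blocks of a pair under $\rho$, a measure on $\mathcal{A}_1^k\times\mathcal{A}_2^k$ whose first marginal is the $k$-block distribution $\nu^{(k)}$ of $\nu$. Disintegrating $\rho^{(k)}$ over $\nu^{(k)}$ gives conditional distributions $\{\rho^{(k)}_a\}_{a\in\mathcal{A}_1^k}$ on $\mathcal{A}_2^k$. The key lemma is: if a word $u\in\mathcal{A}_1^N$ has empirical $k$-block distribution $\varepsilon$-close to $\nu^{(k)}$, then for $N$ large there is $v\in\mathcal{A}_2^N$ whose joint empirical $k$-block distribution with $u$ is $O(\varepsilon)$-close to $\rho^{(k)}$. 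To build $v$ one cuts $[1,N]$ into consecutive windows that are large multiples of $k$ and, for each $x$-pattern $a$ occurring in a window, fills in the corresponding $y$-coordinates so that the observed conditional frequencies approximate $\rho^{(k)}_a$; the boundary and overlap contributions between successive $k$-blocks are of order $k/N$ and absorbed into the error.

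With the core lemma in hand I would run a diagonal construction: choose $k_m\to\infty$ and $\varepsilon_m\to0$, and use quasi-genericity to extract a subsequence $(\ell_{i_m})$ along which the empirical $k_m$-block distribution of $x$ on each increment $(\ell_{i_{m-1}},\ell_{i_m}]$ is $\varepsilon_m$-close to $\nu^{(k_m)}$, with the increment long compared with $k_m$. Applying the core lemma on each increment defines $y$ there (its negative coordinates may be set arbitrarily). For a fixed cylinder, i.e.\ a fixed scale $k$, and all large $m$ we have $k_m\geq k$, so each recent increment is good even at scale $k$ because the $k$-block distribution is a continuous (total-variation-contracting) image of the $k_m$-block distribution, while the finitely many early, coarse increments have bounded total length and are diluted as $\ell_{i_m}\to\infty$. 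Hence the joint empirical measures over $[1,\ell_{i_m}]$ converge on every cylinder to the value dictated by $\rho$, and since cylinders determine a measure, $(x,y)$ is quasi-generic for $\rho$ along $(\ell_{i_m})$.

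The main obstacle is the combinatorial core lemma: realizing the prescribed \emph{conditional} distributions $\rho^{(k)}_a$ with the correct frequencies while the first coordinate is forced to equal the given word $u$, so that simultaneously for every pattern the proportion of sites showing each pair $(a,b)$ matches $\rho^{(k)}(a,b)$ up to $\varepsilon$, and then verifying that passing from disjoint $k$-blocks to the genuinely overlapping $k$-block statistics perturbs this by only $O(k/N)$. Keeping the two independent error sources — the $\varepsilon_m$-approximation of $\nu$ by $x$ on each increment, and the $k_m/(\text{increment length})$ boundary error — under control simultaneously across all scales is the delicate bookkeeping the diagonal argument must respect.
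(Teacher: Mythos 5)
First, for context: the paper does not prove Theorem~\ref{downar} at all --- it is quoted as a black box from Bergelson--Downarowicz--Vandehey (Theorem 5.16 in their paper), so there is no internal proof to compare yours against, and your argument has to stand on its own. Its global architecture is the right one for such lifting results: test weak-$*$ convergence on cylinders, note that every limit of the joint empirical measures is automatically $(\sigma\times\sigma)$-invariant with first marginal $\nu$, build $y$ on successive increments of a subsequence of $(\ell_i)$, and dilute the finitely many coarse increments. The genuine gap is in the combinatorial core lemma, exactly where you locate the difficulty, and the construction you sketch for it does not work. Filling in the $y$-coordinates from the $k$-block conditionals $\rho^{(k)}_a$ is either ill-defined (overlapping occurrences of $k$-patterns share $y$-coordinates, so they cannot receive independent samples) or, if carried out on disjoint $k$-blocks, controls only the $k$-blocks starting at the chosen positions; the straddling $k$-blocks occupy a fraction $(k-1)/k$ of all positions --- not $O(k/N)$ --- and their statistics are left uncontrolled. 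Concretely, take $\nu=\kappa=B_{1/2,1/2}$ and let $\rho$ be the shifted diagonal joining supported on pairs $(x,\sigma x)$, with $k=2$: assigning to each disjoint $2$-block $u_{2i}u_{2i+1}$ a sample from $\rho^{(2)}_{u_{2i}u_{2i+1}}=\delta_{u_{2i+1}}\otimes B_{1/2,1/2}$ yields a pair whose overlapping $2$-block empirical distribution places mass about $1/4$, uniformly in $N$, on pairs that are $\rho^{(2)}$-null. The underlying reason is that $\rho^{(k)}$ does not encode the correlations between consecutive $y$-blocks, so no assignment based solely on the $\rho^{(k)}_a$ can reproduce the overlapping $k$-block statistics.

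The repair is standard but is precisely the missing content: use windows $W$ of length $L=mk$ with $m$ large, sample $v$ on each window from the window-level conditional $\rho^{(L)}_{u|_W}$, and invoke $(\sigma\times\sigma)$-invariance of $\rho$ together with a law of large numbers across independently sampled windows to get the overlapping $k$-block statistics right on average, with only the blocks straddling window boundaries (a fraction $1/m$) uncontrolled. Two further points then still need care: the empirical distribution of the disjoint $L$-windows of $u$ need not approximate $\nu^{(L)}$ even when the overlapping $L$-block distribution does (a periodic $u$ shows this), so one must average over the $L$ phases of the window partition and choose a good phase; and one needs that the overlapping $L$-block empirical distribution of $x[1,\ell_i]$ converges to $\nu^{(L)}$ in total variation for each fixed $L$, which quasi-genericity does supply because $\mathcal{A}_1^L$ is finite. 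With these ingredients your diagonal extraction goes through; without them the core lemma, and hence the proof, is incomplete.
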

Let $(\ell_i)$ be a sequence realizing the lower density of $\mathcal{M}_\mathscr{B}$ and suppose that $X_{\eta^*}$ is uniquely ergodic (in particular, this happens if $\eta^*$ is a regular Toeplitz sequence). If the pair $(\eta^*,\eta)$ is quasi-generic along a subsequence $(\ell_{i_j})$ of $(\ell_i)$ for some measure then this limit measure must be a joining of $\nu_{\eta^*}$ and $\nu_\eta$. In fact, 
we have the following lemma which we will prove in a moment.
\begin{lemma}
\label{prop:PhiMHGeneric}
Let $\mathscr{B}\subseteq \mathbb{N}$ be such that $X_{\eta^*}$ is uniquely ergodic. Let $(\ell_i)$ be any sequence realizing the lower density of $\mathcal{M}_{\mathscr{B}}$. Then the point $(\eta^*,\eta)$ is quasi-generic along $(\ell_i)$ for $(\underline{\varphi}\otimes\varphi)_\ast(m_H)$.
\end{lemma}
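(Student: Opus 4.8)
The plan is to reduce the joint empirical measure to a pushforward from the group rotation $(H,R)$, and then to replace the (discontinuous) coding maps by continuous periodic approximations obtained from finite truncations of $\mathscr{B}$ and $\mathscr{B}^{*}$.

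First I would use the intertwining relations $\underline{\varphi}\circ R=\sigma\circ\underline{\varphi}$ and $\varphi\circ R=\sigma\circ\varphi$, together with $\eta^{*}=\underline{\varphi}(\Delta(0))$ and $\eta=\varphi(\Delta(0))$, to write, for every $\ell\in\Z$,
\[
(\sigma^{\ell}\eta^{*},\sigma^{\ell}\eta)=(\underline{\varphi}(\Delta(\ell)),\varphi(\Delta(\ell)))=(\underline{\varphi}\otimes\varphi)(\Delta(\ell)).
\]
Hence the empirical measures of $(\eta^{*},\eta)$ along $(\ell_{i})$ are exactly the pushforwards $(\underline{\varphi}\otimes\varphi)_{\ast}\mu_{i}$ of the measures $\mu_{i}:=\frac{1}{\ell_{i}}\sum_{\ell\le\ell_{i}}\delta_{\Delta(\ell)}$ on $H$. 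Since $(H,R)$ is a minimal rotation of a compact monothetic group, it is uniquely ergodic with invariant measure $m_{H}$, so $\Delta(0)$ is generic and $\mu_{i}\to m_{H}$ weakly along the full sequence, in particular along $(\ell_{i})$. It therefore remains to pass the weak limit through $\underline{\varphi}\otimes\varphi$, whose pushforward of $m_{H}$ is precisely the candidate limit.

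The difficulty is that $\underline{\varphi}\otimes\varphi$ is not continuous, so the limit cannot be read off directly. The plan is to truncate to $\mathscr{B}_{K}$ and $\mathscr{B}^{*}_{K}$: the associated windows are clopen (finite intersections of the conditions $h_{b}\neq0$), so the truncated codings $\varphi_{K}$ and $\underline{\varphi}_{K}$ are \emph{continuous}, and for each fixed $K$ and each cylinder $F$, unique ergodicity of $(H,R)$ gives
\[
\frac{1}{\ell_{i}}\sum_{\ell\le\ell_{i}}F\bigl((\underline{\varphi}_{K}\otimes\varphi_{K})(\Delta(\ell))\bigr)\xrightarrow[i\to\infty]{}\int F\circ(\underline{\varphi}_{K}\otimes\varphi_{K})\,dm_{H}.
\]
Because $W_{K}\downarrow W$ and $W^{*}_{K}\downarrow W^{*}$, these approximations decrease pointwise to $\varphi$ and $\underline{\varphi}$, so by dominated convergence the right-hand side tends to $\int F\,d\bigl((\underline{\varphi}\otimes\varphi)_{\ast}m_{H}\bigr)$ as $K\to\infty$.

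The main obstacle, and the point where the hypotheses genuinely enter, is to control the discrepancy between the true empirical averages and their level-$K$ versions uniformly enough to interchange the limits $i\to\infty$ and $K\to\infty$. Since $F$ depends on finitely many coordinates, this error is bounded by $2\|F\|_{\infty}$ times the normalized number of $\ell\le\ell_{i}$ at which some coordinate of $(\underline{\varphi}_{K}\otimes\varphi_{K})(\Delta(\ell))$ in the relevant window differs from that of $(\underline{\varphi}\otimes\varphi)(\Delta(\ell))$. For the $\eta$-coordinate this count is governed by $|(\mathcal{M}_{\mathscr{B}}\setminus\mathcal{M}_{\mathscr{B}_{K}})\cap[1,\ell_{i}]|$, and here it is essential that $(\ell_{i})$ realizes the lower density of $\mathcal{M}_{\mathscr{B}}$: the normalized discrepancy then has $\limsup_{i}$ at most $\underline{d}(\mathcal{M}_{\mathscr{B}})-d(\mathcal{M}_{\mathscr{B}_{K}})$, which tends to $0$ as $K\to\infty$ by the Davenport--Erd\H{o}s identity~\eqref{DE}. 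For the $\eta^{*}$-coordinate the analogous discrepancy is controlled by $d(\Z\setminus\Per(\eta^{*},\lcm(\mathscr{B}^{*}_{K})))$, which tends to $0$ because $\eta^{*}$ is regular, see~\eqref{pozycje} (alternatively, unique ergodicity of $X_{\eta^{*}}$ already forces convergence of the $\eta^{*}$-marginal). Combining the two estimates along the \emph{same} subsequence and letting $K\to\infty$ yields the claim. I expect this simultaneous bookkeeping of the two coordinates' error terms to be the most delicate step, the number-theoretic input~\eqref{DE} being what makes the $\varphi$-error vanish in the limit.
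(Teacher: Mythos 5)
Your proposal is correct and follows essentially the same route as the paper's proof: reduce to cylinder sets, pull the empirical averages back to the rotation $(H,R)$ via the intertwining relations, replace $W$ and $W^{*}$ by the clopen truncations $W_{K}$ and $W_{K}^{*}$ (the latter pulled back through $\Gamma_{H,H^{*}}$) so that unique ergodicity of $R$ applies, and control the two error terms by the Davenport--Erd\H{o}s theorem together with the choice of $(\ell_{i})$ for the $\eta$-coordinate and by the Besicovitch property of $\mathscr{B}^{*}$ (forced by unique ergodicity of $X_{\eta^{*}}$) for the $\eta^{*}$-coordinate. The only caveat is that your primary appeal to regularity of $\eta^{*}$ is stronger than the lemma's hypothesis, but your parenthetical alternative via unique ergodicity is exactly what the paper uses, so no gap remains.
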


\begin{remark}\label{ttt}
Instead of $(\underline{\varphi}\otimes\varphi)_\ast(m_H)$ we will usually write $\nu_{\eta^*}\triangle \nu_{\eta}$.
In this subsection we will only use that $(\eta^*,\eta)$ is quasi-generic along $(\ell_i)$, while the specific form of the limit measure will be used later. Let us justify here our notation $\nu_{\eta^*}\triangle \nu_\eta$ and show that this is a certain off-diagonal joining with marginals $\nu_{\eta^*}$ and $\nu_\eta$. Indeed, by~\eqref{eqn:PhiGamma}, we have
\[
\nu_{\eta^*}\triangle\nu_\eta=((\varphi^*\circ \Gamma_{H,H^*})\otimes \varphi)_\ast(m_H).
\]
Notice that
\[
S:=\varphi^*\circ \Gamma_{H,H^*}\circ \varphi^{-1} \colon (\{0,1\}^\Z,\sigma,\nu_\eta)\to(\{0,1\}^\Z,\sigma,\nu_{\eta^*})
\]
is a factoring map. Moreover, for any measurable sets $A,B\subseteq \{0,1\}^\Z$, we have
\begin{multline*}
\triangle_S(A\times B)=\nu_\eta((\varphi^*\circ \Gamma_{H,H^*}\circ \varphi^{-1})^{-1}(A)\cap B)\\
=m_H(\varphi^{-1}(B)\cap (\varphi^*\circ \Gamma_{H,H^*})^{-1}(A))=((\varphi^*\circ \Gamma_{H,H^*})\otimes \varphi)_\ast(m_H)(A\times B)=\nu_{\eta^*}\triangle \nu_\eta(A\times B).
\end{multline*}
\end{remark}
Recall also that it was shown in~\cite{MR3803141} that
\begin{equation}\label{jeszczeto}
\eta \text{ and }\eta' \text{ differ along }(\ell_i)_{i\geq 1} \text{ on a subset of zero density},
\end{equation}
where $(\ell_i)_{i\geq 1}$ is any sequence realizing the lower density of $\mathcal{M}_{\mathscr{B}'}$. Moreover (see the proof of Lemma~4.11 in~\cite{MR3803141}) any sequence $(\ell_i)$ realizing the lower density of $\mathcal{M}_{\mathscr{B}'}$ is also realizing the lower density of $\mathcal{M}_\mathscr{B}$. For any such $(\ell_i)_{i\geq 1}$, also
\begin{equation}\label{dlatej}
(\eta^*,\eta') \text{ is quasi-generic for }\nu_{\eta^*}\triangle\nu_\eta\text{ along }(\ell_i),
\end{equation}
whenever $X_{\eta^*}$ is uniquely ergodic.

Let us now prove that~\eqref{diagBC} is indeed a dynamical diagram (and since it is linear, it is then commutative by Remark~\ref{rem:linDiagComm}). Notice that it suffices to show that for any measure of the form $N_\ast(\rho)$, where $\rho=(\nu_{\eta^*}\triangle \nu_\eta)\vee \kappa$ with $\kappa\in\mathcal{P}(\{0,1\}^\Z)$, we have $N_\ast(\rho)(X_{\varphi'})=1$. In order to see that this is indeed the case, fix such a measure $\rho$. It follows by Theorem~\ref{downar} and by~\eqref{dlatej} that $\rho$ has a quasi-generic point of the form $(\eta^*,\eta',y)$ with $y\in \{0,1\}^\Z$. Therefore, $z:=N(\eta^*,\eta',y)$ is quasi-generic for $N_\ast(\rho)$ and thus $N_\ast(\rho)(X_z)=1$. It remains to notice that $\eta^*\leq z\leq \eta'$. Thus,
\[
X_z \subseteq \overline{[\eta^{*},\eta']}= X_{\varphi'},
\]
where the last equality follows from Proposition~\ref{between_etas}.

\begin{proof}[Proof of Lemma~\ref{prop:PhiMHGeneric}]
Fix $(\ell_i)$ which realizes the lower density of $\mathcal{M}_{\mathscr{B}}$. By a pure measure theory argument (see the proof of Theorem 4.1 in~\cite{MR3428961}), we only
need to prove that 
\begin{equation*}
\frac{1}{\ell_i} \sum_{n\leq \ell_i} \delta_{ ( \sigma^n\underline{\varphi}(\Delta(0)),\sigma^n\varphi(\Delta(0)))}(\underline{A}\times A) \to (\underline{\varphi}\otimes \varphi)_\ast(m_H)(\underline{A}\times A)
\end{equation*}
for 
\begin{equation*}
\underline{A}=\{x\in \{0,1\}^\Z : x|_{\underline{S}} \equiv 0\} \text{ and }A=\{x\in \{0,1\}^\Z : x|_{S} \equiv 0\},
\end{equation*}
with $\underline{S},S\subseteq \Z$ being arbitrary finite sets. By $\sigma\circ \underline{\varphi}=\underline{\varphi}\circ R$ and $\sigma\circ \varphi=\varphi\circ R$, this is equivalent to proving that
\begin{equation*} 
\lim_{i\to\infty}\frac{1}{\ell_i}\sum_{n\leq \ell_i}\mathbf{1}_{\underline{\varphi}^{-1}(\underline{A})\times \varphi^{-1}(A)}(R^n (\Delta(0)),R^n (\Delta(0)))=(\underline{\varphi}\otimes \varphi)_\ast(m_H)(\underline{A}\times A).
\end{equation*}
The main underlying idea is to approximate $\underline{\varphi}^{-1}(\underline{A})$ and $\varphi^{-1}(A)$ by clopen sets, so that we can use the ergodicity properties of rotations. We will begin with the right-hand-side, as it is easier (the approximation of the left-hand-side requires the use of the Davenport-Erd\H{o}s theorem, i.e.~\eqref{DE}).

\paragraph{Approximation of the right-hand-side}
We have
\[
C:=\varphi^{-1}(A)=\bigcap_{s\in S}R^{-s}W^c.
\]
Let, for $K\geq 1$,
\[
W_K:=\{h\in H : h_b\neq 0 \text{ for all }b\in\mathscr{B}_K\}.
\]
Let
\begin{equation}\label{ceka}
C_K:=\bigcap_{s\in S}R^{-s}W_K^c .
\end{equation}
Each $W_K$ is clopen and $W_K \searrow W$ when $K\to\infty$. Thus, given $\varepsilon>0$, for $K$ large enough, we have
\begin{equation}\label{ny1}
m_H(C\triangle C_K)<\varepsilon.
\end{equation}

Recall from (\ref{eqn:PhiGamma}) that $\underline{\varphi}=\varphi^*\circ \Gamma_{H,H^*}$ and let
\[
\underline{C}:=\underline{\varphi}^{-1}(\underline{A})=\Gamma_{H,H^*}^{-1}((\varphi^*)^{-1}(\underline{A}))=\Gamma_{H,H^*}^{-1}\bigcap_{s\in\underline{S}}(R^*)^{-s}(W^*)^c=\bigcap_{s\in\underline{S}}\Gamma_{H,H^*}^{-1}(R^*)^{-s}(W^*)^c.
\]
Define $W_K^*$ in a similar way as $W_K$:
\[
W^*_K=\{h^*\in H^* : h^*_{b^*}\not\equiv 0 \text{ for all }b^*\in\mathscr{B}^*_K\}.
\]
Finally, let
\begin{equation}\label{ceka1}
\underline{C}_K:=\bigcap_{s\in\underline{S}}\Gamma_{H,H^*}^{-1}(R^*)^{-s}(W_{K}^*)^c.
\end{equation}
Then, for $K$ large enough,
\begin{equation}\label{ny2}
m_H(\underline{C}\triangle \underline{C}_K)<\varepsilon.
\end{equation}

Notice that
\[
(\underline{\varphi}\otimes \varphi)_\ast(m_H)(\underline{A}\times A)=m_H(\underline{\varphi}^{-1}(\underline{A})\cap \varphi^{-1}(A))=m_H(\underline{C}\cap C). 
\]
Thus, it follows by~\eqref{ny1} and~\eqref{ny2} that
\[
\left|(\underline{\varphi}\otimes \varphi)_\ast(m_H)(\underline{A}\times A) - m_H(\underline{C}_K\cap C_K) \right|\leq 2\varepsilon
\]
for $K$ sufficiently large.

\paragraph{Approximation of the left-hand-side}

Let $(\ell_i)_{i\geq 1}$ be a sequence realizing the lower density of $\mathcal{M}_{\mathscr{B}}$. By definition
\[
\frac{1}{\ell_i}\sum_{n\leq \ell_i}\mathbf{1}_{\underline{A}\times A}(\underline{\varphi}(R^n (\Delta(0))),\varphi(R^n (\Delta(0))))=\frac{1}{\ell_i}\sum_{n\leq \ell_i}\mathbf{1}_{\underline{C}}(R^n(\Delta(0)))\mathbf{1}_{C}(R^n(\Delta(0))).
\]
Moreover, for $K$ large enough
\begin{align*}
\begin{split}
\lim_{i\to\infty}&\left|\frac{1}{\ell_i}\sum_{n\leq \ell_i}\mathbf{1}_{\underline{C}}(R^n(\Delta(0)))\mathbf{1}_{C}(R^n(\Delta(0)))-\frac{1}{\ell_i}\sum_{n\leq \ell_i}\mathbf{1}_{\underline{C}}(R^n(\Delta(0)))\mathbf{1}_{C_K}(R^n(\Delta(0)))\right|\\
&\leq \lim_{i\to\infty}\frac{1}{\ell_i}\sum_{n\leq \ell_i}\mathbf{1}_{C\setminus C_K}(R^n(\Delta(0)))
\leq
\lim_{i\to\infty}\sum_{s\in S}\frac{1}{\ell_i}\sum_{n\leq \ell_i}\mathbf{1}_{W^c\setminus W_K^c}(R^{n+s}(\Delta(0))\\
&=|S| \cdot \lim_{i\to\infty} \frac{1}{\ell_i}\sum_{n\leq \ell_i}\mathbf{1}_{W^c\setminus W_K^c}(R^{n} (\Delta(0))) = |S| \cdot \lim_{i\to\infty} \frac{1}{\ell_{i}} |[1,\ell_i]\cap (\mathcal{M}_{\mathscr{B}}\setminus \mathcal{M}_{\mathscr{B}_K})|<\varepsilon,
\end{split}
\end{align*}
where the second inequality follows from
\[
C\setminus C_K \subseteq \bigcup_{s\in S}R^{-s}(W^c\setminus W_K^c),
\]
the last equality from
\[
R^n(\Delta(0)) \in W^c\setminus W_K^c \iff n\in \mathcal{M}_{\mathscr{B}}\setminus \mathcal{M}_{\mathscr{B}_K}
\]
and the last inequality is a consequence of the Davenport-Erd\H{o}s theorem (i.e.~\eqref{DE}) -- we use that $(\ell_i)_{i\geq 1}$ is a specific sequence and that $K$ is large only for this last inequality.

We will now use similar arguments for \( C_{K} \) and \( \underline{C}_{K} \) instead of \(  C \) and \( \underline{C} \). We have
\begin{align*}
\lim_{i\to\infty}&\left|\frac{1}{\ell_i}\sum_{n\leq \ell_i} \mathbf{1}_{\underline{C}}(R^n(\Delta(0)))\mathbf{1}_{C_K}(R^n(\Delta(0)))- \frac{1}{\ell_i}\sum_{n\leq \ell_i}\mathbf{1}_{\underline{C}_K}(R^n(\Delta(0)))\mathbf{1}_{C_K}(R^n(\Delta(0))) \right|\\
&\leq \lim_{k\to\infty}\frac{1}{\ell_i}\sum_{n\leq \ell_i}\mathbf{1}_{\underline{C}\setminus \underline{C}_K}(R^n(\Delta(0)))\\
&\leq |\underline{S}|\cdot \lim_{i\to \infty}\frac{1}{\ell_i}\sum_{n\leq \ell_i}\mathbf{1}_{\Gamma_{H,H^*}^{-1}((W^*)^c\setminus (W_K^*)^c)}(R^n(\Delta(0)))\\
&=|\underline{S}|\cdot \lim_{i\to\infty}\frac{1}{\ell_i}\sum_{n\leq \ell_i}\mathbf{1}_{(W^*)^c\setminus (W_K^*)^c}((R^*)^n(\Delta^*(0)))\\
&=|\underline{S}|\cdot d(\mathcal{M}_{\mathscr{B}^*}\setminus \mathcal{M}_{\mathscr{B}^*_K})<\varepsilon,
\end{align*}
where in the second inequality we used
\[
\underline{C}\setminus \underline{C}_K \subseteq \bigcup_{s\in\underline{S}} R^{-s}\Gamma_{H,H^*}^{-1}((W^*)^c\setminus (W_K^*)^c),
\]
the first equality follows from $\Gamma_{H,H^*}(R^n\Delta(0))=(R^*)^n \Gamma_{H,H^*}(\Delta(0))=(R^*)^n(\Delta^*(0))$, the second equality is a consequence of
\[
(R^*)^n(\Delta^*(0))\in (W^*)^c\setminus (W^*_K)^c\iff n\in \mathcal{M}_{\mathscr{B}^*}\setminus \mathcal{M}_{\mathscr{B}^*_K}
\]
and the last inequality follows by~\eqref{DE}, i.e.~the Davenport-Erd\H{o}s theorem (notice that we use here that $X_{\eta^*}$ is uniquely ergodic, so, in particular, $\mathscr{B}^*$ is Besicovitch and thus the density of $\mathcal{M}_{\mathscr{B}^*}$ along $(\ell_i)$ is just its natural density).

\paragraph{Convergence for clopen sets}
After the above reductions, it remains to prove that
\[
\lim_{i\to\infty}\frac{1}{\ell_i}\sum_{n\leq \ell_i}\mathbf{1}_{\underline{C}_K}(R^n(\Delta(0)))\mathbf{1}_{C_K}(R^n(\Delta(0)))=m_H(\underline{C}_K\cap C_K).
\]
However, both $C_K$ and $\underline{C}_K$ are clopen (recall~\eqref{ceka} and~\eqref{ceka1}) and thus the claim follows directly by the unique ergodicity of $R$.
\end{proof}

\subsubsection{\texorpdfstring{\eqref{diagBC}}{(D\_B,C)} is surjective}
This part of the proof relies mostly on certain natural periodic approximations of $\eta$ and $\eta^*$. More precisely, we will need a periodic approximation of $\eta$ from above and of $\eta^*$ from below.

For each $K\geq 1$, we set $\sB_K:=\{b\in\mathscr{B} : b\leq K\}$ and $\sB_K^*=\{b^*\in\sB^* : b^*\leq K\}$. We define $\varphi_K\colon H \to \{0,1\}^\Z$ by
\[
\varphi_K(h)(n)=1 \iff (R^{n} h)_{b} \neq 0\text{ for all }b\in\sB_K.
\]
Recall that there is a continuous group homomorphism \( \Gamma_{H,H^{*}}\colon H \to H^{*} \) with $\underline{\varphi}(h)=\varphi^*(\Gamma_{H,H^*}(h))$, see (\ref{eqn:PhiGamma}). We define $\underline{\varphi}_K\colon H\to \{0,1\}^\Z$ by
\begin{align*}
\underline{\varphi}_K(h)(n)&=\begin{cases}
\underline{\varphi}(h)(n)& \text{if }n\in \text{Per}(\underline{\varphi}(h),\lcm(\sB_K^*)),\\
0& \text{otherwise},\end{cases}\\
&= \begin{cases}
1 & \text{if }n\in \text{Per}(\varphi^*(\Gamma_{H,H^*}(h)), 1 , \lcm(\sB_K^*)),\\
0 & \text{otherwise}. \end{cases}
\end{align*}
Note that for every $h\in H$, we have 
\begin{equation}\label{order}
\underline{\varphi}_{K}(h) \leq \underline{\varphi}(h)\leq \varphi(h) \leq \varphi_{K}(h).
\end{equation}

\begin{lemma}\label{skonczenie}
For any $K\geq 1$, functions $\varphi_K$ and $\underline{\varphi}_K$ depend on a finite number of coordinates. In particular, they are continuous.
\end{lemma}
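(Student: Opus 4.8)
The plan is to prove the stronger statement that each of $\varphi_K$ and $\underline{\varphi}_K$ is determined by the values of $h$ in finitely many of the coordinates $b\in\sB$; continuity is then automatic, since the projection of $H$ onto any finite set of coordinates lands in a finite (hence discrete) group, so every fibre is clopen and any function of such a projection is continuous.

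For $\varphi_K$ I would argue directly. Since $R$ is the rotation by $\Delta(1)$, we have $(R^nh)_b=h_b+n\bmod b$, so $\varphi_K(h)(n)=1$ exactly when $h_b+n\not\equiv0\pmod b$ for every $b\in\sB_K$. As $\sB_K$ is finite, the whole sequence $\varphi_K(h)$ is a function of the tuple $(h_b)_{b\in\sB_K}$, which is what is needed.

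The real work is with $\underline{\varphi}_K$, and here the plan runs as follows. Write $h^*=\Gamma_{H,H^*}(h)$ and $L=\lcm(\sB_K^*)$. By~\eqref{eqn:PhiGamma}, $\underline{\varphi}(h)=\varphi^*(h^*)$, and $\varphi^*(h^*)(m)=0$ precisely when $m\equiv-h^*_{b^*}\pmod{b^*}$ for some $b^*\in\sB^*$; thus the zero set of $\underline{\varphi}(h)$ is the union of the progressions $-h^*_{b^*}+b^*\Z$. A position $n$ lies in $\Per(\underline{\varphi}(h),1,L)$ exactly when $n+L\Z$ avoids all of them, and by the Chinese Remainder Theorem the progression $n+L\Z$ meets $-h^*_{b^*}+b^*\Z$ iff $n+h^*_{b^*}\equiv0\pmod{\gcd(L,b^*)}$. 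Hence
\[
\underline{\varphi}_K(h)(n)=1\iff n+h^*_{b^*}\not\equiv0\pmod{\gcd(L,b^*)}\ \text{ for every }b^*\in\sB^*.
\]

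The main obstacle is now visible: this is a condition ranging over all (infinitely many) $b^*\in\sB^*$, and for $b^*>K$ it appears to involve the far-away coordinate $h^*_{b^*}$, so it is not obvious that only finitely many coordinates matter. The resolution — the crux of the proof — is that only the residue $h^*_{b^*}\bmod\gcd(L,b^*)$ enters, and this residue is already pinned down by the finitely many coordinates $(h^*_{b^*})_{b^*\in\sB_K^*}$. Indeed, distributivity of the divisibility lattice gives $\gcd(L,b^*)=\lcm_{c\in\sB_K^*}\gcd(c,b^*)$, while membership $h^*\in H^*=\overline{\Delta^*(\Z)}$ forces the compatibility $h^*_{b^*}\equiv h^*_c\pmod{\gcd(c,b^*)}$ for each $c\in\sB_K^*$ (a closed condition holding on the diagonal, hence on its closure); gluing these congruences via CRT determines $h^*_{b^*}\bmod\gcd(L,b^*)$ from $(h^*_c)_{c\in\sB_K^*}$. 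Therefore $\underline{\varphi}_K(h)$ is a function of the finite tuple $(h^*_{b^*})_{b^*\in\sB_K^*}$ alone, and by Lemma~\ref{W3} each $h^*_{b^*}$ equals $h_b\bmod b^*$ for a suitable $b\in\sB$; so $\underline{\varphi}_K$ too depends on only finitely many coordinates of $h$. With both factorizations in hand, continuity follows as in the first paragraph.
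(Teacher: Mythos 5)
Your proof is correct, and for the substantive part (the map $\underline{\varphi}_K$) it takes a genuinely different route from the paper's. Both arguments reduce the problem to showing that $\Per(\varphi^*(h^*),1,\lcm(\sB_K^*))$ is determined by the finitely many coordinates $(h^*_{b^*})_{b^*\in\sB_K^*}$ and then pull this back to finitely many coordinates of $h$ via Lemma~\ref{W3}; the difference lies in how that determination is established. The paper argues softly: given $h_1^*,h_2^*$ agreeing on $\sB_K^*$, it approximates $h_2^*-h_1^*$ by $\Delta^*(n_k)$ with $s^*\mid n_k$, uses the $s^*$-periodicity of the period set to see that these shifts fix $\Per(\varphi^*(h_1^*),1,s^*)$, and then transfers the $1$'s to $h_2^*$ by the coordinatewise upper semicontinuity of $\varphi^*$. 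You instead compute the periodic part explicitly: $n\in\Per(\varphi^*(h^*),1,L)$ iff $\gcd(L,b^*)\nmid n+h^*_{b^*}$ for every $b^*\in\sB^*$, and then you show the relevant residues $h^*_{b^*}\bmod\gcd(L,b^*)$ are pinned down by $(h^*_c)_{c\in\sB_K^*}$ using the distributivity identity $\gcd(L,b^*)=\lcm_{c\in\sB_K^*}\gcd(c,b^*)$, the compatibility congruences $h^*_{b^*}\equiv h^*_c\pmod{\gcd(c,b^*)}$ forced by $h^*\in\overline{\Delta^*(\Z)}$, and CRT. All of these steps check out (in particular the solvability criterion for $n+L\Z$ meeting $-h^*_{b^*}+b^*\Z$, and the fact that the infinite conjunction over $b^*\in\sB^*$ is still a well-defined function of the finite tuple). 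Your version buys an explicit closed formula for $\underline{\varphi}_K$ in terms of congruences, at the cost of more elementary number theory; the paper's version is shorter and avoids the arithmetic entirely, but gives no formula. Note also that the paper's proof tacitly needs the existence of a sequence $\Delta^*(n_k)\to h_2^*-h_1^*$ with $s^*\mid n_k$, a point your argument sidesteps completely.
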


\begin{proof}
For $\varphi_K$ the assertion is clear by the very definition. Let us now turn to $\underline{\varphi}_K$. To shorten notation, we will write \( h^*=\Gamma_{H,H^*}(h) \) and $s^{*}=\lcm(\sB_K^*)$. We will show that \( \text{Per}(\varphi^*(h^{*}_{1}), 1 , s^{*}) =  \text{Per}(\varphi^*(h^{*}_{2}), 1 , s^{*}) \) whenever \( h^{*}_{1} \) and \( h^{*}_{2} \) agree on \( \sB_{K}^{*} \). Since there exists \( L \in \N \) such that every \( b^{*} \in \sB^{*}_{K} \) divides some \( b \in \sB_{L} \), by  Lemma~\ref{W3} it then follows that $\underline{\varphi}_{K}( h )$ is determined by \( ( h_{b} )_{b \in B_{L}} \). To see that \( \text{Per}(\varphi^*(h^{*}), 1 , s^{*}) \) depends only on \( \sB_{K}^{*} \), take \( h_{1}^{*} , h_{2}^{*} \in H^{*} = \overline{\Delta^{*}( \Z )} \) with \( ( h_{2}^{*} - h_{1}^{*} )_{b^{*}} = 0 \) for all \( b^{*} \in \sB_{K}^{*} \). Then there exists a sequence \( (n_{k}) \) with \( \Delta^{*}( n_{k} ) \to  h_{2}^{*} - h_{1}^{*} \) and \( \lcm( \sB^{*}_{K} ) = s^{*} \mid n_{k} \). We notice that
\[ \text{Per}(\varphi^*( h^{*}_{1} + \Delta^{*}( n_{k} ) ), 1 , s^{*}) = \text{Per}( \sigma^{n_{k}} \varphi^*( h^{*}_{1} ), 1 , s^{*}) = \text{Per}(\varphi^*(h^{*}_{1} ), 1 , s^{*}) - n_{k} = \text{Per}(\varphi^*( h^{*}_{1} ), 1 , s^{*}) , \]
since \( \text{Per}(\varphi^*( h^{*}_{1} ), 1 , s^{*}) \) is an \( s^{*} \)-periodic set. In particular, for every \( j \in \text{Per}(\varphi^*( h^{*}_{1} ), 1 , s^{*}) \) we get \( \varphi^*( h^{*}_{1} + \Delta^{*}( n_{k} ) )( j ) = 1 \) for all \( k \). Since \( h^{*}_{1} + \Delta^{*}( n_{k} ) \) converges to \( h_{2}^{*} \), and \( \varphi^{*} \) is coordinatewise upper semicontinuous, this yields \( \varphi^*( h^{*}_{2} )( j ) = 1 \) for all \( j \in \text{Per}(\varphi^*( h^{*}_{1} ), 1 , s^{*}) \) and hence \( \text{Per}(\varphi^*( h^{*}_{1} ), 1 , s^{*}) \subseteq \text{Per}(\varphi^*( h^{*}_{2} ), 1 , s^{*}) \). By the symmetry between \( h^{*}_{1} \) and \( h^{*}_{2} \), the converse inclusion follows from the same argument, thus proving the claim. 
\end{proof}

Similar to \( [\underline{\varphi} ,\varphi ] \), we define \( [\underline{\varphi}_K,\varphi_K] :=\{x\in \{0,1\}^Z : \underline{\varphi}_K(h)\leq x\leq\varphi_K(h)\text{ for some }h\in H\} \).

\begin{lemma}\label{propo9}
The set \( [\underline{\varphi}_K,\varphi_K] \subseteq \{ 0 , 1 \}^{\Z} \) is a subshift.
\end{lemma}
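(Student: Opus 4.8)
The plan is to verify the two defining properties of a subshift: that $[\underline{\varphi}_K,\varphi_K]$ is closed in $\{0,1\}^\Z$ and that it is invariant under $\sigma$. The whole point of passing to the truncated maps $\varphi_K$ and $\underline{\varphi}_K$ is that, unlike $\varphi$ and $\underline{\varphi}$ (which are only semicontinuous, so that $[\underline{\varphi},\varphi]$ need not be closed), they are \emph{continuous} by Lemma~\ref{skonczenie}. This continuity, together with the compactness of $H$, is exactly what makes the closedness argument go through, and I expect it to be the crux of the proof.

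For closedness, the cleanest route I would take is to realize $[\underline{\varphi}_K,\varphi_K]$ as a continuous image of a compact set. Consider
\[
\Omega:=\{(h,x)\in H\times\{0,1\}^\Z : \underline{\varphi}_K(h)\leq x\leq \varphi_K(h)\}.
\]
For each fixed $n\in\Z$, the conditions $\underline{\varphi}_K(h)(n)\leq x(n)$ and $x(n)\leq \varphi_K(h)(n)$ each define a clopen subset of $H\times\{0,1\}^\Z$, because $\varphi_K,\underline{\varphi}_K$ are continuous and coordinate evaluation $x\mapsto x(n)$ is continuous into the discrete space $\{0,1\}$. Intersecting over all $n$ shows that $\Omega$ is closed, hence compact as a closed subset of the compact space $H\times\{0,1\}^\Z$. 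Since $[\underline{\varphi}_K,\varphi_K]=\pi_2(\Omega)$, where $\pi_2\colon H\times\{0,1\}^\Z\to\{0,1\}^\Z$ is the (continuous) projection onto the second coordinate, it follows that $[\underline{\varphi}_K,\varphi_K]$ is compact, and in particular closed.

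It remains to check $\sigma$-invariance, which reduces to the equivariance relations $\sigma\circ\varphi_K=\varphi_K\circ R$ and $\sigma\circ\underline{\varphi}_K=\underline{\varphi}_K\circ R$. The first is immediate from the definition of $\varphi_K$, since $\varphi_K(Rh)(n)=1\iff (R^{n+1}h)_b\neq 0$ for all $b\in\sB_K$, i.e.\ $\varphi_K(Rh)=\sigma\varphi_K(h)$. For the second, I would use $\underline{\varphi}=\varphi^*\circ\Gamma_{H,H^*}$ together with $\varphi^*\circ R^*=\sigma\circ\varphi^*$ and the fact that $\Gamma_{H,H^*}$ conjugates $R$ to $R^*$, combined with the elementary identity $\Per(\sigma y,1,s^*)=\Per(y,1,s^*)-1$ for the $s^*$-periodic parts; these give $\underline{\varphi}_K(Rh)=\sigma\underline{\varphi}_K(h)$. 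Granting these, if $x\in[\underline{\varphi}_K,\varphi_K]$ is witnessed by $h\in H$, then $\underline{\varphi}_K(Rh)\leq\sigma x\leq\varphi_K(Rh)$ with $Rh\in H$, and likewise for $R^{-1}h$, so $\sigma^{\pm1}x\in[\underline{\varphi}_K,\varphi_K]$. Together with the closedness above, this shows that $[\underline{\varphi}_K,\varphi_K]$ is a subshift. The only genuinely delicate point is the closedness, and as noted it rests entirely on the continuity furnished by Lemma~\ref{skonczenie}; everything else is routine.
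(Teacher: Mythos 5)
Your proposal is correct and follows essentially the same route as the paper: closedness from the continuity of $\varphi_K$ and $\underline{\varphi}_K$ (Lemma~\ref{skonczenie}), and $\sigma$-invariance from the equivariance relations $\varphi_K\circ R=\sigma\circ\varphi_K$ and $\underline{\varphi}_K\circ R=\sigma\circ\underline{\varphi}_K$. The paper states the closedness as immediate, whereas you spell it out via the compact set $\Omega$ and its projection; that is a correct elaboration of the same idea, not a different argument.
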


\begin{proof}
That \( [\underline{\varphi}_K,\varphi_K] \) is closed follows immediately from the continuity of $\varphi_K$ and $\underline{\varphi}_K$. In addition, it is $\sigma$-invariant as 
\[
\varphi_K\circ R=\sigma \circ \varphi_K \text{ and }\underline{\varphi}_K\circ R=\sigma \circ \underline{\varphi}_K.
\]
Indeed, the first equality holds as $\varphi_K$ is a coding of orbits of points in $H$ with respect to $\{h\in H : h_{b}\neq 0 \text{ for all }b\in\sB_K\}$. The second equality follows from the definition of \( \underline{\varphi}_{K} \) in terms of \(\underline{\varphi}\), the equality $\underline{\varphi}\circ R=\sigma\circ \underline{\varphi}$ (since $\underline{\varphi}$ is a coding) and $\Per(\sigma x,s)=\Per(x,s)-1$ for $x$ in the orbit closure of a Toeplitz sequence.
\end{proof}

We set $ \eta_K := \varphi_{K}(\Delta(0))=\mathbf{1}_{\mathcal{F}_{\mathscr{B}_K}}$ and \( \underline{\eta}_K := \underline{\varphi}_K(\Delta(0)) \). Then \(\underline{\eta}_K\leq \eta^*\leq \eta\leq \eta_K\) (this is a special case of~\eqref{order} for $h = \Delta(0)$).
\begin{lemma}\label{lm13}
Let $\mathscr{B}\subseteq \N$ and suppose that $\eta^*$ is a regular Toeplitz sequence. Let $(\ell_{i})$ be a sequence realizing the lower density of $\mathcal{M}_{\mathscr{B}}$. Then 
\[
\lim_{K\to\infty}\overline{d}_{(\ell_{i})}(\{n\in \N : (\underline{\eta}_{K}(n),\eta_{K}(n))\neq (\eta^*(n),\eta(n))\})=0.
\]
\end{lemma}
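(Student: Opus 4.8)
The plan is to split the ``bad set''
\[
B_K:=\{n\in\N : (\underline{\eta}_K(n),\eta_K(n))\neq(\eta^*(n),\eta(n))\}
\]
into the two coordinatewise pieces coming from the lower and the upper approximation. Since $\underline{\eta}_K\leq\eta^*\leq\eta\leq\eta_K$ holds coordinatewise (the special case of~\eqref{order} at $h=\Delta(0)$), a pair can disagree only where $\underline{\eta}_K(n)<\eta^*(n)$ or where $\eta(n)<\eta_K(n)$; hence
\[
B_K\subseteq\{n:\underline{\eta}_K(n)=0,\ \eta^*(n)=1\}\cup\{n:\eta(n)=0,\ \eta_K(n)=1\}.
\]
By subadditivity of the upper density $\overline{d}_{(\ell_i)}$ along $(\ell_i)$, it then suffices to show that the density of each of these two sets tends to $0$ as $K\to\infty$.

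For the upper piece I would use that $\eta_K=\mathbf{1}_{\mathcal{F}_{\mathscr{B}_K}}$ and $\eta=\mathbf{1}_{\mathcal{F}_\mathscr{B}}$, so that $\{n:\eta(n)=0,\ \eta_K(n)=1\}=\mathcal{M}_\mathscr{B}\setminus\mathcal{M}_{\mathscr{B}_K}$. Because $\mathcal{M}_{\mathscr{B}_K}$ is $\lcm(\mathscr{B}_K)$-periodic, its density exists and is realized along any sequence, while $\tfrac{1}{\ell_i}|\mathcal{M}_\mathscr{B}\cap[1,\ell_i]|\to\underline{d}(\mathcal{M}_\mathscr{B})$ by the choice of $(\ell_i)$. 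Thus the density of this set along $(\ell_i)$ is the genuine limit $\underline{d}(\mathcal{M}_\mathscr{B})-d(\mathcal{M}_{\mathscr{B}_K})$, which converges to $0$ as $K\to\infty$ by the Davenport--Erd\H{o}s theorem~\eqref{DE}. This is exactly the estimate already used in the proof of Lemma~\ref{prop:PhiMHGeneric}.

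For the lower piece I would unwind the definition of $\underline{\varphi}_K$ at $h=\Delta(0)$: there $\underline{\eta}_K$ coincides with $\eta^*$ on $\Per(\eta^*,\lcm(\mathscr{B}_K^*))$ and vanishes off it, so $\{n:\underline{\eta}_K(n)=0,\ \eta^*(n)=1\}\subseteq\Z\setminus\Per(\eta^*,\lcm(\mathscr{B}_K^*))$. As $\Per(\eta^*,s)$ is an $s$-periodic set, its complement again has a density realized along $(\ell_i)$, and the regularity of $\eta^*$ --- in the form of the characterization~\eqref{pozycje} --- gives $d(\Z\setminus\Per(\eta^*,\lcm(\mathscr{B}_K^*)))\to0$ as $K\to\infty$. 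Combining the two pieces yields $\lim_{K\to\infty}\overline{d}_{(\ell_i)}(B_K)=0$.

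The argument is essentially bookkeeping and I do not expect a serious obstacle; the one point requiring care is that both limits are taken along the \emph{fixed} sequence $(\ell_i)$. For the upper piece this is essential --- Davenport--Erd\H{o}s only guarantees $\underline{d}(\mathcal{M}_\mathscr{B})=\lim_K d(\mathcal{M}_{\mathscr{B}_K})$ together with convergence of $\tfrac{1}{\ell_i}|\mathcal{M}_\mathscr{B}\cap[1,\ell_i]|$ along a density-realizing sequence --- whereas for the lower piece the relevant sets are periodic, so any sequence works and the sole input is the regularity assumption. In short, the periodicity of $\mathcal{M}_{\mathscr{B}_K}$ and of $\Per(\eta^*,\lcm(\mathscr{B}_K^*))$ is what lets one pass from $\overline{d}_{(\ell_i)}$ to honest limits, and the hypothesis that $\eta^*$ is a regular Toeplitz sequence enters only (but crucially) through~\eqref{pozycje}.
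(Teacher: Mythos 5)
Your proposal is correct and follows essentially the same route as the paper: both split the bad set into the piece where $\eta_K$ and $\eta$ disagree (handled by the Davenport--Erd\H{o}s theorem \eqref{DE} along the density-realizing sequence $(\ell_i)$) and the piece where $\underline{\eta}_K$ and $\eta^*$ disagree (contained in $\Z\setminus\Per(\eta^*,\lcm(\mathscr{B}_K^*))$, whose density vanishes by the regularity characterization \eqref{pozycje}). Your additional remarks on the one-sidedness of the disagreements and on why periodicity lets one replace $\overline{d}_{(\ell_i)}$ by an honest limit are correct but not needed beyond what the paper already records.
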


\begin{proof}
It suffices to notice that
\[
\lim_{K\to\infty}\overline{d}_{(\ell_i)}(\{n\in \N : \eta_{K}(n)\neq \eta(n)\})=0
\]
and
\[\lim_{K\to \infty}\overline{d}_{(\ell_i)}(\{n\in \N : \underline{\eta}_{K}(n)\neq \eta^*(n)\})=0.\]
The first assertion follows by the Davenport-Erd\H{o}s theorem (i.e.\ by~\eqref{DE}). For the second notice that \(\underline{\eta}_K(n)\neq \eta^*(n)\) implies that $n\not\in \operatorname{Per}(\eta^*,\lcm(\mathscr{B}_K^*))$. Thus,
\begin{equation}\label{nowe}
\lim_{K\to \infty}\overline{d}(\{n\in \N : \underline{\eta}_{K}(n)\neq \eta^*(n)\})\leq \lim_{K\to\infty}{d}\left(\Z\setminus \operatorname{Per}(\eta^*,\lcm(\mathscr{B}_K^*))\right)=0
\end{equation}
as $\eta^*$ is a regular Toeplitz sequence, cf.~\eqref{pozycje}.
\end{proof}

We will also need the following well-known fact related to quasi-generic points and the corresponding invariant measures (we skip its proof and refer the reader e.g.~to Appendix~C in~\cite{doktorat}, see also~\cite{MR3669782}).
\begin{proposition}\label{pr12}
Let $\mathcal{A}$ be a finite alphabet and suppose that $(\ell_{i}) \subseteq \N$ is an increasing sequence and that $x_K\in \mathcal{A}^\Z$ for $K\geq 1$ and $x\in  \mathcal{A}^\Z$ are such that 
\[
\lim_{K\to \infty} \overline{d}_{(\ell_{i})}(\{n\in\N : x_K(n)\neq x(n)\})=0.
\]
Suppose additionally that $x_K$, $K\geq 1$, and $x$ are quasi-generic along $(\ell_{i})$ for measures $\nu_K$, $K\geq 1$ and $\nu$, respectively. Then $\nu_K \to \nu$ in the weak topology.
\end{proposition}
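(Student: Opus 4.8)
The plan is to reduce the asserted weak convergence $\nu_K\to\nu$ to the numerical convergence $\nu_K(C)\to\nu(C)$ for every cylinder set $C$, and then to bound the difference $\nu_K(C)-\nu(C)$ by the density of the positions where $x_K$ and $x$ disagree. Since $\mathcal{A}^\Z$ is a compact metric space and the locally constant functions (finite linear combinations of indicators of cylinders) are dense in $C(\mathcal{A}^\Z)$, while all the $\nu_K$ and $\nu$ are probability measures, a standard three-$\varepsilon$ argument shows that it suffices to prove $\int f\,d\nu_K\to\int f\,d\nu$ for $f=\mathbf{1}_C$ with $C$ a cylinder. As cylinders are clopen, $\mathbf{1}_C$ is continuous, so quasi-genericity along $(\ell_i)$ applies to it directly.

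Fix a cylinder $C$ depending only on the coordinates in a finite set $F\subseteq\Z$. By the quasi-genericity of $x_K$ and of $x$ along $(\ell_i)$ the limits
\[
\nu_K(C)=\lim_{i\to\infty}\frac{1}{\ell_i}|\{n\leq\ell_i:\sigma^n x_K\in C\}|, \qquad \nu(C)=\lim_{i\to\infty}\frac{1}{\ell_i}|\{n\leq\ell_i:\sigma^n x\in C\}|
\]
exist. The key observation is that $\sigma^n x_K\in C\iff\sigma^n x\in C$ whenever $x_K$ and $x$ agree on the entire window $n+F$. Hence, writing $D_K=\{n:x_K(n)\neq x(n)\}$, the symmetric difference of the two counting sets above is contained in $\bigcup_{j\in F}(D_K-j)$, up to a boundary set of $O(1)$ positions coming from windows that reach non-positive coordinates, which is negligible after division by $\ell_i$.

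I would then bound $\frac{1}{\ell_i}\,|\bigcup_{j\in F}(D_K-j)\cap[1,\ell_i]|$ from above. Each translate satisfies $\overline{d}_{(\ell_i)}(D_K-j)=\overline{d}_{(\ell_i)}(D_K)$, because shifting by the fixed $j$ alters the count on $[1,\ell_i]$ by at most $2|j|=o(\ell_i)$; subadditivity of $\overline{d}_{(\ell_i)}$ then gives
\[
\limsup_{i\to\infty}\frac{1}{\ell_i}\Big|\bigcup_{j\in F}(D_K-j)\cap[1,\ell_i]\Big|\leq |F|\cdot\overline{d}_{(\ell_i)}(D_K).
\]
Since both defining limits exist, the difference of the pre-limit counts converges, and its absolute value is dominated by this $\limsup$; therefore $|\nu_K(C)-\nu(C)|\leq |F|\cdot\overline{d}_{(\ell_i)}(D_K)$. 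By hypothesis the right-hand side tends to $0$ as $K\to\infty$, which yields $\nu_K(C)\to\nu(C)$ and completes the reduction.

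The only genuinely delicate point is the interplay between the three limiting procedures involved: the limit $i\to\infty$ defining each $\nu_K(C)$ and $\nu(C)$, the limit $K\to\infty$, and the $\limsup$ concealed in $\overline{d}_{(\ell_i)}$. The argument sidesteps any diagonal extraction by first fixing $K$ and extracting a bound on $|\nu_K(C)-\nu(C)|$ whose dependence on $C$ enters only through the finite cardinality $|F|$ and which decays with $K$ uniformly in the remaining data. The translation-invariance of $\overline{d}_{(\ell_i)}$ under a fixed finite shift and the negligibility of the non-positive boundary coordinates are routine, but I would state them explicitly since they are exactly where the finiteness of $F$ and the subsequence structure are used.
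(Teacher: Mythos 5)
Your proof is correct. The paper itself omits the proof of Proposition~\ref{pr12}, deferring to external references, and your argument --- reducing weak convergence to convergence on clopen cylinders and bounding $|\nu_K(C)-\nu(C)|$ by $|F|\cdot\overline{d}_{(\ell_i)}(\{n:x_K(n)\neq x(n)\})$ via the empirical counts along $(\ell_i)$, with the translates and the finitely many boundary windows handled as you indicate --- is exactly the standard argument those references use.
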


Last but not least, we will need the following result in order to pass from the description of ergodic measures to that of all invariant measures on $X_\eta$.
\begin{proposition}\label{arsenin}
Suppose that for a subshift $X\subseteq \{0,1\}^\Z$, we have
\(
\mathcal{P}^e(X)\subseteq \{N_\ast((\nu_{\eta^*}\triangle \nu_\eta)\vee\kappa) : \kappa\in\mathcal{P}(\{0,1\}^\Z)\}.
\)
Then
\(
\mathcal{P}(X)\subseteq \{N_\ast((\nu_{\eta^*}\triangle \nu_\eta)\vee\kappa) : \kappa\in\mathcal{P}(\{0,1\}^\Z)\}.
\)
\end{proposition}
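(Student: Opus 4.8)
The plan is to show that the target set
\[
\mathcal{C}:=\{N_\ast((\nu_{\eta^*}\triangle\nu_\eta)\vee\kappa):\kappa\in\mathcal{P}(\{0,1\}^\Z)\}
\]
is a \emph{compact convex} subset of $\mathcal{P}(\{0,1\}^\Z)$, and then to conclude by the elementary fact that $\mathcal{P}(X)$ is the closed convex hull of its ergodic measures. This avoids any measurable selection of joinings, which is what makes a direct ergodic-decomposition argument unpleasant.

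First I would rewrite $\mathcal{C}$ in a form where compactness is transparent. Recall that $(\nu_{\eta^*}\triangle\nu_\eta)\vee\kappa$ denotes an \emph{arbitrary} joining of the \emph{fixed} invariant measure $\nu_{\eta^*}\triangle\nu_\eta\in\mathcal{P}((\{0,1\}^\Z)^2,\sigma\times\sigma)$ with $\kappa$. Hence, letting $\kappa$ vary over all of $\mathcal{P}(\{0,1\}^\Z)$ and taking all joinings is the same as letting $\rho$ range over
\[
\mathcal{J}:=\{\rho\in\mathcal{P}((\{0,1\}^\Z)^3,\sigma^{\times 3}):(\pi_{1,2})_\ast\rho=\nu_{\eta^*}\triangle\nu_\eta\},
\]
so that $\mathcal{C}=N_\ast(\mathcal{J})$.

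Next I would verify that $\mathcal{J}$ is compact and convex, and transport these properties through $N_\ast$. Convexity of $\mathcal{J}$ is immediate, as both $\sigma^{\times 3}$-invariance and the marginal constraint $(\pi_{1,2})_\ast\rho=\nu_{\eta^*}\triangle\nu_\eta$ are affine conditions on $\rho$. For compactness, note that $\mathcal{P}((\{0,1\}^\Z)^3)$ is weak-$\ast$ compact, the set of $\sigma^{\times 3}$-invariant measures is closed, and $\rho\mapsto(\pi_{1,2})_\ast\rho$ is weak-$\ast$ continuous, so the marginal constraint carves out a closed, hence compact, subset. Since $N$ is continuous (coordinatewise, $N(w,x,y)(n)$ equals $w(n)$ or $x(n)$ according to whether $y(n)$ is $0$ or $1$), the pushforward $N_\ast$ is continuous and affine; therefore $\mathcal{C}=N_\ast(\mathcal{J})$ is again compact and convex.

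Finally I would conclude. The set $\mathcal{P}(X)$ is a compact convex subset of $\mathcal{P}(\{0,1\}^\Z)$ whose extreme points are exactly the ergodic measures $\mathcal{P}^e(X)$; by the Krein--Milman theorem, $\mathcal{P}(X)=\overline{\mathrm{conv}}(\mathcal{P}^e(X))$. By hypothesis $\mathcal{P}^e(X)\subseteq\mathcal{C}$, and since $\mathcal{C}$ is closed and convex it contains $\overline{\mathrm{conv}}(\mathcal{P}^e(X))$; thus $\mathcal{P}(X)\subseteq\mathcal{C}$, as claimed. The only point genuinely requiring care is the reformulation $\mathcal{C}=N_\ast(\mathcal{J})$ together with the closedness of $\mathcal{J}$: this is precisely what upgrades $\mathcal{C}$ from a mere image of measures to a \emph{closed} convex set, after which the statement is formal. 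I expect establishing this closedness (rather than the final Krein--Milman step) to be the main thing to get right.
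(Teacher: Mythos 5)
Your proof is correct, but it takes a genuinely different route from the paper. The paper does not spell out a proof at all: it refers to an argument based on the ergodic decomposition combined with the Arsenin--Kunugui measurable selection theorem, i.e.\ one decomposes $\mu\in\mathcal{P}(X)$ into ergodic components $\mu_\omega$, measurably selects for each $\omega$ a joining $\rho_\omega$ with $(\pi_{1,2})_\ast\rho_\omega=\nu_{\eta^*}\triangle\nu_\eta$ and $N_\ast\rho_\omega=\mu_\omega$, and integrates the selection to produce the representing joining for $\mu$. You instead observe that the target set is $N_\ast(\mathcal{J})$ with $\mathcal{J}=\{\rho\in\mathcal{P}((\{0,1\}^\Z)^3,\sigma^{\times 3}):(\pi_{1,2})_\ast\rho=\nu_{\eta^*}\triangle\nu_\eta\}$ compact and convex, so that its continuous affine image is closed and convex, and then invoke Krein--Milman via $\mathcal{P}(X)=\overline{\mathrm{conv}}(\mathcal{P}^e(X))$. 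All the individual steps check out: the identification of the target set with $N_\ast(\mathcal{J})$ is exactly the paper's notion of joining with the first marginal fixed, closedness of $\mathcal{J}$ follows from weak-$\ast$ continuity of $(\pi_{1,2})_\ast$, and $N$ is continuous since each output coordinate depends on a single triple of input coordinates. Your argument is shorter and avoids descriptive set theory entirely; what the selection approach buys in exchange is a representing joining that is explicitly disintegrated over the ergodic decomposition (useful if one wants to track the ergodic components of $\rho$ itself, as in the more general forthcoming result the paper cites), and it continues to work when the representing map is merely Borel or the constraint set fails to be compact --- neither of which is needed here.
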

We skip the proof -- it is a repetition (with obvious changes such as replacing the map $M$ by $N$ and the Mirsky measure $\nu_\eta$ by the joining $\nu_{\eta^*}\triangle\nu_\eta$) of the proof of an analogous part of Theorem~4.1.23 in~\cite{doktorat} (more specifically, see page 66 therein). The main tool there is the ergodic decomposition and the Arsenin-Kunungui theorem on measurable selection (see, e.g., Theorem 18.18 in~\cite{MR1321597}). A more general result (with a detailed proof) will be published in~\cite{KLR}.

Now, fix $\nu\in\mathcal{P}^{e}(\overline{[\underline{\varphi},\varphi]}) = \mathcal{P}^e([\underline{\varphi},\varphi])$, see \eqref{etaregular}. Since $\nu$ is ergodic, since $[ \underline{\varphi} , \varphi ]\subseteq [\underline{\varphi}_K,\varphi_K]$ and the latter set is a subshift by Lemma~\ref{propo9}, there exists a generic point $u_K\in [\underline{\varphi}_K,\varphi_K]$ for $\nu$. Without loss of generality, we can assume
\[
\underline{\eta}_K\leq u_K\leq \eta_K.
\]
(Indeed, since $u_K\in [\underline{\varphi}_K,\varphi_K]$ there exists \( h \in H \) with \( \underline{\varphi}_K(h)\leq u_K\leq \varphi_K(h) \). If $j\in \N$ is such that $h_{b}+j=0  \bmod \ b$  for all $b$ in a sufficiently large, finite subset of $\sB$, then Lemma~\ref{skonczenie} shows \(
\underline{\varphi}_K(\Delta(0)) \leq \sigma^ju_K\leq \varphi_K( \Delta(0) ) \), where $\sigma^ju_K$ is generic for $\nu$.) Thus, there exits \( y_{K} \in \{ 0,1 \}^{\Z} \) such that \( u_K=N(\underline{\eta}_K,\eta_K,y_K) \). Notice that \( (\underline{\eta}_K,\eta_K,y_K) \) is quasi-generic for some measure \( \rho_K \). Using the periodicity of \( \underline{\eta}_{K} \) and \( \eta_K \), we hence obtain that $(\underline{\eta}_K,\eta_K)$ is generic for $(\pi_{1,2})_\ast(\rho_K)$. In addition, \( ( \eta^{*} , \eta ) \) is quasi-generic along \( (\ell_{i}) \) for \( \nu_{\eta^*}\triangle \nu_\eta\) by \eqref{jeszczeto} and \eqref{dlatej}. Thus, Proposition~\ref{pr12} and Lemma~\ref{lm13} yield
\[
(\pi_{1,2})_{\ast}(\rho_K) \to \nu_{\eta^*}\triangle \nu_\eta .
\]
If \( \rho \) is a limit of \( \rho_{K} \), it follows that $(\pi_{1,2})_\ast(\rho) =  \nu_{\eta^*}\triangle \nu_\eta $, so \( \rho \) is of the form \( \rho = (\nu_{\eta^*}\triangle \nu_\eta) \vee \kappa \) for some \( \kappa \in \mathcal{P}( \{ 0 , 1 \}^{\Z} ) \). Finally, since \( (\underline{\eta}_K,\eta_K,y_K) \) is quasi-generic for \( \rho_K \), it follows that \( u_K=N(\underline{\eta}_K,\eta_K,y_K) \) is quasi-generic for \( N_{*} (\rho_K) \). However, by assumption, \( u_{K} \) is also generic for \( \nu \), which yields \(\nu = N_{*} (\rho_K) \) for all \( K \in \N \), and thus \(\nu = N_{*}(\rho) \in \{ N_{\ast}( (\nu_{\eta^{*}} \triangle \nu_{\eta}) \vee \kappa ) : \kappa \in \mathcal{P}( \{ 0 , 1 \}^{\Z} ) \} \). This proves \( \mathcal{P}^e([\underline{\varphi},\varphi]) \subseteq \{ N_{\ast}( (\nu_{\eta^{*}} \triangle \nu_{\eta}) \vee \kappa ) : \kappa \in \mathcal{P}( \{ 0 , 1 \}^{\Z} ) \} \). To complete the proof of the surjectivity of $\mathbf{D}_{B,C}$, we use Proposition~\ref{arsenin}.

\subsection{Proof of Theorem~\ref{glowne}}\label{profgl}

\subsubsection{\texorpdfstring{\eqref{zato2}}{(D\_A)} is a commutative dynamical diagram}\label{prep}

The proof that~\eqref{zato2} is a commutative dynamical diagram uses two ingredients. The first of them is that the following is a dynamical diagram:
\begin{equation*}
\begin{tikzpicture}[baseline=(current  bounding  box.center)]
\node (T) at (0,0) {$(H\times \{0,1\}^\Z,R\times \sigma)$};
\node (M) at (0,-1.5) {$ ((\{0,1\}^{\Z})^3,\sigma^{\times 3},\{(\nu_{\eta^*}\triangle\nu_\eta)\vee\kappa:\kappa\in\mathcal{P}(\{0,1\}^\Z)\})$};
\draw[->]  (T) edge node[auto] {$(\underline{\varphi}\otimes \varphi)\times id$} (M);
\end{tikzpicture}
\end{equation*}
which is a consequence of Lemma~\ref{prop:PhiMHGeneric}.

The second ingredient that we need to prove the commutativity of~\eqref{zato2} is the following equality (that holds everywhere):
\[
N\circ ((\underline{\varphi}\otimes \varphi)\times id)=M_H,
\]
which can be checked in a direct one-line calculation.

\subsubsection{\texorpdfstring{\eqref{zato2}}{(D\_A)} is surjective}\label{surje}

Recall that by~\eqref{a} and by the surjectivity of~\eqref{diagBC}, the diagram
\begin{equation*}
\begin{tikzpicture}[baseline=(current  bounding  box.center)]
\node (M) at (0,-1.5) {$ ((\{0,1\}^{\Z})^3,\sigma^{\times 3},\{(\nu_{\eta^*}\triangle\nu_\eta)\vee\kappa:\kappa\in\mathcal{P}(\{0,1\}^\Z)\})$};
\node (B) at (0,-3) {$(X_\eta,\sigma)$};
\draw[->]  (M) edge node[auto] {$N$} (B);
\end{tikzpicture}
\end{equation*}
is surjective. Thus (using also the commutativity of~\eqref{zato2}), in order to prove the surjectivity of~\eqref{zato2}, it suffices
to prove that
\begin{equation}\label{ten}
\begin{tikzpicture}[baseline=(current  bounding  box.center)]
\node (T) at (0,0) {$(H\times \{0,1\}^\Z,R\times \sigma)$};
\node (M) at (0,-1.5) {$ ((\{0,1\}^{\Z})^3,\sigma^{\times 3},\{(\nu_{\eta^*}\triangle\nu_\eta)\vee\kappa:\kappa\in\mathcal{P}(\{0,1\}^\Z)\})$};
\draw[->]  (T) edge node[auto] {$(\underline{\varphi}\otimes \varphi)\times id$} (M);
\end{tikzpicture}
\end{equation}
is surjective. However, by Lemma~\ref{prop:PhiMHGeneric} (cf.\ Remark~\ref{ttt}), $((\{0,1\}^\Z)^2,\sigma^{\times 2},\nu_{\eta^*}\triangle\nu_\eta)$ is a factor of $(H,R,m_H)$ via $\underline{\varphi}\otimes \varphi$, so given any joining $(\nu_{\eta^*}\triangle\nu_\eta)\vee\kappa$, it suffices to take its relatively independent extension to a joining of $m_H$ with $\kappa$ to conclude that~\eqref{ten} is surjective.

\section{Tautness and combinatorics}

\begin{proof}[Proof of Proposition~\ref{NNN}]
We first show that the conditions (a')-(f') are all equivalent. We then pass to proving that, in fact, they are also equivalent to each of (a)-(f).

Note that the implications (d') \( \implies \) (c') \( \implies \) (f') \( \implies \) (e') are immediate. Next we show (e') \( \implies \) (a'). It was shown in~\cite{MR3920387} that for taut sets, the corresponding Mirsky measure is of full support in the corresponding $\mathscr{B}$-free subshift. Applying this to $\mathscr{C}$, we conclude that each block that appears on $X_{\eta_\mathscr{C}}$ is of positive $\nu_{\eta_\mathscr{C}}$-measure. Thus it follows from (e') that $X_{\eta_\mathscr{C}}\subseteq X_{\eta'}$, and hence $\widetilde{X}_{\eta_\mathscr{C}} \subseteq \widetilde{X}_{\eta'}$. Using~\eqref{AA}, we obtain that $ \forall_{b'\in\mathscr{B}'} \exists_{c\in\mathscr{C}}$\ $c\divides b'$. Moreover $X_{\eta_\mathscr{C}}\subseteq X_\eta$ implies $X_{\eta^*}\subseteq X_{\eta_\mathscr{C}}$, since $X_{\eta^*}$ is the unique minimal subset of $X_\eta$. This yields $\widetilde{X}_{\eta^*}\subseteq \widetilde{X}_{\eta_\mathscr{C}}$. Using~\eqref{AA} again, we obtain that $\forall_{c\in\mathscr{C}}\exists_{b^*\in\mathscr{B}^*}\ b^*\divides c$, which proves (a'). Next we note that (a') \( \implies \) (b') by the very definition of $\eta',\eta_\mathscr{C}$ and $\eta^*$. To finish the first part, it only remains to notice that by Proposition~\ref{between_etas} and tautness of \( \mathscr{B}^{\prime} \) it follows that $X_{\eta^{\prime}}=\overline{ [\eta^*,\eta'] } $, which yields (b') \( \implies \) (d').

Since the proof of (b') \( \implies \) (d') was the only place where we used the tautness of \( \mathscr{B}^{\prime} \), the same arguments as above show also that (d) \( \implies \) (c) \( \implies \) (f) \( \implies \) (e) \( \implies \) (a) \( \implies \) (b). We now prove (b) \( \implies \) (b'). As (b') \( \implies \) (d') was already shown, and as (d') \( \implies \) (d) follows directly from \( X_{ \eta^{\prime} } \subseteq X_{\eta} \), this will finish the proof. Thus, suppose that $\eta_\mathscr{C}\leq \eta$. It follows then by~\eqref{AA} that $\nu_{\eta_\mathscr{C}}\in\mathcal{P}(\widetilde{X}_\eta)=\mathcal{P}(\widetilde{X}_{\eta'})$. Applying again~\eqref{AA}, we obtain $\eta_\mathscr{C}\leq \eta'$, and hence (b').
\end{proof}

\section{Entropy}\label{se:entropia}

\subsection{Entropy of \texorpdfstring{$X_\eta$}{X\_eta}: proof of Theorem~\ref{entropia} and of Corollary~\ref{zerowanie}}\label{se:em}

\begin{remark}\label{entropy_unique_erg}
If $X_\eta$ is uniquely ergodic then the Mirsky measure $\nu_\eta$ (whose entropy is zero) is the unique invariant measure and it follows immediately by the variational principle that $h(X_\eta)=0$.
\end{remark}

\begin{proof}[Proof of Theorem~\ref{entropia}]
To show the inequality \( h(X_{\eta}) \geq \overline{d}-\overline{d}^*\), we first assume that \( \sB \) is taut and consider the following block: $B=\eta[1,n]\in\{0,1\}^n$. Then $B(\ell)=0$ for any $\ell\in\cM_\sB\cap[1,n]$ and $B(\ell)=1$ for any $\ell\in\mathcal{F}_{\sB}\cap[1,n]$ (so, in particular, for any $\ell\in\mathcal{F}_{\sB^*}\cap[1,n]$). It follows by Proposition~\ref{between_etas} that any block $C\in\{0,1\}^n$ that agrees with $B$ on the positions belonging to $\mathcal{M}_\mathscr{B}\cup\mathcal{F}_{\mathscr{B}^*}$ also appears in $X_\eta$. There are 
\[
2^{n-|(\cM_\sB\cup\mathcal{F}_{\sB^*})\cap[1,n]|}=2^{|\mathcal{F}_\sB\cap[1,n]|-|\mathcal{F}_{\sB^*}\cap[1,n]|} \]
such blocks $C$ (they are pairwise distinct). Thus,
\[  2^{|\mathcal{F}_\sB\cap[1,n]|-|\mathcal{F}_{\sB^*}\cap[1,n]|}\leq p_n(\eta). \]
 It follows that
\begin{align}
\label{d-d*_below}
\begin{split}
\overline{d}-\overline{d}^*&=\limsup_{n\to\infty}\frac{|\mathcal{F}_\sB\cap[1,n]|}{n}-\limsup_{n\to\infty}\frac{|\mathcal{F}_{\sB^*}\cap[1,n]|}{n}\\
&\leq\limsup_{n\to\infty}\frac{|\mathcal{F}_\sB\cap[1,n]|-|\mathcal{F}_{\sB^*}\cap[1,n]|}{n}\leq \lim_{n\to\infty}\frac{\log_2 p_n(\eta)}{n}=h(X_\eta,\sigma).
\end{split}
\end{align}
For general (not necessarily taut) \( \sB \), we apply \eqref{d-d*_below} to the tautification \( \sB^{\prime} \). We use \( X_{\eta^{\prime}} \subseteq X_{\eta} \), \( (\sB^{\prime})^{*} = \sB^{*} \) and \( \overline{d}^{\prime} = \overline{d} \) (see~\eqref{j3}, \eqref{zale} and \eqref{jeszczeto}, respectively) to obtain
\[ h( X_{\eta} ) \geq h( X_{ \eta^{\prime} } ) \geq \overline{d}^{\prime} - (\overline{d}^{\prime})^{*} = \overline{d} - \overline{d}^{*} . \]

Now, assume additionally that $X_{\eta^*}$ is uniquely ergodic. Fix $K\geq1$ and let $n\in(\prod_{b\in\sB_K}b)\N$. Since $\eta^*\leq\eta\leq\eta_K$, it follows that
\[
p_n(\eta)\leq p_n(\overline{[\eta^*,\eta_K]}).
\]
For any block $B\in\{0,1\}^n$ which appears in $\overline{[\eta^*,\eta_K]}$, there exists $M\in\Z$ such that 
\[
\eta^*[M+1,M+n]\leq B\leq\eta_K[M+1,M+n].
\] 
Clearly, if $\eta^*( M+\ell )=1$ then $B(\ell)=1$ and there are \[|\supp\eta^*[M+1,M+n]|\] such ``mandatory'' $1$'s on $B$ coming from $\eta^*$. On the other hand, if $\eta_K(M+\ell)=0$ then also $B(\ell)=0$ and there are
\[
n-|\supp\eta_K[M+1,M+n]|
\]
such ``mandatory'' $0$'s on $B$ coming from $\eta_K$. All the other positions on $B$ can be altered arbitrarily, without loosing the property that $B$ appears in $\overline{[\eta^*,\eta_K]}$. The number of such ``free'' positions equals
\[ |\supp\eta_K[M+1,M+n]|-|\supp\eta^*[M+1,M+n]|. \]
Each choice of $0$'s and $1$'s on the ``free'' positions yields a different block of length $n$ from $\overline{[\eta^*,\eta_K]}$. Thus, for each choice of $M$ we obtain
\[
2^{|\supp\eta_K[M+1,M+n]|-|\supp\eta^*[M+1,M+n]|}
\]
blocks and it follows that
\begin{equation}\label{gamma_K}
p_n(\eta)\leq p_n(\overline{[\eta^*,\eta_K]})\leq p_n(\eta^*)\cdot p_n(\eta_K)\cdot2^{\sup_{M\in\Z}(|\supp\eta_K[M+1,M+n]|-|\supp\eta^*[M+1,M+n]|)}.
\end{equation} 
Since $\eta_K$ is $\lcm(\sB_K)$-periodic and $\lcm(\sB_K)\mid n$, we have
\begin{equation}\label{periodic}
|\supp\eta_K[M+1,M+n]|=nd(\mathcal{F}_{\sB_K}).
\end{equation} 
By the uniform ergodicity of $X_{\eta^*}$, for any $\vep>0$ and for large enough $n$, we have
\begin{equation}\label{uniform}
n(d^*-\vep)\leq | \supp\eta^*[M+1,M+n] | \leq n(d^*+\vep)
\end{equation}
for every $M\in\Z$. Using \eqref{gamma_K}, \eqref{periodic} and \eqref{uniform}, we conclude that
\[
p_n(\eta)\leq p_n(\eta^*) \cdot p_n (\eta_K) \cdot 2^{nd(\mathcal{F}_{\sB_K})-n(d^*-\vep)}.
\]
Hence 
\[ h(X_\eta,\sigma)\leq h(X_{\eta^*},\sigma)+h(X_{\eta_K},\sigma)+d(\mathcal{F}_{\sB_K})-d^*.\]
By Remark \ref{entropy_unique_erg}, we have $h(X_{\eta^*},\sigma)=0$ and $h(X_{\eta_K},\sigma)=0$ since $\eta_K$ is periodic. Recall also that by the Davenport-Erd\H{o}s theorem (i.e.~\eqref{DE}), $\lim_{K\to\infty}d(\mathcal{F}_{\sB_K})=\overline{d}$. This yields
\[
h(X_\eta)\leq \overline{d}-d^*,
\]
which completes the proof of Theorem~\ref{entropia}.
\end{proof}


For the proof of Corollary~\ref{zerowanie}, we will need the following lemma.
\begin{lemma}\label{lm4}
For any $\mathscr{B}\subseteq\N$ such that $\eta'\neq \eta^*$, we have $\overline{d}>\overline{d}^*$.
\end{lemma}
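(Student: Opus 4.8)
The plan is to reduce to the taut case and then argue by contradiction, exploiting the fact that distinct taut sets have distinct Mirsky measures. Put $\mathscr{C}:=\mathscr{B}'$. By~\eqref{zale} we have $\mathscr{C}^*=(\mathscr{B}')^*=\mathscr{B}^*$, and both $\mathscr{C}$ and $\mathscr{C}^*$ are taut; moreover $\eta_\mathscr{C}=\eta'$ and $\eta_{\mathscr{C}^*}=\eta^*$, so the hypothesis $\eta'\neq\eta^*$ says precisely that $\mathscr{C}\neq\mathscr{C}^*$ (equivalently $\mathcal{M}_\mathscr{C}\subsetneq\mathcal{M}_{\mathscr{C}^*}$, the inclusion being clear from the definition of $\mathscr{C}^*$). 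Since $\overline{d}=\overline{d}(\mathcal{F}_{\mathscr{B}'})$ (as used in the proof of Theorem~\ref{entropia}, cf.~\eqref{jeszczeto}) and $\overline{d}^*=\overline{d}(\mathcal{F}_{\mathscr{B}^*})$, and since the Davenport--Erd\H{o}s theorem~\eqref{DE} gives $\overline{d}(\mathcal{F}_\mathscr{C})=1-\underline{d}(\mathcal{M}_\mathscr{C})=1-\boldsymbol{\delta}(\mathcal{M}_\mathscr{C})$ (and likewise for $\mathscr{C}^*$), the claim $\overline{d}>\overline{d}^*$ is equivalent to the strict inequality $\boldsymbol{\delta}(\mathcal{M}_\mathscr{C})<\boldsymbol{\delta}(\mathcal{M}_{\mathscr{C}^*})$. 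As $\mathcal{M}_\mathscr{C}\subseteq\mathcal{M}_{\mathscr{C}^*}$ yields ``$\leq$'' for free, it remains to exclude equality.

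So suppose towards a contradiction that $\boldsymbol{\delta}(\mathcal{M}_\mathscr{C})=\boldsymbol{\delta}(\mathcal{M}_{\mathscr{C}^*})=:\delta$. The key maneuver is to produce one sequence along which both $\eta_\mathscr{C}$ and $\eta_{\mathscr{C}^*}$ realize their Mirsky measures while differing only on a negligible set. I would choose $(\ell_i)$ realizing the lower density of the \emph{larger} set $\mathcal{M}_{\mathscr{C}^*}$, so that $\tfrac{1}{\ell_i}|\mathcal{M}_{\mathscr{C}^*}\cap[1,\ell_i]|\to\underline{d}(\mathcal{M}_{\mathscr{C}^*})=\delta$. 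Then $\underline{d}(\mathcal{M}_\mathscr{C})\le\liminf_i\tfrac{1}{\ell_i}|\mathcal{M}_\mathscr{C}\cap[1,\ell_i]|\le\limsup_i\tfrac{1}{\ell_i}|\mathcal{M}_\mathscr{C}\cap[1,\ell_i]|\le\delta$ by nestedness, and since $\underline{d}(\mathcal{M}_\mathscr{C})=\boldsymbol{\delta}(\mathcal{M}_\mathscr{C})=\delta$ by~\eqref{DE}, the sequence $(\ell_i)$ also realizes the lower density of $\mathcal{M}_\mathscr{C}$; consequently the difference $\mathcal{M}_{\mathscr{C}^*}\setminus\mathcal{M}_\mathscr{C}$ has density $0$ along $(\ell_i)$. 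Because $\eta_\mathscr{C}$ and $\eta_{\mathscr{C}^*}$ disagree exactly on this difference set, and each is quasi-generic along $(\ell_i)$ for its Mirsky measure (any lower-density-realizing sequence works), Proposition~\ref{pr12} (applied with the constant family $x_K:=\eta_{\mathscr{C}^*}$ and $x:=\eta_\mathscr{C}$) yields $\nu_{\eta_\mathscr{C}}=\nu_{\eta_{\mathscr{C}^*}}$.

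This is the desired contradiction: two distinct primitive taut sets cannot share a Mirsky measure. Concretely, feeding $\nu_{\eta_\mathscr{C}}=\nu_{\eta_{\mathscr{C}^*}}$ into the equivalence ``$\nu_{\eta_\mathscr{C}}\in\mathcal{P}(\widetilde{X}_{\eta_{\mathscr{C}^*}})\iff\forall_{b\in\mathscr{C}^*}\exists_{c\in\mathscr{C}}\ c\mid b$'' of~\eqref{AA} (with ambient set $\mathscr{C}^*$), together with the symmetric instance (ambient set $\mathscr{C}$), gives mutual divisibility that collapses $\mathscr{C}$ and $\mathscr{C}^*$ to the same primitive set, contrary to $\mathscr{C}\neq\mathscr{C}^*$. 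Hence equality is impossible, so $\boldsymbol{\delta}(\mathcal{M}_\mathscr{C})<\boldsymbol{\delta}(\mathcal{M}_{\mathscr{C}^*})$ and therefore $\overline{d}>\overline{d}^*$. The step I expect to be the main obstacle is the ``squeeze'' of the second paragraph: one must check carefully that a single sequence realizing the lower density of $\mathcal{M}_{\mathscr{C}^*}$ \emph{simultaneously} realizes it for $\mathcal{M}_\mathscr{C}$ and annihilates the difference set—this is exactly where the Davenport--Erd\H{o}s identity $\underline{d}=\boldsymbol{\delta}$ and the nestedness $\mathcal{M}_\mathscr{C}\subseteq\mathcal{M}_{\mathscr{C}^*}$ must be combined—after which the comparison of Mirsky measures via Proposition~\ref{pr12} and the tautness rigidity of~\eqref{AA} run smoothly.
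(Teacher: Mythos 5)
Your proof is correct and follows essentially the same route as the paper's: reduce to the taut case, pick a sequence realizing the lower density of $\mathcal{M}_{\mathscr{B}^*}$, use the nesting $\mathcal{M}_{\mathscr{B}'}\subseteq\mathcal{M}_{\mathscr{B}^*}$ to show that under the assumed equality the two Mirsky measures coincide, and then invoke the tautness rigidity of~\eqref{AA} (Theorem~L of~\cite{MR3803141}) to reach a contradiction. The only cosmetic differences are that you deduce $\nu_{\eta'}=\nu_{\eta^*}$ via Proposition~\ref{pr12} where the paper invokes the characterization of the Mirsky measure as the unique measure of maximal density, and you cite~\eqref{AA} where the paper cites Corollary~9.2 of the same reference.
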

\begin{proof}
Since $\overline{d}=\overline{d}':=\overline{d}(\mathcal{F}_{\mathscr{B}'})$, we can assume without loss of generality that $\mathscr{B}$ is taut. Let $(\ell_i)$ be a sequence realizing the lower density of $\mathcal{M}_{\mathscr{B}^*}$. It follows by the result of Davenport and Erd\H{o}s (i.e.\ by~\eqref{DE}) that
\begin{multline*}
\overline{d}^*=\overline{d}(\mathcal{F}_{\mathscr{B}^*})=\lim_{i\to \infty}\frac{1}{\ell_i}|\mathcal{F}_{\mathscr{B}^*}\cap [1,\ell_i]|=\liminf_{i\to \infty}\frac{1}{\ell_i}|\mathcal{F}_{\mathscr{B}^*}\cap [1,\ell_i]| \\
\leq\liminf_{i\to\infty}\frac{1}{\ell_i}|\mathcal{F}_\mathscr{B}\cap [1,\ell_i]|\leq
\limsup_{i\to\infty}\frac{1}{\ell_i}|\mathcal{F}_\mathscr{B}\cap [1,\ell_i]|\leq \overline{d}(\mathcal{F}_\mathscr{B})=\overline{d}.
\end{multline*}
If $\overline{d}=\overline{d}^*$ then all inequalities in the above formula become equalities. In particular, 
\[
\lim_{i\to\infty}\frac{1}{\ell_i}|\mathcal{F}_\mathscr{B}\cap [1,\ell_i]| \text{ exists and equals }\overline{d}=\overline{d}^*,
\]
so that $\eta$ is generic along $(\ell_i)$ for $\nu_\eta$.
Since $\eta^*\leq \eta$ (by the construction of $\mathscr{B}^*$), it follows that $\frac{1}{\ell_i}|\{n\in[1,\ell_i]:\eta( n ) \neq \eta^{*}( n ) \}| \to 0$. Thus, since $\eta^*$ is generic along $(\ell_i)$ for $\nu_{\eta^*}$, it follows immediately that $\eta$ has to be generic along $(\ell_i)$ for the very same measure, i.e.\ $\nu_{\eta^*}$. However, we know that the Mirsky measure $\nu_\eta$ is the unique measure of maximal density, i.e. the invariant measure of the greatest value for the cylinder $\{x\in X_\eta : x(0)=1\}$, in each $\mathscr{B}$-free subshift (see, e.g., Theorem~4 and Corollary~4 in~\cite{MR3784254}, cf. also Chapter~7 in~\cite{MR3136260}), which gives us $\nu_\eta=\nu_{\eta^*}$. Now it suffices to use Corollary~9.2 from~\cite{MR3803141} which says (in particular) that the latter condition is equivalent to $\eta=\eta^*$ (cf.~\eqref{AA}).
\end{proof}

\begin{proof}[Proof of Corollary~\ref{zerowanie}]
By Remark~\ref{entropy_unique_erg}, if $X_\eta$ is uniquely ergodic then $h(X_\eta)=0$.

Suppose that $h(X_\eta)=0$. Then, by Theorem~\ref{entropia}, we have $\overline{d}=d^*$ which implies $\eta'=\eta^*$ by Lemma~\ref{lm4}. The latter condition is equivalent to $\mathscr{B}'=\mathscr{B}^*$ by Theorem~L in~\cite{MR3803141}, as both $\mathscr{B}'$ and $\mathscr{B}^*$ are taut (cf.~\eqref{AA}). It follows immediately that $X_{\eta'}$ must be uniquely ergodic as it is equal to $X_{\eta^*}$ and the latter subshift is uniquely ergodic since $\eta^*$ is assumed to be a regular Toeplitz sequence. It suffices to use Theorem~\ref{wnioC} to complete the proof.
\end{proof}

\subsection{Intrinsic ergodicity of \texorpdfstring{$X_\eta$}{X\_eta}: proof of Theorem~\ref{thme}}\label{wewna}
Consider first the case when $\eta'=\eta^*$. It follows by Theorem~\ref{wnioC} that
\[
\mathcal{P}(X_\eta)=\mathcal{P}(X_{\eta'})=\mathcal{P}(X_{\eta^*}).
\]
Thus, if $X_{\eta^*}$ is uniquely ergodic then $X_\eta$ is also uniquely ergodic. Moreover, the pair $(\eta^*,\eta)$ is quasi-generic for
\(
\nu_{\eta^*}\triangle\nu_\eta
\)
along $(\ell_i)$ realizing the lower density of $\mathcal{M}_\mathscr{B}$. It follows by Remark~\ref{entropy_unique_erg} that
\[
\{\nu_\eta\}=\mathcal{P}(X_\eta)=\mathcal{P}(X_{\eta^*})=\{\nu_{\eta^*}\}.
\]
Thus, $\nu_{\eta^*}\triangle\nu_\eta$ is the diagonal joining of two copies of $\nu_\eta$. Let $(x,x,y)$ be a generic point for $(\nu_{\eta^*}\triangle \nu_\eta)\otimes B_{\nicefrac12,\nicefrac12}$. Then $N(x,x,y)=x$ is a generic point for 
\(
N_\ast((\nu_{\eta^*}\triangle \nu_\eta)\otimes B_{\nicefrac12,\nicefrac12}).
\)
It follows immediately that
\( 
N_\ast((\nu_{\eta^*}\triangle \nu_\eta)\otimes B_{\nicefrac12,\nicefrac12})=\nu_\eta \).

Assume now that $\eta'\neq \eta^*$. We will study the following diagram:
\begin{equation}\label{cov}
\begin{tikzpicture}[baseline=(current  bounding  box.center)]
\node (A1) at (0,0) {$(H\times \{0,1\}^\Z,R\times \sigma)$};
\node (B1) at (0,-2) {$(H\times \{0,1\}^\Z, \widetilde{R})$};
\node (C1) at (0,-4) {$([\underline{\varphi},\varphi],\sigma)$};
\draw[->]
	(A1) edge node[auto] {$\Psi$} (B1)
	(B1) edge node[auto] {$\Phi$} (C1);
\draw (A1) -- (3,0) -- node[auto] {$M_H$} (3,-4);
\draw[->] (3,-4) -- (C1);
\end{tikzpicture}
\end{equation}
Let us now introduce all maps appearing in this diagram. We define $\widetilde{R}\colon H\times \{0,1\}^\Z \to H\times \{0,1\}^\Z$ by
\[
\widetilde{R}(h,x)=\begin{cases}
(Rh,x) & \text{ if } \underline{\varphi}(h)(0)=\varphi(h)(0),\\
(Rh,\sigma x) & \text{ if } 0=\underline{\varphi}(h)(0)<\varphi(h)(0)=1.
\end{cases}
\]
Let
\[
Z_\infty:=\{z\in\{0,1\}^\Z : |\text{supp }z \cap (-\infty,0]|=|\text{supp }z \cap [0,\infty)|=\infty\}.
\]
Given $x\in \{0,1\}^\Z$ and $z\in Z_\infty$, let $\hat{x}_z$ be the sequence obtained by reading consecutive coordinates of $x$ which are in the support of $z$ and such that
\[
\hat{x}_z(0)=x(\min\{k\geq 0 : z(k)=1\}).
\]
Now, let
\[
H_\infty:=\{h\in H : R^nh\in W\setminus \underline{W} \text{ infinitely often both in the future and in the past}\}
\]
and define $\Psi\colon H_\infty\times\{0,1\}^\Z\to H_\infty\times \{0,1\}^\Z$ by
\[
\Psi(h,x)=(h,\widehat{x}_{\varphi(h)-\underline{\varphi}(h)})
\]
(notice that $\varphi(h)(n)-\underline{\varphi}(h)(n)=1 \iff R^nh\in W\setminus \underline{W}$, so for $h\in H_\infty$, we have $\varphi(h)-\underline{\varphi}(h)\in Z_\infty$). It remains to define $\Phi$. Let $\Phi\colon H_\infty\times \{0,1\}^\Z\to[\underline{\varphi},\varphi]$ be defined by mapping $ (h , x) $ to the unique element in $[\underline{\varphi},\varphi]$ such that 
\[
\underline{\varphi}(h) \leq \Phi(h,x)\leq \varphi(h) \text{ and} \mkern80mu \widehat{\phantom{x}} \mkern-80mu {({\Phi( h ,x)})}_{\varphi( h )-\underline{\varphi}(h)}=x.
\]
We will show that diagram~\eqref{cov} commutes (it will then follow by Theorem~\ref{glowne} that 
the maps $M_H$ and $\Phi$ are surjective morphisms).

\begin{lemma}\label{lm5}
For any $\mathscr{B}\subseteq\N$, we have $m_H(W\setminus \underline{W})=\overline{d}-\overline{d}^*$. Moreover, if $\eta'\neq \eta^*$, we have $m_H(W\setminus \underline{W})>0$.
\end{lemma}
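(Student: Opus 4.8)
The plan is to compute the two Haar measures $m_H(W)$ and $m_H(\underline{W})$ separately and then subtract. The subtraction is legitimate because $\underline{W}\subseteq W$: indeed, evaluating the coordinatewise inequality $\underline{\varphi}(h)\leq\varphi(h)$ from \eqref{order} at the coordinate $n=0$ gives $\mathbf{1}_{\underline{W}}(h)\leq\mathbf{1}_{W}(h)$ for every $h\in H$, i.e. $\underline{W}\subseteq W$. Hence $m_H(W\setminus\underline{W})=m_H(W)-m_H(\underline{W})$, and it remains to identify the two summands with $\overline{d}$ and $\overline{d}^*$.

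For $m_H(W)$, I would first note that $W=\bigcap_{K\geq1}W_K$, where $W_K=\{h\in H:h_b\neq0\text{ for all }b\in\mathscr{B}_K\}$ is the clopen set introduced in the proof of Lemma~\ref{prop:PhiMHGeneric}, and that $W_K\searrow W$. Since $W_K$ depends only on the coordinates indexed by $\mathscr{B}_K$, and the projection of $H=\overline{\Delta(\Z)}$ onto $\prod_{b\in\mathscr{B}_K}\Z/b\Z$ is the diagonally embedded cyclic group of order $\lcm(\mathscr{B}_K)$ (carrying the uniform measure as the image of $m_H$), one obtains $m_H(W_K)=d(\mathcal{F}_{\mathscr{B}_K})=1-d(\mathcal{M}_{\mathscr{B}_K})$. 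Continuity of $m_H$ from above together with the Davenport--Erd\H{o}s theorem \eqref{DE} then yields
\[
m_H(W)=\lim_{K\to\infty}d(\mathcal{F}_{\mathscr{B}_K})=1-\lim_{K\to\infty}d(\mathcal{M}_{\mathscr{B}_K})=1-\boldsymbol{\delta}(\mathcal{M}_\mathscr{B})=1-\underline{d}(\mathcal{M}_\mathscr{B})=\overline{d}(\mathcal{F}_\mathscr{B})=\overline{d}.
\]

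For $m_H(\underline{W})$, the idea is to transport the computation to $H^*$ through the homomorphism $\Gamma_{H,H^*}$. Being a continuous surjective homomorphism of compact abelian groups, $\Gamma_{H,H^*}$ pushes Haar measure to Haar measure, that is $(\Gamma_{H,H^*})_\ast m_H=m_{H^*}$. Moreover, \eqref{mapsto} supplies simultaneously $\Gamma_{H,H^*}(\underline{W})=W^*$ and $\Gamma_{H,H^*}(H\setminus\underline{W})=H^*\setminus W^*$, which together force $\Gamma_{H,H^*}^{-1}(W^*)=\underline{W}$. Hence
\[
m_H(\underline{W})=m_H\big(\Gamma_{H,H^*}^{-1}(W^*)\big)=m_{H^*}(W^*)=\overline{d}(\mathcal{F}_{\mathscr{B}^*})=\overline{d}^*,
\]
where the third equality is the preceding paragraph applied verbatim to $\mathscr{B}^*$ (with $W^*_K$ in place of $W_K$) in place of $\mathscr{B}$. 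Combining the two computations gives $m_H(W\setminus\underline{W})=\overline{d}-\overline{d}^*$.

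Finally, the ``moreover'' part is immediate: if $\eta'\neq\eta^*$, then Lemma~\ref{lm4} gives $\overline{d}>\overline{d}^*$, so $m_H(W\setminus\underline{W})=\overline{d}-\overline{d}^*>0$. The one place requiring care --- and the only genuine obstacle --- is the identification $m_H(\underline{W})=\overline{d}^*$: since $\underline{W}=\overline{\operatorname{int}(W)}$ is defined abstractly as the closure of an interior rather than as a countable intersection of clopen sets, it cannot be approximated directly the way $W$ was, and the reduction to the already-understood window $W^*$ through \eqref{mapsto} is exactly what makes the argument go through.
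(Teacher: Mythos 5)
Your proof is correct and follows essentially the same route as the paper: write $m_H(W\setminus \underline{W})=m_H(W)-m_H(\underline{W})$, transport $\underline{W}$ to $W^*$ via $\Gamma_{H,H^*}$ and \eqref{mapsto}, identify $m_H(W)=\overline{d}$ and $m_{H^*}(W^*)=\overline{d}^*$, and invoke Lemma~\ref{lm4} for positivity. The only differences are cosmetic: you reprove $m_H(W)=\overline{d}$ from scratch via the clopen approximation $W_K\searrow W$ and Davenport--Erd\H{o}s, where the paper simply cites \eqref{miaraW}, and you justify $(\Gamma_{H,H^*})_\ast m_H=m_{H^*}$ by the general fact that surjective continuous homomorphisms of compact groups push Haar to Haar, where the paper appeals to unique ergodicity of $R^*$ --- both are valid.
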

Before we begin the proof of this lemma, recall some results from~\cite{MR4280951} that we already mentioned in the introduction: there is a continuous surjective group homomorphism $\Gamma_{H,H^{*}} \colon H \to H^{*} $, which maps \( \Delta(n) \) to \( \Delta^{*}(n) \). In addition it has the following property, see equation~\eqref{mapsto}:
\begin{equation*}
\Gamma_{H,H^{*}}(\underline{W})=W^* \text{ and }\Gamma_{H,H^{*}}(H\setminus \underline{W})=H^*\setminus W^*.
\end{equation*}
Recall also that it was shown in Lemma~4.1 in~\cite{MR3947636} that 
\begin{equation}\label{miaraW}
m_H(W)=\overline{d}(\mathcal{F}_\mathscr{B})=\overline{d}.
\end{equation}
\begin{proof}[Proof of Lemma~\ref{lm5}]
We have
\begin{align*}
m_H(W\setminus \underline{W})&=m_H(W)-m_H(\underline{W})\\
&=m_H(W)-m_H(\Gamma_{H,H^{*}}^{-1}(W^*))\\
&=m_H(W)-(\Gamma_{H,H^{*}})_{\ast}(m_H)(W^*)\\
&=m_H(W)-m_{H^*}(W^*)
\end{align*}
(the second equality follows from~\eqref{mapsto} and the fourth equality follows by the unique ergodicity of~$R^*$). It remains to use~\eqref{miaraW} to deduce that $m_H(W\setminus \underline{W})=\overline{d}-\overline{d}^*$ and Lemma~\ref{lm4} to conclude that $m_H(W\setminus \underline{W})>0$ whenever $\eta'\neq\eta^*$.
\end{proof}
It follows now from Lemma~\ref{lm5} and from the ergodicity of $(H,R,m_H)$ that $m_H(H_\infty)=1$. Thus, in order to conclude that~\eqref{cov} commutes, it remains to check whether for every $h\in H_\infty$ and every $x\in\{0,1\}^\Z$ we have the commutativity relations
\begin{align}
\label{uuu}
\begin{split}M_H(h,x) &= (\Phi \circ \Psi)(h,x) , \\
(\widetilde{R}\circ\Psi)(h,x) &= (\Psi\circ(R\times \sigma))(h,x) ,\\
(\sigma\circ\Phi)(h,x) &= (\Phi\circ \widetilde{R})(h,x).
\end{split}
\end{align}
The first equality in~\eqref{uuu} is immediate by the definition of the maps, while~the second and third follow from
\begin{equation}\label{haty}
\widehat{\sigma x}_{\sigma z}=\begin{cases}
\hat{x}_z,& \text{ if }z(0)=0,\\
\sigma\hat{x}_z,&\text{ if }z(0)=1,
\end{cases}
\end{equation}
(the proof of~\eqref{haty} consists of a straightforward but lengthy calculation; an analogous property is proved in~\cite{MR3356811}).

Notice that it follows by Theorem~\ref{glowne} that the morphism $M_H$ is surjective. Thus 
the morphism $\Phi$ is also surjective.

Now, we are ready to complete the proof of the intrinsic ergodicity of $X_\eta$. The main ideas come from~\cite{MR3356811}. We will present the sketch of the proof only (similarly as in \cite{MR3803141} for $\widetilde{X}_\eta$). Clearly, any point from $H_\infty \cap (W\setminus \underline{W})$ returns to $W\setminus\underline{W}$ infinitely often under $R$ and $m_H(H_\infty \cap (W\setminus \underline{W}))=m_H(W\setminus\underline{W})$. Recall that
\[
\widetilde{R}(h,x)=\begin{cases}
(Rh,x),& \text{ if } \underline{\varphi}(h)(0)=\varphi(h)(0),\\
(Rh,\sigma x),& \text{ if } 0=\underline{\varphi}(h)(0)<\varphi(h)(0)=1.
\end{cases}
\]
Then every point from $(H_\infty \cap (W\setminus \underline{W}))\times \{0,1\}^\Z$ returns to $(W\setminus\underline{W})\times \{0,1\}^\Z$ infinitely often under $\widetilde{R}$ and
$\nu((H_\infty \cap (W\setminus \underline{W}))\times\{0,1\}^\Z)=\nu((W\setminus\underline{W})\times \{0,1\}^\Z)$ for every $\nu \in \mathcal{P}(H\times \{0,1\}^\Z,\widetilde{R})$. Thus, the induced transformation $\widetilde{R}_{{(W\setminus \underline{W})}\times \{0,1\}^\Z}$ is well-defined, i.e. $\widetilde{R}_{{(W\setminus \underline{W})}\times \{0,1\}^\Z}(h,x)=\widetilde{R}^{n(h,x)}(h,x)$ for $\nu$-a.e. $(h,x)\in (W\setminus \underline{W})\times\{0,1\}^\Z$, where $n(h,x):=\min\{n\geq1: \ \widetilde{R}^n(h,x)\in (W\setminus \underline{W})\times\{0,1\}^\Z\}$. 
It follows that $\widetilde{R}_{(W\setminus \underline{W})\times \{0,1\}^\Z}=R_{(W\setminus \underline{W})}\times \sigma$ a.e.\ for any $\widetilde{R}$-invariant measure.

We will show now that $\widetilde{R}$ has a unique measure of maximal (measure-theoretic) entropy. Since $m_H(W\setminus\underline{W})>0$ whenever $\eta'\neq\eta^*$ (see Lemma~\ref{lm5}) and since $\kappa(W\setminus\underline{W}\times\{0,1\}^\Z)=m_H(W\setminus\underline{W})>0$ for any $R_{W\setminus\underline{W}}\times\sigma$-invariant probability measure $\kappa$, in view of Abramov's formula it suffices to show that $\widetilde{R}_{(W\setminus \underline{W})\times \{0,1\}^\Z}=R_{(W\setminus \underline{W})}\times \sigma$ has a unique measure of maximal entropy. For any $R_{(W\setminus \underline{W})}\times\sigma$-invariant measure $\kappa$, by the Pinsker formula, we have
\begin{align*}
\begin{split}
 &h(\{0,1\}^\Z,\sigma,(\pi_2)_\ast(\kappa))\leq h( (W\setminus \underline{W})\times \{0,1\}^\Z,R_{W\setminus \underline{W}}\times \sigma,\kappa )\\
&\quad\leq h(W\setminus \underline{W}, R_{W\setminus \underline{W}},(\pi_1)_\ast(\kappa)|_{(W\setminus \underline{W})}) +h(\{0,1\}^\Z,\sigma,(\pi_2)_\ast(\kappa))=h(\{0,1\}^\Z,\sigma,(\pi_2)_\ast(\kappa)),
\end{split}
\end{align*}
where $h(W\setminus \underline{W}, R_{W\setminus \underline{W}},(\pi_1)_\ast(\kappa)|_{(W\setminus \underline{W})})$ vanishes by Abramov's formula as $R_{W\setminus\underline{W}}$ is an induced map coming from a rotation.
Since $(\pi_2)_\ast(\kappa)$ can be arbitrary, it follows that a measure $\kappa$ has the maximal entropy among all $R_{W\setminus \underline{W}}\times\sigma$-invariant measures if and only if $h((W\setminus \underline{W})\times\{0,1\}^{\Z}, R_{W\setminus \underline{W}}\times\sigma,\kappa)=h(\{0,1\}^\Z,\sigma)$. Moreover, $\kappa$ is a measure of maximal entropy for $R_{W\setminus \underline{W}}\times\sigma$ if and only if $(\pi_2)_\ast(\kappa)$ is the measure of maximal entropy for $\sigma$, that is, when $(\pi_2)_\ast(\kappa)$ is the Bernoulli measure $B_{\nicefrac12,\nicefrac12}$, that is, when $\kappa$ is a joining of the unique invariant measure for $R_{W\setminus \underline{W}}$ and $B_{\nicefrac12,\nicefrac12}$. Since the unique invariant measure for $R_{W\setminus \underline{W}}$ is of zero entropy, it follows from the disjointness of K-automorphisms with zero entropy automorphisms~\cite{MR0213508} that $\kappa$ is the product measure. In particular, $\kappa$ is unique.

The last step to conclude the intrinsic ergodicity of $X_\eta$ is to show that 
\begin{equation}\label{eq:jan}
h(H\times \{0,1\}^\Z, \widetilde{R})=m_H(W\setminus\overline{W})=\overline{d}-d^*=h(X_\eta).
\end{equation}
Let us justify each of the equalities above. By the variational principle, by the Abramov's formula and by the Pinsker formula, we have
\begin{align*}
h(H\times \{0,1\}^\Z, \widetilde{R})&=\sup_\rho\{h(H\times \{0,1\}^\Z, \widetilde{R},\rho) \}\\
&=m_H(W\setminus\underline{W})\cdot\sup_\rho\{h((W\setminus \underline{W})\times \{0,1\}^\Z,R_{W\setminus\underline{W}}\times\sigma,\rho)\}\\
&=m_H(W\setminus \underline{W})\cdot \sup_\rho \{h(\{0,1\}^\Z,\rho)\}=m_H(W\setminus\underline{W}),
\end{align*}
where the suprema are taken over all Borel probability invariant measures for the corresponding maps. This yields the first equality in~\eqref{eq:jan}. Moreover, the middle equality in~\eqref{eq:jan} follows by Lemma~\ref{lm5}, while the last one follows by Theorem~\ref{entropia}.

\subsection{Entropy density of \texorpdfstring{$X_\eta$}{X\_eta}: proof of Theorem~\ref{dominik}}

The idea of the proof of Theorem~\ref{dominik} is the same as that of the analogous result for $\widetilde{X}_\eta$ in~\cite{MR4544150}. Let us introduce the necessary tools and notation. Given $x,y\in\{0,1\}^\Z$, consider the following \emph{premetric}:
\[
\underline{d}(x,y):=\liminf_{n\to\infty}\frac{1}{n}|\{1\leq i\leq n : x( i ) \neq y( i ) \}|
\]
(being a premetric means that $\underline{d}$ is a real-valued, nonnegative, symmetric function on $(\{0,1\}^\Z)^2$ vanishing on the diagonal; the triangle inequality for $\underline{d}$ fails). As a premetric, $\underline{d}$ induces a Hausdorff pseudometric $\underline{d}^H$ on the space of all nonempty subsets of $\{0,1\}^\Z$ in the following way: 
\[
\underline{d}(x,Y):=\inf_{y\in Y}\underline{d}(x,y) \text{ and }\underline{d}^H(X,Y):=\max\{\sup_{x\in X}\underline{d}(x,Y),\sup_{y\in Y}\underline{d}(y,X)\}
\]
for any $\emptyset\neq X, Y\subseteq \{0,1\}^\Z$ and $x\in X,y\in Y$.

Let us now recall some results from~\cite{MR4544150} (we formulate them for $0$-$1$ shifts, however, they are valid for shifts over any finite alphabet).

\begin{proposition}[Proposition 26 in~\cite{MR4544150}]\label{sofic}
Let $x\in\{0,1\}^\Z$ be a periodic point under $\sigma$. Then the hereditary closure of the orbit of $x$ is a transitive sofic shift.
\end{proposition}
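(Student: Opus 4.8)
The plan is to exhibit an explicit finite labeled-graph presentation of the hereditary closure, which simultaneously establishes soficity and, through strong connectedness of the graph, transitivity. Write $p$ for the period of $x$, so $x(n+p)=x(n)$ for all $n$, and note that the orbit $\mathcal{O}(x)=\{\sigma^k x:k\in\Z\}$ is already finite, hence closed. By the description of the hereditary closure as $M(\mathcal{O}(x)\times\{0,1\}^\Z)$, we have
\[
\widetilde{\mathcal{O}(x)}=\{y\in\{0,1\}^\Z:\exists\,k\in\Z,\ y\leq \sigma^k x\}
=\{y:\supp y\subseteq \supp x-k\text{ for some }k\in\{0,\dots,p-1\}\}.
\]
First I would record that this set is genuinely a subshift: it is visibly $\sigma$-invariant, and it is closed because if $y_m\to y$ with $y_m\leq \sigma^{k_m}x$, then after passing to a subsequence the finitely many phases $k_m\bmod p$ are constant, and coordinatewise convergence yields $y\leq\sigma^{k}x$.

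Next I would build the presenting graph $G$ whose vertex set is $\Z/p\Z$, where a vertex $r$ encodes the residue of the current position under the chosen shift. From each vertex $r$ there is an edge to $r+1$ labeled $0$ (always present, since $0\leq a$ for any $a$), and an edge to $r+1$ labeled $1$ precisely when $x(r)=1$. The key verification is that the bi-infinite label sequences of walks in $G$ are exactly the elements of $\widetilde{\mathcal{O}(x)}$: any bi-infinite walk must traverse the residues $r(n)=r_0+n$, and its label $y$ then satisfies $y(n)=1\Rightarrow x(r_0+n)=1$, i.e.\ $y\leq\sigma^{r_0}x$; conversely every such $y$ labels the walk with $r(n)=r_0+n$. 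Since the $1$-block labeling of an edge shift is a sliding-block code, hence a factor map onto its image, this presentation shows $\widetilde{\mathcal{O}(x)}$ is sofic.

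For transitivity I would exploit that the $0$-labeled edges alone form a single $p$-cycle through all vertices, so $G$ is strongly connected and essential (every vertex lies on a bi-infinite path); consequently the language of $\widetilde{\mathcal{O}(x)}$ equals the set of finite path-labels. Given two admissible words $u,v$—say $u$ labels a path ending at a vertex $t$ and $v$ a path starting at a vertex $s'$—one connects them along the cycle by the all-zero block $w=0^{(s'-t)\bmod p}$, so that $uwv$ is admissible. This is exactly the irreducibility criterion equivalent to topological transitivity for subshifts.

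I expect no serious obstacle: the only points requiring genuine care are the phase-residue bookkeeping in matching bi-infinite walks to elements of the hereditary closure, and the degenerate cases $x=\mathbf{0}$ (yielding the one-point shift) and $x=\mathbf{1}$ (yielding the full shift), both of which the construction handles automatically, since $p=1$ gives a single vertex with either only the $0$-edge or both edges.
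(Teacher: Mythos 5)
Your proof is correct and complete. Note that the paper itself does not prove this proposition: it is quoted verbatim as Proposition 26 of Konieczny--Kupsa--Kwietniak \cite{MR4544150}, and the authors explicitly decline even to recall the definition of a sofic shift, so there is no in-paper argument to compare against. Your labeled-graph presentation on the vertex set $\Z/p\Z$, with a $0$-edge $r\to r+1$ always present and a $1$-edge present exactly when $x(r)=1$, does exactly what is needed: the phase bookkeeping correctly identifies the bi-infinite label sequences with $\bigcup_{k}\{y: y\leq\sigma^k x\}$, which is the hereditary closure of the (finite, hence closed) orbit; presentability by a finite labeled graph is soficity; and strong connectedness via the all-zero $p$-cycle gives irreducibility of the language (for any admissible $u,v$ one inserts $0^{(s'-t)\bmod p}$), which for subshifts is equivalent to transitivity. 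The closedness check for the hereditary closure and the degenerate cases $x=\mathbf{0}$, $x=\mathbf{1}$ are handled properly. This matches the spirit of the source: the original proof in \cite{MR4544150} likewise presents the hereditary closure of a periodic orbit by an explicit irreducible finite automaton, and the adjustment mentioned in Remark~\ref{sofic1} (replacing $0\leq y\leq\sigma^k x$ by $w\leq y\leq x$ for periodic $w\leq x$) is obtained from your construction by simply deleting the $0$-edge at those $r$ with $w(r)=1$.
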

\begin{remark}\label{sofic1}
We skip here the definition of a sofic shift as it is quite technical and this notion serves here as a tool only. Namely, in any sofic transitive shift the ergodic measures are entropy dense (more general results are known, see~\cite{MR2109476,MR1291239}). The following modification of Proposition~\ref{sofic} holds. Let $w, x\in\{0,1\}^\Z$ be periodic, such that $w\leq x$.  Then $[w,x]$ is a transitive sofic shift. The proof is a straightforward adjustment of the proof of Proposition~26 in~\cite{MR4544150}.
\end{remark}
\begin{proposition}[Corollary 20 in~\cite{MR4544150}]\label{z:1}
Let $(X_K)_{K\geq 1}\subseteq \{0,1\}^\Z$ be a sequence of transitive sofic shifts. If $X\subseteq\{0,1\}^\Z$ is a subshift such that $\underline{d}^H(X_K,X)\to 0$ then ergodic measures are entropy-dense in $\mathcal{P}(X)$.
\end{proposition}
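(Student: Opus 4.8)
The plan is to run the argument through the Ornstein $\bar d$-pseudometric on invariant measures, which I denote $\bar\rho$ to avoid any clash with the density notation $\overline{d}$ and the premetric $\underline{d}$ already in use: for $\mu,\nu\in\mathcal{P}(\{0,1\}^\Z)$ set $\bar\rho(\mu,\nu)=\inf_\lambda\lambda(\{(x,y):x(0)\neq y(0)\})$, the infimum over joinings $\lambda$ of $\mu$ and $\nu$. I will use three classical facts: $\bar\rho$ is jointly convex and satisfies the triangle inequality; $\bar\rho(\nu_n,\mu)\to0$ implies $\nu_n\to\mu$ weakly; and, by the Ornstein/Fano-type inequality (over the binary alphabet $|h(\mu)-h(\nu)|$ is bounded by the binary entropy of $\bar\rho(\mu,\nu)$), $\bar\rho$-close measures have close entropy. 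Consequently it suffices to prove the single approximation statement: \emph{for every $\mu\in\mathcal{P}(X)$ and $\varepsilon>0$ there is an ergodic $\nu\in\mathcal{P}^e(X)$ with $\bar\rho(\mu,\nu)<\varepsilon$}; weak convergence and entropy convergence of the approximants then follow automatically as $\varepsilon\to0$.

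First I would carry out a \emph{forward transport} from $X$ into the approximant $X_K$, where $K$ is chosen so that $\underline{d}^H(X_K,X)<\varepsilon$. For an ergodic measure $m$ on $X$, a generic point $x$ of $m$ can, by $\sup_{x\in X}\inf_{y\in X_K}\underline{d}(x,y)<\varepsilon$, be matched to some $y\in X_K$; passing to a subsequence that simultaneously realizes a small disagreement density and along which the empirical measures of $x$, of $y$, and of the pair converge, one obtains $m_K\in\mathcal{P}(X_K)$ and a joining witnessing $\bar\rho(m,m_K)\le\varepsilon$. A \emph{general} $\mu\in\mathcal{P}(X)$ is handled by applying this to each ergodic component, selecting the matched measures and joinings measurably, and reassembling by integration: setting $\mu_K=\int m_{K,\omega}\,dP(\omega)$ and $\lambda=\int\lambda_\omega\,dP(\omega)$, joint convexity of $\bar\rho$ yields $\mu_K\in\mathcal{P}(X_K)$ with $\bar\rho(\mu,\mu_K)\le\varepsilon$. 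Since $X_K$ is a transitive sofic shift, Remark~\ref{sofic1} supplies an ergodic $\nu_K\in\mathcal{P}^e(X_K)$ with $\nu_K\to\mu_K$ weakly and $h(\nu_K)\to h(\mu_K)$; I fix one whose weak distance and entropy gap to $\mu_K$ are at most $\varepsilon$ (this is where the non-ergodic "gluing" is absorbed, entirely inside $X_K$).

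The crux is the \emph{backward transport} of $\nu_K$ to $X$ with ergodicity preserved. Picking a generic point $y'$ of the ergodic $\nu_K$ and matching it to $x'\in X$ with $\underline{d}(y',x')<\varepsilon$ produces, along a realizing subsequence, a measure $\rho\in\mathcal{P}(X)$ and a joining $\lambda'$ of $\rho$ and $\nu_K$ with $\lambda'(\{x(0)\neq y(0)\})<\varepsilon$, i.e.\ $\bar\rho(\rho,\nu_K)<\varepsilon$; but $\rho$ need not be ergodic. To upgrade it, I disintegrate $\rho=\int\rho_\omega\,dP(\omega)$ over its ergodic decomposition and disintegrate $\lambda'$ accordingly. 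Because $\nu_K$ is ergodic, hence an extreme point of $\mathcal{P}(X_K)$, and the averaged second marginals recover $\nu_K$, the second marginal of $P$-a.e.\ piece equals $\nu_K$; each piece is therefore a joining of $\rho_\omega$ with $\nu_K$, whence $\int\bar\rho(\rho_\omega,\nu_K)\,dP(\omega)<\varepsilon$. By Markov's inequality some ergodic component $\nu:=\rho_\omega\in\mathcal{P}^e(X)$ satisfies $\bar\rho(\nu,\nu_K)<\sqrt{\varepsilon}$. Chaining the triangle inequality for $\bar\rho$ on the two matching steps with the weak-and-entropy closeness of the sofic step gives $\nu\to\mu$ weakly and $h(\nu)\to h(\mu)$, which is the asserted entropy density; the closedness needed to keep all limit measures supported on $X$ is built into the matchings, since the transported points lie in $X$.

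I expect the transfer of ergodicity across the approximation to be the main obstacle: entropy density is only available \emph{inside} each $X_K$, and neither the matched measure on $X$ nor the forward image on $X_K$ is ergodic a priori. The extreme-point disintegration above is exactly what dissolves this, turning a $\bar\rho$-bound against an ergodic target into a $\bar\rho$-bound for one of its own ergodic components. The remaining delicate points are routine but require care: the matchings rest on $\liminf$-densities and on the existence of generic points, so the relevant empirical measures must be extracted along a common subsequence, and the measurable selection of near-optimal joinings in the forward step must be justified (e.g.\ via the Arsenin--Kunugui selection theorem already invoked earlier in the paper).
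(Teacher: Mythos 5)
This proposition is one the paper does not prove at all: it is imported verbatim as Corollary~20 of~\cite{MR4544150}, so there is no internal proof to compare against. Your argument is in substance a reconstruction of the proof from that reference: transport measures between $X$ and $X_K$ along matched (quasi-)generic points using the $\bar{d}$-pseudometric on measures, control entropy via the Fano-type inequality, use entropy-density of ergodic measures inside the transitive sofic $X_K$ (the paper's Remark~\ref{sofic1}), and recover ergodicity on the way back by disintegrating the joining with the ergodic $\nu_K$ over the ergodic decomposition, where extremality of $\nu_K$ forces a.e.\ fibre to be a joining of an ergodic component of $\rho$ with $\nu_K$. That backward step is correct and is exactly the right tool; the forward step for non-ergodic $\mu$ does require the measurable selection of near-optimal joinings over the ergodic components, which you flag; it is available since the admissible set of pairs (joining, second marginal on $X_K$, disagreement at most $\varepsilon$) is closed with compact sections, so a Borel selection exists.

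One framing claim, however, is false as stated and should be struck: it does \emph{not} suffice to find ergodic $\nu$ with $\bar{\rho}(\mu,\nu)<\varepsilon$, because ergodic measures are in general not $\bar{\rho}$-dense in $\mathcal{P}(X)$ even under the hypotheses of the proposition. For instance, in the full shift (a transitive sofic shift, so one may take $X_K=X$), the measure $\mu=\tfrac12(\delta_{\ldots000\ldots}+\delta_{\ldots111\ldots})$ satisfies $\bar{\rho}(\mu,\nu)\geq\tfrac12$ for \emph{every} ergodic $\nu$: any joining of an ergodic $\nu$ with $\mu$ splits over the two fixed points with both conditional first marginals equal to $\nu$, giving disagreement density $\tfrac12\nu(x(0)=1)+\tfrac12\nu(x(0)=0)=\tfrac12$. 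Fortunately your executed argument never uses this reduction: your final chain combines $\bar{\rho}$-bounds (hence weak and entropy closeness) on the two matching steps with weak-plus-entropy closeness on the middle, sofic step, and that is precisely what entropy-density requires. So the proof goes through once the opening ``single approximation statement'' is replaced by the weaker, correct target of simultaneous weak and entropy approximation.
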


\begin{proof}[Proof of Theorem~\ref{dominik}]
Since $\mathcal{P}(X_\eta)=\mathcal{P}(X_{\eta'})$ by Theorem~\ref{wnioC}, we can assume without loss of generality that $\mathscr{B}$ is taut. Hence \( X_{\eta} = \overline{[ \eta^{*} , \eta ]} \) by Proposition~\ref{between_etas}, and in view of Remark~\ref{sofic1} and Proposition~\ref{z:1}, it suffices to prove that \( \underline{d}^{H}( [\underline{\eta}_{K} , \eta_{K} ] , \overline{[ \eta^{*} , \eta ]} ) \to 0 \). To do so, we show that
\[ \underline{d}^{H}( [\underline{\eta}_{K} , \eta_{K} ] , \overline{[ \eta^{*} , \eta ]} ) \leq \overline{d}( \underline{\eta}_{K} , \eta^{*} ) + \underline{d}( \eta , \eta_{K} ) , \]
where the right hand side tends to zero by the regularity of $\eta^*$ (cf.~\eqref{nowe}) and the Davenport-Erd\H{o}s theorem (i.e.~\eqref{DE}). Fix now \(K\geq 1\). We claim that for \( \underline{\eta}_{K} \leq x \leq \eta_{K} \) there exists \( y \in [ \eta^{*} , \eta ] \) with
\begin{equation}\label{nowee} 
\underline{d}( x , y ) \leq \overline{d}( \underline{\eta}_{K} , \eta^{*} ) + \underline{d}( \eta , \eta_{K} ) . 
\end{equation} 
Indeed, set \( y := N( \eta^{*} , \eta , x ) = \eta^{*} + x( \eta - \eta^{*} ) \). Then \( y(n) \neq x(n) \) implies that \( \eta^{*}( n ) = \eta( n ) \) and \( \underline{\eta}_{K}( n ) \neq \eta_{K}( n ) \) (recall that \( \underline{\eta}_{K} \leq \eta^{*} \leq \eta \leq \eta_{K} \)). Thus, for every \( n \in \Z \) with \( y(n) \neq x(n) \) we have either that \( \underline{\eta}_{K}( n ) \neq \eta^{*}( n ) \) or that \( \eta( n ) \neq \eta_{K}( n ) \), i.e.,
\[
\{n\in \mathbb{N} : x(n)\neq y(n)\}\subseteq \{n\in\mathbb{N} : \underline{\eta}_K(n)\neq \eta^*(n)\}\cup\{n\in\mathbb{N} : \eta(n)\neq \eta_K(n)\}.
\]
This yields~\eqref{nowee}. To finish the proof, note that for every \( x \in  [\underline{\eta}_{K} , \eta_{K} ] \) there exists \( m \in \Z \) such that \( \underline{\eta}_{K} \leq \sigma^{m} x \leq \eta_{K} \). By the above construction, for every \( x \in  [\underline{\eta}_{K} , \eta_{K} ] \) there exists therefore \( y \in [ \eta^{*} , \eta ] \) with
\( \underline{d}( x , y ) \leq \underline{d}( \underline{\eta}_{K} , \eta^{*} ) + \underline{d}( \eta , \eta_{K} ) \). In addition, we have \( \overline{[ \eta^{*} , \eta ]} \subseteq  [\underline{\eta}_{K} , \eta_{K} ] \).
\end{proof}

\subsection*{Acknowledgements}
We are indebted to Stanis\l{}aw Kasjan and Mariusz Lema\'{n}czyk for helpful discussions, in particular  on category theory notation. Research of the first two authors is supported by Narodowe Centrum Nauki grant UMO-2019/33/B/ST1/00364. Research of the third author is funded by the Deutsche Forschungsgemeinschaft (DFG, German Research Foundation) -- Projektnummer 454053022.


\bigskip
\footnotesize
\noindent
Aurelia Dymek, Joanna Ku\l aga-Przymus, Daniel Sell\\
\textsc{Faculty of Mathematics and Computer Science}\\
\textsc{Nicolaus Copernicus University}\\ 
\textsc{Chopina 12/18, 87-100 Toru\'{n}, Poland}\par\nopagebreak
\noindent
\texttt{\{aurbart,joasiak,dsell\}@mat.umk.pl}

\end{document}